\newcommand{\sour}{\mathop{\boldsymbol s}}
\newcommand{\skel}[1]{^{(#1)}}
\newcommand{\Iso}{\mathop{\mathrm{Iso}}}
\newcommand{\ran}{\mathop{\boldsymbol r}\nolimits}
\newcommand{\inv}{^{-1}}
\newcommand{\p}{\varphi}
\newcommand{\wh}{\widehat}
\newtheorem{Thm}{Theorem}[section]
\newtheorem{Prop}[Thm]{Proposition}
\newtheorem{Lemma}[Thm]{Lemma}
{\theoremstyle{definition}
\newtheorem{Def}[Thm]{Definition}}
{\theoremstyle{remark}
\newtheorem{Rmk}[Thm]{Remark}}
\newtheorem{Cor}[Thm]{Corollary}
{\theoremstyle{remark}
}
{\theoremstyle{remark}
\newtheorem{Example}[Thm]{Example}}
\theoremstyle{remark}
\theoremstyle{remark}
\theoremstyle{remark}
\theoremstyle{remark}
\newtheorem*{Claim*}{Claim}}
\newtheorem*{thmA}{Theorem~A}
\newtheorem*{thmB}{Theorem~B}
\numberwithin{equation}{section}
\title{On von Neumann regularity of ample groupoid algebras}
\author{Benjamin Steinberg\and Daniel W.~van Wyk}
\address[B.~Steinberg]{%
    Department of Mathematics\\
    City College of New York\\
    Convent Avenue at 138th Street\\
    New York, New York 10031\\
    USA}
\email{bsteinberg@ccny.cuny.edu}
\address[D. W.~van Wyk]{%
    Department of Mathematics \\ Fairfield University\\ Fairfield, CT
06824 \\ USA, \and Department of Mathematics and Applied Mathematics\\ University of
the Free State \\ Park West, Bloemfontein, 9301\\  South Africa.}
\email{dvanwyk@fairfield.edu}
\thanks{The first author was supported by a Simons Foundation Collaboration Grant, award number 849561, and the Australian Research Council Grant DP230103184.}
\date{\today}
\keywords{von Neumann regular rings, Leavitt path algebras, ample groupoids, groupoid algebras}
\subjclass[2020]{20M25, 16S88, 22A22, 20M18}
\begin{document}

\begin{abstract}
We completely characterize when the algebra of an ample groupoid with coefficients in an arbitrary unital ring is von Neumann regular and, more generally, when the algebra of a graded ample groupoid is graded von Neumann regular.  Our main application is to resolve the question, open since 1970, of when the algebra of an inverse semigroup is von Neumann regular.   As applications we recover known results on regularity and graded regularity of Leavitt path algebras, and prove a number of new results, in particular concerning graded regularity of algebras of Deaconu-Renault groupoids and Nekrashevych-Exel-Pardo algebras of self-similar groups.
\end{abstract}
\maketitle

\section{Introduction}

The notion of a (von Neumann) regular ring was invented by von Neumann in the 1930s in his study of what are now called von Neumann algebras. A ring (or more generally a semigroup) is regular if for every $a$, there is a $b$ with $aba=a$.   It was later observed that regular rings are precisely the rings for which every module is flat, and so from the point of view of homological algebra they are a natural generalization of semisimple rings.

A famous result of Connell~\cite{connell} says that a group ring $RG$, over a unital ring $R$, is regular if and only if $R$ is regular, $G$ is locally finite and the order of every finite subgroup of $G$ is invertible in $R$.  In particular, this says that regularity is a rare feature for groups rings.  Leavitt path algebras~\cite{LeavittBook} are also rarely regular: they are regular if and only if the (directed) graph defining the Leavitt path algebra is acyclic~\cite{AbramRangaswamy10}.  However, every Leavitt path algebra over a field is graded von Neumann regular by a result of Hazrat~\cite{Hazrat14}.  A graded ring $R$ is graded regular if the semigroup of homogeneous elements is regular.  This has an important impact on the graded $K$-theory of Leavitt path algebras~\cite{HazratGraded}.

Okni\'nski tried to generalize Connell's results to inverse semigroup algebras (see his book~\cite{okninski}).  He was able to prove that if $K$ is a field of characteristic $0$ and $S$ is an inverse semigroup, then $KS$ is regular if and only if $S$ is locally finite.  In positive characteristic he was able to give necessary conditions: $S$ is periodic, each maximal subgroup of $S$ is locally finite and the characteristic of $K$ cannot divide the order of any element of a maximal subgroup.  It had been proven earlier by Weissglass~\cite{Weissglass} that if $R$ is regular, $S$ is locally finite and the order of every finite subgroup of $S$ is invertible in $R$, then $RS$ is regular.  But being periodic with locally finite maximal subgroups is a strictly weaker condition than being locally finite.  The reason that Okni\'nski was able to obtain a complete characterization in characteristic $0$ is that he could embed $\mathbb CS$ into the Banach algebra $\ell_1(S)$ and use some analysis.

Group algebras, inverse semigroup algebras and Leavitt path algebras are all special cases of algebras of ample groupoids~\cite{mygroupoidalgebra}.  It seems natural then to investigate regularity of ample groupoid algebras.  A first result in this direction was obtained in~\cite{AHL}, where it was show that if $K$ is a field and $\mathscr G$ is an ample groupoid, then a necessary condition for the groupoid algebra $K\mathscr G$ to be regular is that each isotropy group of $\mathscr G$ is locally finite and the order of no element of any isotropy group is divisible by the characteristic of $K$.   The proof is a reduction to Connell's result.

Our main result is a complete characterization of regularity of $R\mathscr G$ over any base unital ring $R$.  Following the Bourbaki tradition, we say a space is quasi-compact if every open covering has a finite subcover and reserve ``compact'' to mean quasi-compact and Hausdorff.

\begin{thmA}
Let $\mathscr G$ be an ample groupoid and $R$ a unital ring.  Then $R\mathscr G$ is von Neumann regular if and only if:
\begin{enumerate}
  \item $R$ is von Neumann regular;
  \item $\mathscr G$ is a directed union of quasi-compact open subgroupoids;
  \item the order of each finite subgroup of an isotropy group of $\mathscr G$ is invertible in $R$.
\end{enumerate}
\end{thmA}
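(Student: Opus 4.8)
The plan is to establish necessity of (1)--(3) and sufficiency separately, after a common reduction. Since von Neumann regularity is an elementwise property it passes to corners and to directed unions, and as $R\mathscr{G}$ is the directed union of its corners $1_U R\mathscr{G}\,1_U\cong R(\mathscr{G}|_U)$ over the compact open sets $U\subseteq\mathscr{G}^{(0)}$, we get that $R\mathscr{G}$ is regular exactly when every $R(\mathscr{G}|_U)$ is. Each of (1)--(3) passes between $\mathscr{G}$ and the family $\{\mathscr{G}|_U\}$: for (1) and (3) this is clear, while for (2) one uses that a quasi-compact open subgroupoid of some $\mathscr{G}|_U$ is one of $\mathscr{G}$, and conversely that the quasi-compact open subgroupoids of the various $\mathscr{G}|_U$ form a directed family exhausting $\mathscr{G}$. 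So it suffices to treat the case where $\mathscr{G}^{(0)}$ is compact, in which $R\mathscr{G}$ is unital. The necessity of (1) and (3) is then immediate by evaluating quasi-inverse relations at units: for $r\in R$, a quasi-inverse $b$ of $r\cdot 1_{\mathscr{G}^{(0)}}$ satisfies $r\,b(x)\,r=r$ at any unit $x$, so (1) holds; and given a finite subgroup $H\le\mathscr{G}_x^x$, finiteness of $H$ lets one choose compact open bisections $B_h\ni h$ ($h\in H$) with $B_hB_{h'}=B_{hh'}$ and $B_h\subseteq\mathscr{G}^{(0)}$ for $h$ the identity, so that $e:=\sum_{h\in H}1_{B_h}$ satisfies $e^2=|H|\,e$; evaluating a relation $ebe=e$ at the unit $1_x$ --- using that $1_x\notin B_h$ for non-identity $h$ because $B_h$ is a bisection, which also handles the non-Hausdorff case --- yields $|H|\cdot c=1$ with $c:=\sum_{h\in H}b(h)\in R$, giving (3).

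The necessity of (2) is where the main difficulty lies, and I would argue it contrapositively. If $\mathscr{G}$ is not a directed union of quasi-compact open subgroupoids then, after the reduction, $\mathscr{G}$ is not quasi-compact and contains a compact open bisection $D$ lying in no quasi-compact open subgroupoid. Analysing the subgroupoid generated by $D$, and after technical reductions (shrinking $\mathscr{G}^{(0)}$ so that $\bigcap_n\ran(D^n)$ has empty interior, passing to an aperiodic piece in the homeomorphism case so that the sheets $D^n$ become pairwise disjoint, and reducing any non-cyclic case to a cyclic one by restricting to a suitable closed invariant set), one may assume $D$ has ``infinite type'': $D\colon V\to W$ with $V\subseteq\mathscr{G}^{(0)}$ compact open, $W\subseteq V$, the powers $D^n$ pairwise disjoint and nonempty, and either $W\subsetneq V$ or $D$ aperiodic. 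I then expect $a:=1_V-1_D$, viewed in the corner $A:=R(\mathscr{G}|_V)$, to be a two-sided nonzerodivisor --- from $ax=0$ one gets $x=1_{D^n}x$ for all $n$, and a support/accumulation analysis (using that $\supp x$ lies in finitely many compact open bisections) forces $x=0$ --- and to fail to be a unit, since a right inverse $b$ would have to satisfy $b=\sum_{j<n}1_{D^j}+1_{D^n}b$ for every $n$, forcing $b$ to meet infinitely many of the pairwise disjoint sheets $D^j$ and so contradicting that $\supp b$ lies in finitely many bisections. Since a nonzerodivisor in a von Neumann regular ring which is not a unit cannot be von Neumann regular, this gives that $R\mathscr{G}$ is not regular. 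The technically delicate parts --- the case reductions for $D$ and the non-Hausdorff bookkeeping throughout --- are the step I expect to be hardest.

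For sufficiency, assume (1)--(3). By (2) and compactness of $\mathscr{G}^{(0)}$ one may write $\mathscr{G}=\bigcup_i\mathscr{H}_i$ as a directed union of quasi-compact open subgroupoids with $\mathscr{H}_i^{(0)}=\mathscr{G}^{(0)}$, so $R\mathscr{G}=\varinjlim_i R\mathscr{H}_i$; as a directed union of von Neumann regular rings is von Neumann regular and each $\mathscr{H}_i$ again satisfies (1)--(3), it is enough to treat a quasi-compact $\mathscr{G}$. Covering such a $\mathscr{G}$ by finitely many, say $m$, compact open bisections shows that every source fibre --- hence every orbit and every isotropy group --- has at most $m$ elements; stratifying $\mathscr{G}^{(0)}$ by orbit size and source-fibre size gives a finite partition into locally closed invariant pieces on each of which the source map is a covering, so that there $\mathscr{G}$ is Morita equivalent to a locally trivial bundle, over a totally disconnected base, of group rings $RG$ with $G$ a finite subgroup of an isotropy group of $\mathscr{G}$. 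This yields a finite filtration of $R\mathscr{G}$ by ideals whose subquotients are, up to Morita equivalence, direct limits of finite products of such $RG$; by Connell's theorem together with (1) and (3) each $RG$ is von Neumann regular, hence so is each subquotient. Finally one invokes the fact that if $I$ is an ideal of a ring $B$ with $I$ and $B/I$ von Neumann regular as rings, then $B$ is von Neumann regular: for $a\in B$ pick $b$ with $c:=a-aba\in I$, then $d\in I$ with $cdc=c$, and set $x:=b+(1-ba)d(1-ab)$ (computed in a unitization), so that $axa=aba+cd(a-aba)=aba+cdc=aba+c=a$. Applying this inductively along the filtration shows $R\mathscr{G}$ is von Neumann regular, completing the proof.
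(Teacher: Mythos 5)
The fatal gap is in your contrapositive argument for the necessity of condition (2). You claim that if $\mathscr G$ (with compact unit space) is not a directed union of quasi-compact open subgroupoids, then it ``contains a compact open bisection $D$ lying in no quasi-compact open subgroupoid,'' and your entire argument then analyses the dynamics of that single bisection. No such $D$ need exist. Take $\mathscr G=G$ to be a finitely generated infinite torsion group (e.g.\ the Grigorchuk group) viewed as an ample groupoid over a one-point unit space: every compact open bisection is a singleton $\{g\}$, which generates the finite cyclic subgroup $\langle g\rangle$ and hence lies in a quasi-compact (indeed finite) open subgroupoid; nevertheless $G$ is not locally finite, so $\mathscr G$ is not approximately quasi-compact and $RG$ is not regular by Connell's theorem. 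Your ``infinite type'' element $1_V-1_D$ only detects obstructions generated by a single partial homeomorphism (essentially $\mathbb Z$-like dynamics, as for cycles in Leavitt path algebras); the hard case --- exactly the hard case in Connell's theorem --- is a finite family of bisections generating an infinite inverse subsemigroup of $\Gamma_c(\mathscr G)$ even though each member generates a finite one. The paper handles this by adapting Connell's original idea: for a finitely generated inverse submonoid $S\leq\Gamma_c(\mathscr G)$ one shows that the left ideal generated by $\{1_U-1_{UU\inv}\mid U\in S\}$ is finitely generated, writes it as $R\mathscr Ge$ for an idempotent $e$, and uses $f=1-e$ together with the fibrewise summation map $\ran_*$ to bound the fibres of $\bigcup S$ uniformly and, as a byproduct, to obtain invertibility of the isotropy orders. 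Without an argument of this kind, necessity of (2) does not go through.

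Two further issues are repairable but real. For necessity of (3) you assert without proof that a finite subgroup $H\leq\mathscr G^x_x$ lifts to compact open bisections with $B_hB_{h'}=B_{hh'}$ exactly; this is true, but it requires producing a compact open neighbourhood of $x$ that is invariant under the partial homeomorphisms induced by the chosen bisections and on which all the group relations hold locally --- a genuine lemma, not an immediate consequence of finiteness (the paper instead extracts (3) from the Connell-style computation above). For sufficiency, your stratification by orbit and fibre size is essentially the paper's argument, but the claim that each stratum is Morita equivalent to a \emph{locally trivial} bundle of group rings is false in general: an ample group bundle with fibres of constant order need not be locally trivial (the isomorphism type of the fibre can vary, and in the non-Hausdorff case distinct isotropy elements need not be separated by disjoint bisections). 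The paper circumvents this by passing to the locally finite inverse semigroup $\Gamma_c$ of the group bundle, embedding its maximal subgroups into products of isotropy groups, and invoking Weissglass's theorem for locally finite inverse semigroup algebras.
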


 For principal, second countable Hausdorff groupoids the second condition is equivalent to approximate finiteness in the sense of~\cite{GPS}, hence we call such groupoids approximately quasi-compact.

As a consequence, we resolve the problem of characterizing regular inverse semigroup algebras, dating back to the 1970 paper of Weissglass~\cite{Weissglass}.

\begin{thmB}
Let $S$ be an inverse semigroup and $R$ a unital ring.  Then $RS$ is von Neumann regular if and only if:
\begin{enumerate}
  \item $R$ is von Neumann regular;
  \item $S$ is locally finite;
  \item the order of each finite subgroup of $S$ is invertible in $R$.
\end{enumerate}
\end{thmB}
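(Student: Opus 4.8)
The plan is to deduce Theorem~B from Theorem~A by passing to the universal (Paterson) groupoid $\mathscr G(S)$ of the inverse semigroup $S$. Recall from \cite{mygroupoidalgebra} that $RS\cong R\mathscr G(S)$, that the unit space of $\mathscr G(S)$ is the space $\widehat{E(S)}$ of characters of the semilattice $E(S)$ of idempotents, that the germ of $s\in S$ at a character $\xi$ with $\xi(s^*s)=1$ is written $[s,\xi]$, and that $D_s=\{[s,\xi]:\xi(s^*s)=1\}$ is a compact open bisection with $D_sD_t=D_{st}$ and $D_s^{-1}=D_{s^*}$. So it suffices to show that conditions~(1)--(3) of Theorem~A, applied to $\mathscr G(S)$, are equivalent to conditions~(1)--(3) of Theorem~B. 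Condition~(1) is literally the same in both. I would handle condition~(3) next and leave condition~(2), which is the crux, for last. (The reverse implication of Theorem~B is in any case the 1970 theorem of Weissglass~\cite{Weissglass}, so one could also quote that and only prove the forward implication via Theorem~A.)

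For condition~(3): each maximal subgroup $G_e=\{s\in S:ss^*=e=s^*s\}$, $e\in E(S)$, is exactly the isotropy group of $\mathscr G(S)$ at the principal character $\delta_e$ attached to the filter $\mathord{\uparrow}e$. Indeed $s\cdot\delta_e=\delta_{ses^*}$, so $s$ fixes $\delta_e$ iff $ses^*=e$, and then $se\in G_e$ has the same germ at $\delta_e$ as $s$, giving an isomorphism $G_e\to\Iso_{\delta_e}(\mathscr G(S))$. Since every finite subgroup of $S$ lies in the maximal subgroup at its identity idempotent, every finite subgroup of $S$ is, up to isomorphism, a finite subgroup of an isotropy group of $\mathscr G(S)$; thus condition~(3) of Theorem~A for $\mathscr G(S)$ implies condition~(3) of Theorem~B. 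For the converse, given a finite subgroup $H$ of an arbitrary $\Iso_\xi(\mathscr G(S))$, I would pick germ representatives $s_1,\dots,s_n\in S$ for the elements of $H$, set $T=\langle s_1,\dots,s_n\rangle$, and use restriction of characters along $E(T)\hookrightarrow E(S)$ to get a functor $\mathscr G(S)|_T\to\mathscr G(T)$ under which $H$ embeds into an isotropy group of $\mathscr G(T)$ (if $[s_i,\xi|_{E(T)}]=[s_j,\xi|_{E(T)}]$ then already $[s_i,\xi]=[s_j,\xi]$). Once $S$, hence $T$, is known to be locally finite, $T$ is finite, $E(T)$ is a finite semilattice, every character of $E(T)$ is principal, and every isotropy group of $\mathscr G(T)$ is a maximal subgroup of $T\subseteq S$; so $H$ embeds in a finite subgroup of $S$. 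Hence, modulo the equivalence of the two conditions~(2), the conditions~(3) match up, and similarly the easy half of condition~(2) together with this gives the reverse implication through Theorem~A.

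Everything therefore reduces to the assertion: \emph{$\mathscr G(S)$ is a directed union of quasi-compact open subgroupoids if and only if $S$ is locally finite.} The ``if'' direction is straightforward: writing $S$ as a directed union of finite inverse subsemigroups $S_i$, the sets $\bigcup_{s\in S_i}D_s$ are finite unions of compact open bisections, hence quasi-compact open subgroupoids, and their union is $\mathscr G(S)$. For ``only if'' — the main obstacle — suppose $\mathscr G(S)=\bigcup_\alpha\mathscr K_\alpha$ with the $\mathscr K_\alpha$ directed quasi-compact open subgroupoids, and let $T=\langle s_1,\dots,s_n\rangle$ be a finitely generated inverse subsemigroup. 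Since $D_{s_1}\cup\dots\cup D_{s_n}$ is quasi-compact, directedness produces a $\mathscr K_\beta$ containing it, and then, $\mathscr K_\beta$ being a subgroupoid, it contains $D_t$ for every $t\in T$; thus the open subgroupoid $\mathscr G(S)|_T=\bigcup_{t\in T}D_t$ lies inside the quasi-compact $\mathscr K_\beta$. As $t\mapsto D_t$ is injective, the task is to show this forces $T$ to be finite, and this is the heart of the matter. By the structure theory of inverse semigroups $T$ is finite iff $E(T)$ is finite and every maximal subgroup of $T$ is finite, so I would treat these two failure modes separately: an infinite maximal subgroup of $T$ should contribute, at a suitable limit character, infinitely many distinct germs all forced into $\mathscr K_\beta$, contradicting quasi-compactness, and an infinite $E(T)$ should likewise produce an infinite non-quasi-compact piece. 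The guiding example is the bicyclic monoid $B=\langle p,q\mid pq=1\rangle$: here $\mathscr G(B)$ is the pair groupoid on a countable discrete set together with a copy of $\mathbb Z$ as isotropy at the unique non-principal character, and one checks directly that no quasi-compact open subgroupoid contains a nontrivial generator of that $\mathbb Z$ — which is precisely why $RB$ is never von Neumann regular. I expect the principal difficulty to be making this limit-character analysis precise in general: extracting, from an infinite finitely generated $T$, the offending character and the infinite family of germs that witness the failure of quasi-compactness. The remaining implications — the easy half of~(2), and conditions~(1) and~(3) as above — are routine.
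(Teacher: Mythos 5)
Your overall strategy coincides with the paper's: pass to the universal groupoid $\mathscr G_S$, use $RS\cong R\mathscr G_S$, apply Theorem~A, quote Weissglass for sufficiency, and match up the three conditions, with the identification of maximal subgroups with isotropy at principal characters handling condition~(3). Up to and including the reduction to the claim ``$\mathscr G_S$ approximately quasi-compact $\Rightarrow$ $S$ locally finite,'' and indeed up to your observation that a finitely generated $T=\langle s_1,\dots,s_n\rangle$ has all of $\bigcup_{t\in T}D_t$ trapped inside a single quasi-compact open subgroupoid $\mathscr K_\beta$, everything you write is correct and is essentially the paper's route (Corollary~\ref{c:gpd.germs}).

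However, the step you yourself flag as unfinished --- ``extracting, from an infinite finitely generated $T$, the offending character and the infinite family of germs that witness the failure of quasi-compactness'' --- is a genuine gap, and it is precisely the content of the paper's Lemma~\ref{l:uniform.bound} and Proposition~\ref{p:uniformly.bdd.vs.local.finite}. The paper does not perform any limit-character analysis. Instead: (i) a quasi-compact subspace of an ample groupoid is covered by finitely many compact open bisections, each of which meets each $\sour$-fiber in at most one point, so any open subgroupoid contained in it is \emph{uniformly bounded}, i.e.\ $|\sour\inv(x)\cap\mathscr H|\leq N$ for all $x$ (Lemma~\ref{l:uniform.bound}); (ii) a uniformly bounded groupoid has all orbits of size at most $N$, hence each restriction $\mathscr G|_{\mathcal O_x}$ is a finite groupoid with at most $N^2$ arrows, so $\Gamma_c(\mathscr H)$ embeds in the product $\prod_{\mathcal O}\Gamma_c(\mathscr H|_{\mathcal O})$ of finite inverse semigroups of order at most $2^{N^2}$, and Birkhoff's theorem (Theorem~\ref{t:Birkhoff}) forces $\Gamma_c(\mathscr H)$, hence $T$, to be locally finite and therefore finite. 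Your split into the two failure modes ($E(T)$ infinite versus an infinite maximal subgroup) is not wrong, but pursuing it directly is harder than you anticipate: for the $E(T)$-infinite case one really needs the \emph{uniform} bound on fibers across all characters (not just an infinite fiber at one character), since, e.g., the free monogenic inverse semigroup is combinatorial and the infinitude is spread across infinitely many principal characters. So the missing idea is the uniform-boundedness/Birkhoff mechanism; with it, your setup (all $D_t$ inside one quasi-compact $\mathscr K_\beta$) closes immediately, but as written the proof is incomplete at its central point.
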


The sufficiency of these conditions goes back to Weissglass~\cite{Weissglass}.

We also consider graded regularity of groupoid algebras for ample groupoids equipped with a locally constant cocycle to a group.  We prove the algebra is graded regular if and only if the homogeneous component of the identity is regular.  Since the homogeneous component of the identity is the algebra of the clopen subgroupoid of elements mapping to the identity under the cocycle, our results for regularity of groupoid algebras apply.  From this we easily recover Hazrat's result on graded regularity of Leavitt path algebras~\cite{Hazrat14}.  We extend these results to algebras of Deaconu-Renault groupoids under mild hypotheses.  These include algebras of row finite higher rank graphs.  We consider further graded regularity of partial group skew products, Leavitt path algebras of labeled graphs and Nekrashevych-Exel-Pardo algebras of self-similar graphs.

The paper is organized as follows.  Section~2 is devoted to preliminaries concerning inverse semigroups, ample groupoids and regular rings.  Then Section~3 proves our main results on regularity of ample groupoid algebras.  Section~4 addresses the question of graded regularity of the algebra of an ample groupoid equipped with a locally constant cocycle.  The final section, applies the main results to study regularity and graded regularity for the algebras of a number of well-studied families of ample groupoids.  We added an appendix giving a groupoid-free proof of Theorem~B for specialists in semigroup theory, although the proof is merely a retranslation of the groupoid proof.

\subsection*{Acknowledgments}
We thank Gilles de Castro for showing Example~\ref{e:diff_algs} to us.

\section{Preliminaries}

We collect here some preliminaries on inverse semigroups, ample groupoids and von Neumann regular rings.

\subsection{Inverse semigroups and groupoids}

The reader is referred to~\cite{Lawson} for details on the theory of inverse semigroups.
A \emph{semigroup} is a nonempty set together with an associative binary operation. An \emph{inverse semigroup} is a semigroup $S$ such that for every $s\in S$ there is a unique $s^*\in S$ such that $s^*ss^*=s^*$ and $ss^*s=s$. Notice that $ss^*, s^*s$ are idempotents for all $s\in S$. The set $E(S)$ of idempotents  of $S$ is a commutative subsemigroup and hence a meet semilattice with the partial order given by $e\leq f$ if and only if $ef=e$, and the meet by $e\wedge f =ef$. The order extends to $S$ by defining $s\leq t$ if $s=te$ for some $e\in E(S)$. If $e\in E(S)$, then $G_e=\{s\in S\mid s^*s=e=ss^*\}$ is a group with identity $e$ known as the \emph{maximal subgroup} of $S$ at $e$.  

A topological groupoid is a groupoid $\mathscr G$ endowed with a locally compact topology such that composition and inversion are continuous. All groupoids are assumed nonempty.  We do not assume that $\mathscr G$ is Hausdorff in general, but we do assume that the unit space $\mathscr G^{(0)} = \{\gamma\gamma^{-1}\mid \gamma\in \mathscr G\}$ is Hausdorff with the relative topology inherited from  $\mathscr G$. Since we do not assume that  $\mathscr G$ is Hausdorff, locally compact is defined as every point in $\mathscr G$ having a compact neighborhood, where compact means Hausdorff and open covers have finite subcovers.  Following the Bourbaki tradition, we say that a space $X$ is \emph{quasi-compact} if every open covering of $X$ has a finite subcovering (but with no assumption of being Hausdorff).  Note that a finite union of quasi-compact sets is quasi-compact and a closed subset of a quasi-compact set is quasi-compact.

An \emph{\'etale groupoid} is a topological groupoid  $\mathscr G$ such that the range map $\ran\colon \mathscr G\to \mathscr G^{(0)}$ (or equivalently, the source map $\sour$) is a local homeomorphism.  A \emph{bisection} of an \'etale groupoid $\mathscr G$ is an open subset $U\subseteq \mathscr G$ such that  the restrictions $\ran|_{U}$ and $\sour|_{U}$ are homeomorphisms onto an open set in $\mathscr G\skel 0$. In particular, bisections are always Hausdorff subspaces.   An \'etale groupoid $\mathscr G$ is an \emph{ample groupoid} if  $\mathscr{G}^{(0)}$ is a locally compact Hausdorff space with a basis of compact open sets.

If $x\in \mathscr G\skel 0$, then the \emph{isotropy group} at $x$ is \[\mathscr G^x_x =\{\gamma\in \mathscr G\mid \sour(\gamma)=x=\ran(\gamma)\}.\]  Then $\Iso(\mathscr G)=\bigcup_{x\in \mathscr G\skel 0} \mathscr G^x_x$ is a closed subgroupoid called the \emph{isotropy subgroupoid} (or bundle).  A groupoid consisting of only isotropy is called a \emph{group bundle}.  A groupoid with trivial isotropy groups is called \emph{principal}.

If $X\subseteq \mathscr G\skel 0$, then $\mathscr G|_X$ is the subgroupoid of all arrows $\gamma$ with $\sour(\gamma),\ran(\gamma)\in X$, i.e., $\mathscr G|_X=\sour\inv (X)\cap \ran\inv (X)$.  In particular, if $X$ is open (respectively, closed) in $\mathscr G\skel 0$, then $\mathscr G|_X$ is open (respectively, closed) in $\mathscr G$.

The \emph{orbit} of $x\in \mathscr G\skel 0$ is the set $\mathcal O_x$ consisting of all $y\in \mathscr G\skel 0$ such that there is an arrow $\gamma\colon x\to y$.  Note that $\mathcal O_x= \ran(\sour\inv(x))=\sour(\ran\inv(x))$.  The set of all orbits of $\mathscr G$ is denoted $\mathscr G\skel 0/\mathscr G$.   A subset $X$ of $\mathscr G\skel 0$ is \emph{invariant} if it is a union of orbits.

Suppose that $\mathscr G$ is an ample groupoid. Then the set $\Gamma_c(\mathscr G)$ of compact open bisections form a basis for its topology. Moreover, the set $\Gamma_c(\mathscr G)$ is an inverse semigroup with multiplication and inversion defined by
\begin{align*}
UV =\{\gamma\eta\in \mathscr G\mid \gamma\in U, \eta\in V\}, \text{ and }
U^{-1} = \{\gamma^{-1}\mid \gamma \in U\},
\end{align*}
respectively. The semilattice of idempotents of  $\Gamma_c(\mathscr G)$ is given by the set of compact open subsets of $\mathscr G^{(0)}$.
Notice that if $\mathscr G$ is an ample group bundle, then $\Gamma_c(\mathscr G)$ is a Clifford semigroup, as $\sour(U)=\ran(U)$ for all $U\in \Gamma_c(\mathscr G)$.

\subsection{Algebras of inverse semigroups and ample groupoids}
Fix a unital ring $R$ (not necessarily commutative).  If $S$ is an inverse semigroup, then the inverse semigroup ring $RS$ is the free left $R$-module with basis $S$ and with multiplication \[\sum_{s\in S}c_ss\cdot \sum_{t\in S} d_tt= \sum_{s,t\in S}c_sd_tst.\]

Fix an ample groupoid $\mathscr{G}$.   Then the \emph{Steinberg algebra} of $\mathscr G$ over $R$ is the left $R$-module
\[ R\mathscr{G} = \mathrm{span}_R\{1_U\mid U\in \Gamma_c(\mathscr G)\}\subseteq R^{\mathscr G},\]
 with multiplication given by convolution:
\begin{equation*}
	f\ast g (\gamma) = \sum_{\sour(\gamma)=\sour(\alpha)} f(\gamma\alpha^{-1})g(\alpha)=\sum_{\alpha\beta=\gamma}f(\alpha)g(\beta).
\end{equation*}
Notice that $1_U*1_V = 1_{UV}$ for all $U,V\in \Gamma_c(\mathscr G)$.   Thus $R\mathscr G$ is a quotient of the semigroup algebra $R\Gamma_c(\mathscr G)$.  
Note that $R\mathscr G$ is unital if and only if $\mathscr G\skel 0$ is compact.  See~\cite{mygroupoidalgebra} for details (where the assumption that $R$ is commutative is not necessary).

If $U$ is an open invariant subset of $\mathscr G\skel 0$ and $X=\mathscr G\skel 0\setminus U$ is the complementary closed invariant subspace, then $I=R\mathscr G|_U$ is an ideal of $R\mathscr G$ and $R\mathscr G/I\cong R\mathscr G|_X$ via restriction of functions by~\cite[Pages 1596--1597]{gcrccr} (note an omission in the argument is resolved in~\cite[Corollar~4.4]{MillerSteinberg}), a fact we shall use in the sequel without comment.

\subsection{Groupoid of germs and the universal groupoid of an inverse semigroup}
We briefly review the construction of the universal groupoid of an inverse semigroup; see~\cite{mygroupoidalgebra} or~\cite{Paterson} for a detailed discussion.

Fix an inverse semigroup $S$ and let $E$ denote the semilattice of idempotents of $S$ (since $S$ will be fixed throughout this subsection). Let $X$ be a locally compact totally disconnected space.   We denote by $I_X$ the inverse semigroup of all partial homeomorphisms of $X$ with compact open domain and range. An action of $S$  on $X$ is a homomorphism $\p\colon S\to I_X$ such that if $X_e$ denotes the domain of an idempotent $e$, then $\bigcup_{e\in E}X_e=X$; this last condition says that the action is non-degenerate.  Notice that $\p_s\colon X_{s^*s}\to X_{ss^*}$ for each $s\in S$.  We put $sx=\p_s(x)$ for the action when no confusion can arise.

If $S$ acts on a locally compact totally disconnected space $X$, the \emph{groupoid of germs} $\mathscr G$ of the action is described as follows.   The groupoid $\mathscr G$ consists of all equivalence classes of pairs $(s,x)$ with $x\in X_{s^*s}$ where $(s,x)\sim (t,y)$ if $x=y$ and there exists $u\in S$ with $x\in X_{u^*u}$ and $u\leq s,t$ (equivalently, there exists $e\in E$ with $x\in X_e$ and $se=te$) .   The class of $(s,x)$  is denoted by $[s,x]$.  A basis of compact open bisections for the topology on $\mathscr G$ is given by the sets
\[(s,U)=\{[s,x]\mid x\in U\}\] where $U\subseteq X_{s^*s}$ is compact open.
The set of composable pairs is
$\mathscr{G}_S\skel 2=\{([s,x],[t,y])\mid x=ty, \}$ with multiplication given by
\[[s,ty][t,y] =[st,y], \]
and inversion given by
\[[s,x]^{-1} = [s^*,sx].\]
The unit space $\mathscr{G}_S^{(0)} = \{[e,x]\mid x\in X_e\}$ is identified with $X$ via the homeomorphism $[e,x]\mapsto x$.
From this point of view, the domain and range maps are given by $\sour([s,x])=x$ and $\ran([s,x])=sx$.

There is a homomorphism $\psi\colon S\to \Gamma_c(\mathscr G)$ given by $\psi(s) = (s,X_{s^*s})$ and $\mathscr G=\bigcup \psi(S)$.
The reader should consult~\cite{Exel,Paterson} for more details on groupoids of germs.

Of special importance is the action of $S$ on the spectrum $\widehat{E}$ of $E$.
 Let $\widehat E$ denote the space of nonzero (semi)characters $x\colon E\to \{0,1\}$, endowed with the topology of pointwise convergence. Then $\widehat E$ is a locally compact, totally disconnected Hausdorff space.
If $e, e_1,\ldots e_n\in E$ and $x\in \widehat E$, then
$$D_{e, e_1,\ldots e_n} = \{y\in \widehat E \mid y(e)=1,\,\, y(e_i) = 0,\, 1\leq i\leq n\}$$
is a compact open subset of $\widehat E$, and such sets form a basis $\mathscr{B}$ of compact open sets for $\widehat E$. Basis elements of the form
\[D_e = \{x\in\widehat E \mid x(e)=1 \},\]
(for which $n=0$) play a particularly important role in the construction of the universal groupoid. They allow us to define an action of $S$ on $\widehat E$ as follows.  For each $s\in S$, the map $\alpha_s\colon D_{s^*s}\to D_{ss^*}$ defined by
\[\alpha_s(x)(e) = x(s^*es)\]
is a homeomorphism, and the map $s\mapsto \alpha_s$ defines an action $\alpha\colon S\to I_{\widehat E}$ of $S$ on $\widehat{E}$ (see~\cite[Proposition 4.3.2]{Paterson}\footnote{Note that Paterson works with \emph{right} actions in~\cite{Paterson}.}).

The universal groupoid $\mathscr{G}_S$ associated with $S$ is the groupoid of germs $\mathscr G_S$ of the action of $S$ on $\widehat E$.
For $s\in S$, define $\psi(s) = (s,D_{s^*s})$. Then $\psi$ is a homomorphism of $S$ into $\Gamma_c(\mathscr{G}_S)$,~\cite[Theorem 3.3.2]{Paterson}\footnote{In fact, $\psi$ is a restriction of Paterson's  $\psi_X$ in~\cite[Theorem 3.3.2]{Paterson}}. In fact, $\psi$ is injective, and extends to an isomorphism $\wh{\psi}\colon RS\to R\mathscr{G}_S$; see~\cite[Theorem 6.3]{mygroupoidalgebra} where the assumption that $R$ is commutative is unnecessary.

If $e\in E$ and $x_e$ is the principal character given by \[x_e(f) = \begin{cases}1, & \text{if}\ f\geq e\\ 0, & \text{else.}\end{cases}\] then the isotropy group of $\mathscr G_S$ at $e$ is isomorphic to the maximal subgroup $G_e$.  The principal characters form a dense subset of $\wh{E}$.  See~\cite{Paterson,mygroupoidalgebra} for more details.

\subsection{Regular rings}
A ring $A$ is \emph{von Neumann regular}, or regular for short, if for all $a\in A$, there exists $b\in A$ with $aba=a$.  We do not require rings to be unital, however the coefficient rings of our group, semigroup and groupoid algebras shall always be assumed unital.   The following facts are quite standard; see~\cite[Chapter~1]{Goodearlreg}.

\begin{Prop}\label{p:vnr}
The following statements all hold.
\begin{enumerate}
  \item An arbitrary direct product of regular rings is regular.
  \item If $A$ is a ring and $I$ is an ideal of $A$, then $A$ is regular if and only if $I$ and $A/I$ are regular.
  \item A direct limit of regular rings is regular.
  \item A corner in a regular ring is regular.
  \item A matrix algebra over a regular ring is regular.
  \item The center of a regular ring is regular.
  \item Every semisimple Artinian ring is regular.
  \item A ring $A$ is regular if and only if it has local units and each unitary $A$-module is flat.
\end{enumerate}
\end{Prop}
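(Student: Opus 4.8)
The plan is to reduce each item to standard ring theory, as collected in \cite[Chapter~1]{Goodearlreg}, supplying only the short arguments. Several items are immediate. For (1), if $A=\prod_iA_i$ and $a=(a_i)_i$, choose $b_i\in A_i$ with $a_ib_ia_i=a_i$ and set $b=(b_i)_i$. A homomorphic image of a regular ring is obviously regular, giving ``$A$ regular $\Rightarrow A/I$ regular''; and if $a\in I$ with $aba=a$ for some $b\in A$, then $bab\in I$ (as $I$ is an ideal) and $a(bab)a=a$, so $I$ is regular. For (3), every element and every finite system of equations already lives in some term $A_i$ of the directed system, so solving $x_iy_ix_i=x_i$ there and mapping forward proves regularity of the direct limit. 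For (4), given an idempotent $e$ and $eae\in eAe$, choose $b\in A$ with $(eae)b(eae)=eae$; then $ebe\in eAe$ and $(eae)(ebe)(eae)=e\,a\,e\,b\,e\,a\,e=eae$. Finally, (7) follows from (1), (5) and the Artin--Wedderburn description of a semisimple Artinian ring as a finite product of matrix rings over division rings, the latter being trivially regular.

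Items (5), (6) and the harder direction of (2) are standard (all in \cite[Chapter~1]{Goodearlreg}) but need slightly more care outside the unital setting. For (5), I would use that a ring is regular if and only if every finitely generated one-sided ideal has the form $eA$ for an idempotent $e$ --- two such idempotents being combined into one via $e_1+e_2-e_2e_1$ once one arranges $e_1e_2=0$ --- and then check that this property is inherited by $M_n(A)$. For (6), the point is that for a central $z$ the two-sided ideal $zA=Az$ is generated, as a right ideal, by an idempotent $e$; an idempotent generating the same subset as a left ideal and as a right ideal is central, so $A$ splits as a ring direct product with $z$ lying in the unital factor $eAe$, and one reduces to the unital case. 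For the nontrivial direction of (2), given $a\in A$ one first picks $b\in A$ with $a-aba\in I$ (using regularity of $A/I$) and then corrects the error term using regularity of $I$: pseudo-inverses and idempotents lift modulo a regular ideal, and the standard computation produces a pseudo-inverse of $a$ in $A$.

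The substantive item is (8). Over a unital ring this is the classical characterization of von Neumann regular rings by flatness of all modules \cite[Chapter~1]{Goodearlreg}: regularity makes every finitely generated one-sided ideal a direct summand, hence projective, hence flat, so every module is flat; the converse follows from the flat-module criterion ($M$ flat iff $M\otimes_A L\hookrightarrow M$ for all left ideals $L$), which applied to the cyclic modules $A/aA$ yields that each principal right ideal is a direct summand. To obtain the formulation with local units I would observe that a ring with local units is the directed union of its unital corners $eAe$ ($e=e^2$), that a unitary module is the direct limit of its restrictions along this system, and that flatness is both preserved and reflected along it; combined with (3), (4) and the unital case, this yields the equivalence. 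The ``$\Rightarrow$'' half of (8) also uses the standard (if slightly fiddly) fact that a regular ring automatically has local units, obtained by the inductive construction in which, to bring a new element $a$ into the domain of an idempotent $e$, one decomposes $a$ relative to $e$ (the element $a-ea-ae+eae$ and its companions make sense even with no unit present), handles the $eAe$-component inside that corner, picks pseudo-inverses of the off-corner components orthogonal to $e$, and enlarges $e$ by an orthogonal idempotent.

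I expect (8) to be where essentially all of the --- still modest --- work lies: reconciling the classical unital flat-modules theorem with the local-units formulation actually used in this paper, and carrying out the ``regular rings have local units'' step. Everything else is routine bookkeeping or a direct appeal to \cite[Chapter~1]{Goodearlreg}.
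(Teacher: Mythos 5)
The paper offers no proof of this proposition at all --- it simply records these facts as standard with a pointer to \cite[Chapter~1]{Goodearlreg} --- and your sketches are precisely the standard arguments from that source, correctly adapted to the non-unital setting where needed (the pseudo-inverse computations for (1)--(4) and (7), the idempotent-generation argument for (5), the central-idempotent splitting for (6), and the lifting-modulo-a-regular-ideal computation for (2)). Everything you write checks out, including your accurate observation that item (8) in its local-units formulation is the one place where a genuine (if routine) argument beyond a bare citation to Goodearl is required.
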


A semigroup is \emph{locally finite} if all its finitely generated subsemigroups are finite, or, equivalently, it is a direct limit of finite semigroups.  Note that any subsemigroup of a locally finite semigroup is locally finite.
The following is a special case of a classical theorem of Birkhoff in universal algebra, cf.~\cite[Theorem~10.16]{universalalgebra}.  We include a proof for the sake of completeness.

\begin{Thm}[Birkhoff]\label{t:Birkhoff}
Let $S$ be a subsemigroup of a direct product $\prod_{\alpha \in A} S_{\alpha}$ of finite semigroups of uniformly bounded order.  Then $S$ is locally finite.
\end{Thm}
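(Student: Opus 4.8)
The plan is to use the definition of local finiteness directly: it suffices to show that every finitely generated subsemigroup $T$ of $\prod_{\alpha\in A}S_\alpha$ is finite. Write $n=\sup_{\alpha\in A}|S_\alpha|<\infty$ (this is where the uniform bound enters), and suppose $T$ is generated by $x_1,\dots,x_k$. Let $F$ be the free semigroup on $a_1,\dots,a_k$, and let $\pi\colon F\twoheadrightarrow T$ be the surjective homomorphism with $\pi(a_i)=x_i$; then $T\cong F/\ker\pi$, so it is enough to bound the index of the congruence $\ker\pi$ on $F$.

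First I would, for each $\alpha\in A$, let $\rho_\alpha\colon F\to S_\alpha$ be the composite of $\pi$ with the coordinate projection $\pi_\alpha\colon\prod_{\beta}S_\beta\to S_\alpha$. Its image is the subsemigroup $\pi_\alpha(T)\subseteq S_\alpha$, of order at most $n$, so $\ker\rho_\alpha$ has index at most $n$. Because elements of the product are equal exactly when they agree in every coordinate, $\pi(u)=\pi(v)$ iff $\rho_\alpha(u)=\rho_\alpha(v)$ for all $\alpha$; that is, $\ker\pi=\bigcap_{\alpha\in A}\ker\rho_\alpha$.

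The crucial point---and the only step with real content---is that the family $\{\ker\rho_\alpha:\alpha\in A\}$ consists of only finitely many congruences. This follows from the general fact that a finitely generated semigroup admits only finitely many congruences of bounded index: up to isomorphism there are finitely many semigroups of order at most $n$ (there are at most $m^{m^2}$ binary operations on an $m$-element set), and a homomorphism out of $F$ is determined by the images of the finitely many generators $a_1,\dots,a_k$, so in total there are only finitely many homomorphisms from $F$ to semigroups of order at most $n$, hence only finitely many kernels. Choosing $\alpha_1,\dots,\alpha_r$ so that $\{\ker\rho_\alpha:\alpha\in A\}=\{\ker\rho_{\alpha_1},\dots,\ker\rho_{\alpha_r}\}$, we get $\ker\pi=\bigcap_{i=1}^r\ker\rho_{\alpha_i}$; the induced homomorphism $F/\ker\pi\to\prod_{i=1}^r F/\ker\rho_{\alpha_i}$ is then injective, and since each $F/\ker\rho_{\alpha_i}\cong\pi_{\alpha_i}(T)$ has order at most $n$, we conclude $|T|=|F/\ker\pi|\le n^r<\infty$.

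I expect no genuine obstacle here: the subdirect embedding into a finite product of quotients, the first isomorphism theorem for semigroups, and the crude count of multiplication tables are all routine, and they combine to give the bound. The one thing worth emphasizing in the write-up is that uniform boundedness of the $|S_\alpha|$ is essential and not a technicality---without it the conclusion fails, e.g.\ $\prod_{m\ge 2}\mathbb Z/m\mathbb Z$ contains the infinite monogenic subsemigroup generated by the element all of whose coordinates are $1$, so that product is not locally finite although every factor is a finite group. (If one prefers to avoid free semigroups, one can instead argue directly with the pointed semigroups $(\pi_\alpha(T),(\pi_\alpha(x_1),\dots,\pi_\alpha(x_k)))$, of which only finitely many isomorphism types occur; this is the same argument with slightly more bookkeeping to choose the isomorphisms compatibly with the generating tuples.)
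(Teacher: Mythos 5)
Your proof is correct, and it is the same subdirect-decomposition argument as the paper's at heart: both reduce to a finitely generated $T$, observe that the coordinate projections separate points, and then show that only finitely many \emph{distinct} kernels can arise among maps from a finitely generated semigroup into targets of order at most $n$, so that $T$ embeds in a finite product of finite semigroups. The difference is in how that key finiteness is implemented. You pass to the free semigroup $F$ and count congruences of bounded index via the finitely many multiplication tables on at most $n$ elements; this works, but it forces the bookkeeping you flag in your final parenthetical (kernels are only identified after choosing isomorphisms of the quotients compatibly with the generating tuples), and your sentence ``there are only finitely many homomorphisms from $F$ to semigroups of order at most $n$'' is literally false as stated (infinitely many codomain sets) even though the intended conclusion about kernels is right. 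The paper sidesteps this entirely with a Cayley-type trick: each $S_\alpha$ embeds in the single fixed finite monoid $T_{d+1}$ of self-maps of a $(d+1)$-element set, so one only needs to count homomorphisms from $S$ into this one target --- at most $|T_{d+1}|^{|X|}$ of them, since each is determined on the generating set $X$ --- and $S$ then embeds directly into a finite power of $T_{d+1}$ with no free semigroup and no isomorphism-type census. Your version is no less valid and arguably closer to the textbook statement of Birkhoff's theorem; the paper's is slightly cleaner to write down. Your closing counterexample $\prod_{m\ge 2}\mathbb Z/m\mathbb Z$ correctly shows the uniform bound cannot be dropped.
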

\begin{proof}
Without loss of generality, we may assume that $S$ is generated by a finite set $X$.  Suppose that $|S_{\alpha}|\leq d$ for all $\alpha\in A$.  Let $T_{d+1}$ be the (finite) monoid of all self-maps on a $(d+1)$-element set.  Let $S_{\alpha}^1$ be the monoid obtained by adding an identity to $S_{\alpha}$.  Then the action of $S_{\alpha}$ on $S_{\alpha}^1$ by left multiplication is faithful and so $S_{\alpha}$ embeds in $T_{d+1}$ as $|S_{\alpha}^1|\leq d+1$.  Hence combining the projections $\pi_{\alpha}\colon S\to S_{\alpha}$ with embeddings into $T_{d+1}$, we see that the homomorphisms from $S$ to $T_{d+1}$ separate points of $S$.  But since $X$ is finite and a homomorphism from $S$ to $T_{d+1}$ is determined by its restriction to $X$, we see that $S$ has at most $r=|T_{d+1}|^{|X|}$ many distinct homomorphisms to $T_{d+1}$.  Thus $S$ embeds in $T_{d+1}^r$ (via the product of these homomorphisms) and hence is finite.
\end{proof}

The following result is due to Connell~\cite{connell}.

\begin{Thm}[Connell]\label{t:connell}
Let $G$ be a group and $R$ a unital ring.  Then the group ring $RG$ is regular if and only if $R$ is regular, $G$ is locally finite and the order of each finite subgroup of $G$ is invertible in $R$.
\end{Thm}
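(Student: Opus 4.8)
The plan is to prove the two implications separately, reducing the harder direction to finitely generated groups. For sufficiency, assume $R$ is regular, $G$ is locally finite, and $|F|$ is invertible in $R$ for every finite subgroup $F\leq G$. Since $G$ is the directed union of its finite subgroups, $RG$ is the directed union of the subrings $RF$, so by Proposition~\ref{p:vnr}(3) it suffices to prove each $RF$ regular. Here I would run Maschke's argument over the general base $R$: the scaled element $|F|^{-1}\sum_{g\in F}g\otimes_R g^{-1}$ is a separability idempotent for $RF$ over $R$, so for every right $RF$-module $K$ and left $RF$-module $M$ the canonical epimorphism $K\otimes_R M\twoheadrightarrow K\otimes_{RF}M$ is split by a map natural in $K$. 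Since $R$ is regular, $M$ is flat over $R$, so $K\otimes_R M\to L\otimes_R M$ is injective for every inclusion $K\hookrightarrow L$ of right $RF$-modules; composing with the natural splitting forces $K\otimes_{RF}M\to L\otimes_{RF}M$ to be injective, whence $M$ is $RF$-flat and $RF$ is regular.

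For necessity, assume $RG$ is regular (we may assume $R\neq 0$). That $R$ is regular is immediate, as the augmentation $RG\to R$ is a surjective ring homomorphism and quotients of regular rings are regular by Proposition~\ref{p:vnr}(2). The engine for the other two conditions is the reduction: \emph{for every subgroup $H\leq G$, $RH$ is regular}. Indeed, writing $RG$ as the direct sum of the $R$-spans of the double cosets in $H\backslash G/H$ exhibits $RH$ (the span of the coset $H$) as an $(RH,RH)$-subbimodule direct summand, so there is an $(RH,RH)$-bimodule retraction $\pi\colon RG\to RH$; if $a\in RH$ and $aba=a$ with $b\in RG$, then applying $\pi$ and using bilinearity gives $a\pi(b)a=a$ with $\pi(b)\in RH$. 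Taking $H=\langle g\rangle$ shows $G$ is torsion: an element of infinite order would make $RH\cong R[t,t^{-1}]$ regular, but there $t-1$ is a non-unit that is not a zero divisor (compare leading and trailing coefficients), which cannot occur in a regular ring. For the orders, let $F\leq G$ be finite and $p$ a prime dividing $|F|$; by Cauchy there is $c\in G$ of order $p$, and $RC\cong R[t]/(t^p-1)$ with $C=\langle c\rangle$ is regular. If $p\cdot 1_R$ were a non-unit, then, $Z(R)$ being regular by Proposition~\ref{p:vnr}(6), there is a nonzero central $z_0\in R$ with $pz_0=0$, and $z_0(t-1)$ is then a nonzero central nilpotent of $RC$ (the interior binomial coefficients $\binom{p}{k}$ are divisible by $p$), which is impossible, since a regular ring has no nonzero central nilpotent. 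Hence every prime dividing $|F|$, and therefore $|F|$, is invertible in $R$.

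What remains, and is the heart of the matter, is local finiteness. By the subgroup reduction this is equivalent to proving that a finitely generated (hence torsion, by the above) group $G$ with $RG$ regular and $R\neq 0$ is finite, and I do not expect a quick route to this from the preliminaries collected so far. Connell's argument analyses the augmentation ideal $\omega$: it is generated as a right ideal by $\{x-1:x\in X\}$ for a finite generating set $X$, hence is a finitely generated right ideal, hence a direct summand $\omega=eRG$ with $RG/\omega\cong R$; a rank (Hattori--Stallings trace) computation on this finitely generated projective module then forces $G$ to be finite. Alternatively, one might hope to embed $G$ into a product of finite groups of uniformly bounded order and invoke Theorem~\ref{t:Birkhoff}, but extracting such an embedding from regularity of $RG$ is precisely the crux. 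Once local finiteness is established, the proof of necessity is complete.
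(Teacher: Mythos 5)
The paper never proves Theorem~\ref{t:connell} directly---it is quoted from Connell---but Proposition~\ref{p:main.necessity} is an explicit adaptation of Connell's proof to ample groupoids, and specializing it to a one-unit groupoid supplies exactly the step you are missing; that is the relevant comparison. Most of what you wrote is sound: the Maschke separability idempotent combined with flatness over the regular base $R$ and Proposition~\ref{p:vnr} gives sufficiency; the augmentation gives regularity of $R$; the double-coset (in fact $G=H\sqcup(G\setminus H)$ suffices) bimodule retraction shows $RH$ is regular for every subgroup $H$; the $R[t,t^{-1}]$ argument gives torsion; and the central-nilpotent argument correctly forces every prime dividing $|F|$ to be invertible.

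The genuine gap is the one you flag yourself: local finiteness is named but not proved, and the appeal to ``a rank (Hattori--Stallings trace) computation'' is exactly where the content lives. The missing step is short and needs no trace theory; it is what Proposition~\ref{p:main.necessity} carries out in groupoid language. With $G$ finitely generated and $\omega=eRG$ as you set it up, put $f=1-e$. Then $f\omega=(1-e)eRG=0$, so $f(g-1)=0$, i.e.\ $fg=f$ for every $g\in G$. Writing $f=\sum_{x\in G}f(x)x$ with finite support, the identity $fg=f$ says $f(xg^{-1})=f(x)$ for all $x,g$, so $f$ is a constant function on $G$; and $f\neq 0$ because $\varepsilon(f)=\varepsilon(1)-\varepsilon(e)=1$, as $e\in\omega=\ker\varepsilon$ and $R\neq 0$. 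A nonzero constant function with finite support forces $G$ to be finite, and then $1=\varepsilon(f)=|G|\cdot f(1)$ shows $|G|$ is invertible in $R$. Note that this last identity renders your separate central-nilpotent argument for the orders redundant (though it is correct and arguably more elementary): the single idempotent complement $f$ yields finiteness and invertibility simultaneously, which is precisely how the paper organizes Proposition~\ref{p:main.necessity} and the Appendix proof of Theorem~B.
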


Our main result extends Connell's theorem to ample groupoids.  




We shall need the following result going back to Weissglass~\cite{Weissglass}.  We sketch a proof for completeness.
\begin{Prop}[Weissglass]\label{p:inverse.semigroup.easy}
Let $R$ be a regular unital ring and let $S$ a locally finite inverse semigroup such that if a prime $p$ is the order of a cyclic subgroup of $S$, then $p$ is invertible in $R$.   Then $RS$ is regular.
\end{Prop}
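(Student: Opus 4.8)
The plan is to reduce to the case of a \emph{finite} inverse semigroup by a direct-limit argument, then pass to the universal groupoid and invoke Connell's theorem (Theorem~\ref{t:connell}).

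First I would observe that $S$ is the directed union of its finite inverse subsemigroups: the inverse subsemigroup generated by a finite set $X\subseteq S$ is the subsemigroup generated by $X\cup X^{*}$, hence finitely generated, hence finite by local finiteness of $S$; and the inverse subsemigroup generated by two finite inverse subsemigroups is again finitely generated and so finite, so the family is directed with union $S$. Since $RS$ is then the directed union $\varinjlim RT$ of the subalgebras $RT$ over the finite inverse subsemigroups $T\leq S$ (with the inclusion maps), and each such $T$ inherits the hypothesis on orders of cyclic subgroups, Proposition~\ref{p:vnr}(3) reduces the problem to showing that $RT$ is regular for every \emph{finite} inverse semigroup $T$.

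So suppose $S$ itself is finite, hence $E=E(S)$ is finite. Then every nonzero character of $E$ is principal: $x^{-1}(1)$ is a nonempty, meet-closed up-set of the finite semilattice $E$, hence equals the principal filter generated by $e:=\bigwedge x^{-1}(1)$, so $x=x_e$. Thus $\widehat E=\{x_e\mid e\in E\}$ is finite and the universal groupoid $\mathscr G_S$ is a finite discrete groupoid. Using the isomorphism $RS\cong R\mathscr G_S$ (\cite[Theorem~6.3]{mygroupoidalgebra}, recalled in the construction of the universal groupoid above), it suffices to show $R\mathscr G_S$ is regular. Since $\mathscr G_S$ is finite and discrete there are no arrows between distinct orbits, so $R\mathscr G_S\cong\prod_{\mathcal O}R(\mathscr G_S|_{\mathcal O})$, a finite product over the orbits, and by Proposition~\ref{p:vnr}(1) it is enough to treat one orbit. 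For an orbit $\mathcal O$ of size $n$, picking a base point $x_e\in\mathcal O$ and an arrow $x_e\to y$ for each $y\in\mathcal O$ identifies $R(\mathscr G_S|_{\mathcal O})$ with $M_n(RH)$, where $H$ is the isotropy group of $\mathscr G_S$ at $x_e$; by the computation recalled above this is the maximal subgroup $G_e$ of $S$, a finite group. Finally $M_n(RG_e)$ is regular: $R$ is regular by hypothesis, and every subgroup $H\leq G_e$ has order a product of primes, each of which—by Cauchy's theorem—is the order of a cyclic subgroup of $S$ and hence a unit in $R$, so $|H|$ is a product of units, hence a unit; therefore $RG_e$ is regular by Theorem~\ref{t:connell} and $M_n(RG_e)$ is regular by Proposition~\ref{p:vnr}(5).

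The step I expect to require the most care is the structural one in the finite case: identifying a finite groupoid algebra with a finite direct product of matrix algebras over its isotropy groups. This is classical, but it is where the actual content lies; via the translation $RS\cong R\mathscr G_S$ it is nothing but the Munn description of the algebra of a finite inverse semigroup through the contracted algebras of its principal factors (Brandt semigroups), and one could equally run the whole argument that way. Everything else is bookkeeping with Proposition~\ref{p:vnr} together with a direct appeal to Connell's theorem; the one point worth verifying carefully in the first reduction is that the finite inverse subsemigroups really do form a directed family cofinal among the finitely generated subsemigroups of $S$.
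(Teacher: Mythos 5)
Your proof is correct and follows essentially the same route as the paper: reduce to finite $S$ by a direct-limit argument over the finite inverse subsemigroups, decompose $RS$ as a finite direct product of matrix algebras over the group rings of the maximal subgroups, and finish with Cauchy's theorem and Connell's theorem. The only difference is that the paper simply cites this structural decomposition for finite inverse semigroups (\cite[Theorem~4.6]{mobius2}), whereas you rederive it by noting that the universal groupoid of a finite inverse semigroup is finite and discrete (since every character of a finite semilattice is principal) and then reading off the orbit-by-orbit matrix-algebra description; both are fine.
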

\begin{proof}
Since the class of regular rings is closed under direct limits by Proposition~\ref{p:vnr}, we may assume that $S$ is a finite inverse semigroup.   Then it is well known that $RS$ is isomorphic to a finite direct product of matrix algebras over the group rings of its maximal subgroups (cf.~\cite[Theorem~4.6]{mobius2} where the assumption that $R$ is commutative is superfluous).  Our assumptions imply that the order of any maximal subgroup $G$ of $S$ is invertible in $R$ by Cauchy's theorem,  and so $RG$ is regular by Theorem~\ref{t:connell}.  Thus $RS$ is regular by Proposition~\ref{p:vnr}.
\end{proof}

\section{von Neumann regularity of Steinberg algebras}

We begin by establishing the necessity of the conditions in Theorem~A.

\begin{Prop}\label{p:R.is.regular}
Suppose that $R\mathscr G$ is regular with $\mathscr G$ ample and $R$ a unital ring.  Then $R$ is regular.
\end{Prop}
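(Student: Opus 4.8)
The plan is to find inside $R\mathscr G$ a corner ring isomorphic to $R$, and then invoke Proposition~\ref{p:vnr}(4), which says a corner in a regular ring is regular. Since $\mathscr G$ is ample, $\mathscr G\skel 0$ has a basis of compact open sets, so we may pick any nonempty compact open set $U\subseteq \mathscr G\skel 0$; then $1_U$ is an idempotent of $R\mathscr G$ (because $1_U * 1_U = 1_{U\cdot U} = 1_U$, as $U$ is a unit set). The corner $1_U * R\mathscr G * 1_U$ is regular by Proposition~\ref{p:vnr}(4), so it suffices to exhibit a unital subring of this corner isomorphic to $R$ and argue that this suffices — but in fact I would instead directly identify $1_U * R\mathscr G * 1_U$ well enough, or better, cut down to an even smaller corner.

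The key step is to choose $U$ small enough that $\mathscr G|_U$ is trivial, i.e. equals $U$ itself as a groupoid. This is possible: fix any $x\in\mathscr G\skel 0$, and using that $\sour$ is a local homeomorphism (étale) together with the Hausdorff basis of compact open sets on the unit space, choose a compact open bisection $V$ containing $x$; then $W = V \cap \mathscr G\skel 0$ is a compact open neighborhood of $x$ in the unit space. One checks that for a sufficiently small compact open $U \ni x$ with $U \subseteq W$, the restriction $\mathscr G|_U$ can be arranged to be just $U$ — concretely, because any nonunit arrow through $x$ would have to lie in some bisection meeting $\mathscr G\skel 0$ only in the unit space, and one can shrink $U$ to avoid the source/range of finitely many such bisections; the cleanest route is to note that if $U$ is a compact open subset of $\mathscr G\skel 0$ then $R(\mathscr G|_U) = 1_U * R\mathscr G * 1_U$, so it is enough to find $U$ with $\mathscr G|_U = U$. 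Once such $U$ is found, $R(\mathscr G|_U) \cong R(U) \cong R$ as rings (the groupoid $U$ being a discrete-as-a-groupoid unit space with one object-worth of structure over each point, but as a single compact open "unit" piece its Steinberg algebra over $R$ with the convolution product collapses to $R$ when $U$ is a single "point's worth" — more carefully, take $U$ to be a basic compact open set small enough that it is also a bisection and contains no nontrivial isotropy, then $R(\mathscr G|_U)$ with convolution is $R^U$ componentwise, and picking $U$ with a single chosen point one isolates a copy of $R$).

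Rather than belabor the point-isolation, the slick argument I would actually write: pick a nonempty compact open $U \subseteq \mathscr G\skel 0$. Then $1_U$ is an idempotent and $e\mapsto 1_U * 1_{\{x\}}$-type reasoning fails in general (points need not be open), so instead use that $R \hookrightarrow R\mathscr G$, $r \mapsto r\cdot 1_U$, is a ring homomorphism onto a subring $S$ of the corner $1_U * R\mathscr G * 1_U$ with $S$ containing $1_U$, and observe that $1_U$ acts as a unit only on $S$, not on the whole corner — so this does not immediately give a corner. The genuinely correct step is therefore the groupoid-restriction one: the main obstacle is precisely to produce a nonempty compact open $U\subseteq\mathscr G\skel 0$ with $\mathscr G|_U = U$, equivalently with $R(\mathscr G|_U)\cong R$. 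I expect this to follow from étaleness: given $x$, choose a compact open bisection $B$ with $x\in B$ so that $B\cap \mathscr G\skel 0 =: U_0$ is compact open; if $\mathscr G|_{U_0}\ne U_0$, iterate/shrink using that $\mathscr G|_{U_0}$ is an ample groupoid with compact unit space and apply a Baire-type or a finite-bisection-covering argument to find a smaller $U$ killing all nonunit arrows. With such $U$ in hand, $R \cong R(\mathscr G|_U) = 1_U * R\mathscr G *1_U$ is a corner in the regular ring $R\mathscr G$, hence regular by Proposition~\ref{p:vnr}(4), completing the proof.
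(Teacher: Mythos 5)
Your proposal has a genuine gap at its central step: you cannot, in general, find a nonempty compact open $U\subseteq \mathscr G\skel 0$ with $\mathscr G|_U=U$. Shrinking $U$ can eliminate arrows between \emph{distinct} units, but it can never eliminate isotropy: if $\gamma\in\mathscr G^x_x$ is a nontrivial isotropy element, then $\gamma\in\mathscr G|_U$ for \emph{every} open $U\ni x$. The simplest counterexample is $\mathscr G=G$ a nontrivial group viewed as an ample groupoid with one-point unit space (so $R\mathscr G=RG$); here the only choice is $U=\mathscr G\skel 0$ and $\mathscr G|_U=G\neq U$, yet the proposition must still be proved (it is part of Connell's theorem in that case). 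Group bundles such as $X\times(\mathbb Z/2)$ over the Cantor set show the same failure with infinite unit space. No Baire-type or covering argument rescues this. There is a second, smaller problem: even when $\mathscr G|_U=U$ does hold, the corner $1_U*R\mathscr G*1_U$ is $C_c(U,R)$, which is isomorphic to $R$ only when $U$ is a single isolated point (otherwise $U$ splits into two nonempty clopen pieces and the corner splits as a product); isolated points need not exist. This second issue is repairable --- evaluation at a point of $U$ is a surjective ring homomorphism $C_c(U,R)\to R$, and quotients of regular rings are regular --- but the first issue is fatal to the strategy as stated.

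The paper avoids all of this with a short direct computation: given $r\in R$, set $f=r1_U$ for any nonempty compact open $U\subseteq\mathscr G\skel 0$, take $g$ with $f*g*f=f$, and evaluate at a unit $x\in U$. The point is that although evaluation at $x$ is not a ring homomorphism on all of $R\mathscr G$, the specific element $f$ is supported on units, so any factorization $x=\alpha\beta\gamma$ with $\alpha,\gamma\in U$ forces $\alpha=\gamma=x$ and hence $\beta=x$; this yields $r=f(x)=(f*g*f)(x)=rg(x)r$, so $g(x)$ is a quasi-inverse for $r$ in $R$. If you want to salvage your corner-based framework, you would still need an argument of exactly this evaluation type to pass from regularity of $1_U*R\mathscr G*1_U=R\mathscr G|_U$ down to $R$, since $R$ is in general neither a corner nor an ideal of $R\mathscr G|_U$ in any obvious way.
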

\begin{proof}
Let $r\in R$ and let $U$ be any nonempty compact open subset of $\mathscr G\skel 0$.  Let $f=r1_U$.  Then there exists $g\in R\mathscr G$ with $fgf=f$.  Let $x\in U$.  Then $r=f(x)=fgf(x)=rg(x)r$, as $x=\alpha\beta\gamma$ with $\alpha,\gamma\in U$ implies $\alpha=x=\gamma$, and hence $\beta=x$.  We conclude that $R$ is regular.
\end{proof}

Let us say that an ample groupoid $\mathscr G$ is \emph{uniformly bounded} if there is an integer $M\geq 1$ with $|\ran^{-1}(x)|\leq M$ for all $x\in \mathscr G\skel 0$ or, equivalently, $|\sour^{-1}(x)|\leq M$ for all $x\in \mathscr G\skel 0$.  Note that any open or closed subgroupoid of a uniformly bounded ample groupoid is uniformly bounded, and so, in particular, a uniformly bounded groupoid has finite isotropy groups.  Let us say that $\mathscr G$ is \emph{approximately uniformly bounded} if it is a directed union of uniformly bounded open subgroupoids.  An open or closed subgroupoid of an approximately uniformly bounded ample groupoid is again approximately uniformly bounded, and hence the isotropy groups of such a groupoid are locally finite. Let us say that $\mathscr G$ is \emph{approximately quasi-compact} if $\mathscr G$ is a directed union of quasi-compact open subgroupoids.   It is not difficult to verify that a principal second countable Hausdorff ample groupoid is approximately quasi-compact if and only if it is an approximately finite groupoid in the sense of~\cite[Section~11.5]{NylandOrtega19} (i.e., an approximately finite \'etale equivalence relation in the sense of~\cite{GPS}).
  Note that if $\mathscr G$ is a quasi-compact ample groupoid, then $\mathscr G\skel 0$ is compact being the image of $\mathscr G$ under $\ran$.

The following proposition will be used without comment.

\begin{Prop}\label{p:almost.compact.closed.under.dl}
Let $\mathscr G$ be an ample groupoid and suppose that it is a directed union of open approximately quasi-compact subgroupoids.  Then $\mathscr G$ is approximately quasi-compact.
\end{Prop}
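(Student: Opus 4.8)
The plan is to produce an explicit directed family of quasi-compact open subgroupoids of $\mathscr{G}$ whose union is $\mathscr{G}$. First I would write $\mathscr{G}=\bigcup_{i\in I}\mathscr{H}_i$ as a directed union of open, approximately quasi-compact subgroupoids, and for each $i\in I$ write $\mathscr{H}_i=\bigcup_{j\in J_i}\mathscr{K}_{i,j}$ as a directed union of quasi-compact open subgroupoids of $\mathscr{H}_i$. Since $\mathscr{H}_i$ is open in $\mathscr{G}$ and $\mathscr{K}_{i,j}$ is open in $\mathscr{H}_i$, each $\mathscr{K}_{i,j}$ is an open subgroupoid of $\mathscr{G}$; and quasi-compactness depends only on the subspace topology, so each $\mathscr{K}_{i,j}$ is quasi-compact in $\mathscr{G}$ as well. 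Set $\mathcal{F}=\{\mathscr{K}_{i,j}\mid i\in I,\ j\in J_i\}$; then $\bigcup\mathcal{F}=\bigcup_{i\in I}\mathscr{H}_i=\mathscr{G}$, so it remains only to check that $\mathcal{F}$ is directed by inclusion.

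The key step — and the only place where anything beyond bookkeeping is needed — is the following absorption claim: whenever $\mathscr{H}_i\subseteq\mathscr{H}_k$, every $\mathscr{K}_{i,j}$ is contained in some $\mathscr{K}_{k,a}$. To see this, note that $\mathscr{K}_{i,j}\subseteq\mathscr{H}_k=\bigcup_{b\in J_k}\mathscr{K}_{k,b}$, so the sets $\mathscr{K}_{k,b}$ (which are open in $\mathscr{H}_k$, hence restrict to an open cover of $\mathscr{K}_{i,j}$) cover the quasi-compact space $\mathscr{K}_{i,j}$; extracting a finite subcover $\mathscr{K}_{i,j}\subseteq\mathscr{K}_{k,b_1}\cup\cdots\cup\mathscr{K}_{k,b_n}$ and invoking directedness of $\{\mathscr{K}_{k,b}\}_{b\in J_k}$ to find $a\in J_k$ above $b_1,\dots,b_n$, we obtain $\mathscr{K}_{i,j}\subseteq\mathscr{K}_{k,a}$. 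I expect this to be the crux: quasi-compactness of the members of the inner systems is precisely what allows a member of one layer to be absorbed into a member of the $\mathscr{H}_k$-layer; without it one cannot pass from a finite subcover back to a single subgroupoid.

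Granting the claim, directedness of $\mathcal{F}$ is immediate. Given $\mathscr{K}_{i,j}$ and $\mathscr{K}_{i',j'}$, I would use directedness of $\{\mathscr{H}_i\}_{i\in I}$ to choose $k\in I$ with $\mathscr{H}_i,\mathscr{H}_{i'}\subseteq\mathscr{H}_k$; the claim yields $a,b\in J_k$ with $\mathscr{K}_{i,j}\subseteq\mathscr{K}_{k,a}$ and $\mathscr{K}_{i',j'}\subseteq\mathscr{K}_{k,b}$; and directedness of $\{\mathscr{K}_{k,c}\}_{c\in J_k}$ supplies $c\in J_k$ with $\mathscr{K}_{k,a},\mathscr{K}_{k,b}\subseteq\mathscr{K}_{k,c}\in\mathcal{F}$. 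Thus $\mathscr{G}$ is the directed union of the family $\mathcal{F}$ of quasi-compact open subgroupoids, i.e., $\mathscr{G}$ is approximately quasi-compact, as desired.
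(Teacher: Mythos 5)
Your proof is correct and follows essentially the same route as the paper's: both decompose $\mathscr G$ into the doubly-indexed family of inner quasi-compact open subgroupoids and verify directedness by using quasi-compactness to extract a finite subcover from the directed inner family of a common upper bound. The only cosmetic difference is that the paper absorbs the union $\mathscr K_{i,j}\cup\mathscr K_{i',j'}$ (itself quasi-compact) in one step, whereas you absorb each piece separately and then invoke directedness once more; the underlying argument is the same.
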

\begin{proof}
Suppose that $\mathscr G=\bigcup_{\alpha\in D}\mathscr G_{\alpha}$ with the $\mathscr G_{\alpha}$ a directed family of open approximately quasi-compact  subgroupoids.  Write $\mathscr G_{\alpha}=\bigcup_{\beta\in I_{\alpha}} \mathscr G_{\alpha,\beta}$ with the $\mathscr G_{\alpha,\beta}$ a directed family of open quasi-compact subgroupoids of $\mathscr G_{\alpha}$.  Then each $\mathscr G_{\alpha,\beta}$ is open and quasi-compact in $\mathscr G$ and $\mathscr G=\bigcup_{\alpha\in D}\bigcup_{\beta\in I_{\alpha}}\mathscr G_{\alpha,\beta}$.  It remains to show that this family of open quasi-compact subgroupoids is directed.  Given $\mathscr G_{\alpha,\beta}$ and $\mathscr G_{\alpha',\beta'}$, there exists $\alpha_0$ with $\mathscr G_{\alpha},\mathscr G_{\alpha'}\subseteq \mathscr G_{\alpha_0}$.  Since $\mathscr G_{\alpha,\beta}\cup \mathscr G_{\alpha',\beta'}$ is quasi-compact and $\mathscr G_{\alpha_0}=\bigcup_{\beta_0\in I_{\alpha_0}}\mathscr G_{\alpha_0,\beta_0}$, which is a directed union of open subgroupoids, we deduce that $\mathscr G_{\alpha,\beta},\mathscr G_{\alpha',\beta'}\subseteq \mathscr G_{\alpha_0,\beta_0}$ for some $\beta_0\in I_{\alpha_0}$.   This establishes directedness.
\end{proof}

Our next observation is trivial.

\begin{Lemma}\label{l:uniform.bound}
Let $\mathscr G$ be an ample groupoid and $\mathscr H$ an open subgroupoid. If $\mathscr H$ is contained in a quasi-compact subspace of $\mathscr G$,   then $\mathscr H$ is uniformly bounded.  In particular, quasi-compact ample groupoids are uniformly bounded.
\end{Lemma}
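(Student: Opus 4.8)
The plan is to exploit the fact that an ample groupoid has a basis of compact open bisections, together with the observation that a bisection meets every range fibre in at most one point. Concretely, since $\mathscr G$ is ample, the compact open bisections $\Gamma_c(\mathscr G)$ form a basis for the topology of $\mathscr G$, so for each $\gamma$ lying in the given quasi-compact subspace $K \supseteq \mathscr H$ I would choose a compact open bisection $V_\gamma \in \Gamma_c(\mathscr G)$ with $\gamma \in V_\gamma$. (Note these bisections need not be contained in $\mathscr H$ or in $K$; they are only used as basic open sets of $\mathscr G$.) The family $\{V_\gamma \mid \gamma \in K\}$ is then an open cover of $K$, and quasi-compactness of $K$ yields finitely many $V_1,\ldots,V_n$ with $K \subseteq V_1 \cup \cdots \cup V_n$, hence $\mathscr H \subseteq V_1 \cup \cdots \cup V_n$ as well.

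Next I would fix $x \in \mathscr H\skel 0$ and bound the fibre $\ran_{\mathscr H}^{-1}(x) = \ran^{-1}(x)\cap \mathscr H$. Since $\mathscr H \subseteq V_1\cup\cdots\cup V_n$, we have
\[
\ran^{-1}(x)\cap \mathscr H \ \subseteq\ \bigcup_{i=1}^{n}\bigl(\ran^{-1}(x)\cap V_i\bigr).
\]
Because each $V_i$ is a bisection, the restriction $\ran|_{V_i}$ is injective, so $|\ran^{-1}(x)\cap V_i|\leq 1$ for every $i$. Therefore $|\ran_{\mathscr H}^{-1}(x)|\leq n$ for all $x\in \mathscr H\skel 0$, which is precisely the statement that $\mathscr H$ is uniformly bounded, with $M = n$ as a uniform bound. (The argument with $\sour$ in place of $\ran$ is identical, which matches the equivalent formulation of uniform boundedness.)

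Finally, the ``in particular'' clause is the special case $\mathscr H = K = \mathscr G$: a quasi-compact ample groupoid is covered by finitely many of its compact open bisections, and the same fibre count shows it is uniformly bounded. There is essentially no obstacle here — the only point requiring a moment's care is that the covering bisections are taken from a basis of $\mathscr G$ (not required to sit inside $\mathscr H$), and that ``bisection'' forces at most one preimage under $\ran$ in each fibre; everything else is a one-line counting argument, consistent with the claim in the statement that the lemma is trivial.
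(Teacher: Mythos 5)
Your proof is correct and follows essentially the same route as the paper: cover the quasi-compact set by finitely many compact open bisections and use that each bisection meets every $\ran$-fibre at most once, giving the bound $n$. No gaps.
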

\begin{proof}
Let $C$ be a quasi-compact subspace of $\mathscr G$ containing $\mathscr H$.  We may cover $C$ by finitely many compact open bisections $U_1\cup\cdots \cup U_n$ of $\mathscr G$.  Then  we have $|\ran\inv(x)\cap \mathscr H|\leq n$ for all $x\in \mathscr H\skel 0$ as $|\ran\inv(x)\cap U_i|\leq 1$ for $i=1,\ldots, n$ and $\mathscr H\subseteq U_1\cup\cdots \cup U_n$.
\end{proof}

The following result shows, amongst other things, that the classes of approximately quasi-compact and approximately uniformly bounded groupoids coincide.

\begin{Prop}\label{p:uniformly.bdd.vs.local.finite}
Let $\mathscr G$ be an ample groupoid.  Then the following are equivalent.
\begin{enumerate}
  \item $\mathscr G$ is approximately quasi-compact.
  \item $\mathscr G$ is approximately uniformly bounded.
  \item The inverse semigroup $\Gamma_c(\mathscr G)$ is locally finite.
  \item There is a locally finite inverse semigroup $S\subseteq \Gamma_c(\mathscr G)$ with $\bigcup S=\mathscr G$.
\end{enumerate}
\end{Prop}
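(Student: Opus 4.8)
The plan is to prove the cycle $(1)\Rightarrow(2)\Rightarrow(3)\Rightarrow(4)\Rightarrow(1)$, using Lemma~\ref{l:uniform.bound} for the first implication and Birkhoff's theorem (Theorem~\ref{t:Birkhoff}) for the third.

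For $(1)\Rightarrow(2)$: write $\mathscr G$ as a directed union $\bigcup_{\alpha}\mathscr G_\alpha$ of quasi-compact open subgroupoids. Each $\mathscr G_\alpha$ is itself a quasi-compact subspace of $\mathscr G$ containing the open subgroupoid $\mathscr G_\alpha$, so Lemma~\ref{l:uniform.bound} shows each $\mathscr G_\alpha$ is uniformly bounded; the same directed union then exhibits $\mathscr G$ as approximately uniformly bounded.

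For $(2)\Rightarrow(3)$: write $\mathscr G=\bigcup_\alpha \mathscr G_\alpha$ with each $\mathscr G_\alpha$ a uniformly bounded open subgroupoid, and form the directed family of inverse subsemigroups $\Gamma_c(\mathscr G_\alpha)\subseteq\Gamma_c(\mathscr G)$; since every compact open bisection of $\mathscr G$ is a finite union of compact open bisections each contained in some $\mathscr G_\alpha$, and the family is directed, one checks $\Gamma_c(\mathscr G)=\bigcup_\alpha\Gamma_c(\mathscr G_\alpha)$ is a directed union. As direct limits of locally finite semigroups are locally finite, it suffices to treat a single uniformly bounded $\mathscr G_\alpha$. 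So assume $\mathscr G$ is uniformly bounded by $M$. Given finitely many $U_1,\dots,U_n\in\Gamma_c(\mathscr G)$, their union $K=U_1\cup\cdots\cup U_n$ is a compact open Hausdorff subspace, and the subsemigroup they generate consists of compact open bisections contained in the finitely many $\ran$- and $\sour$-images of products, hence inside the compact open set $L=(\bigcup \ran(U_i))\cup(\bigcup\sour(U_i))$ of the unit space. I would partition $L$ into finitely many compact open ``atoms'' refining all the sets $\ran(U_i),\sour(U_i)$, so that each generator $U_i$ is a disjoint union of pieces of the form $U_i|_A$ each of which maps an atom $A$ homeomorphically onto an atom; the generated inverse semigroup then embeds (via the faithful action on the finite set of atoms together with the uniformly bounded isotropy data) into a semigroup built from a fixed finite set of partial bijections of the atom set, with fibers of size at most $M$. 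This realizes the generated subsemigroup inside a product of finite semigroups of order bounded in terms of $M$ and the number of atoms, and Birkhoff's theorem (Theorem~\ref{t:Birkhoff}) gives finiteness. The implication $(3)\Rightarrow(4)$ is immediate, taking $S=\Gamma_c(\mathscr G)$, which covers $\mathscr G$ since compact open bisections form a basis.

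The main obstacle, and the step I would be most careful with, is $(4)\Rightarrow(1)$: given a locally finite inverse subsemigroup $S\subseteq\Gamma_c(\mathscr G)$ with $\bigcup S=\mathscr G$, I need to produce a directed family of quasi-compact open subgroupoids covering $\mathscr G$. For each finitely generated (hence finite) inverse subsemigroup $T\le S$, the set $\mathscr G_T=\bigcup_{U\in T}U$ is an open subset of $\mathscr G$ closed under multiplication and inversion, so it is an open subgroupoid, and it is quasi-compact since it is a finite union of the quasi-compact sets $U\in T$. The family $\{\mathscr G_T\}$ is directed because $S$ is a directed union of its finite inverse subsemigroups (the inverse subsemigroup generated by a finite set is finite by local finiteness), and $\bigcup_T\mathscr G_T=\bigcup_{U\in S}U=\mathscr G$. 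Hence $\mathscr G$ is approximately quasi-compact. The only subtlety is confirming $\mathscr G_T$ is genuinely a subgroupoid — one must check it contains the units $\ran(U),\sour(U)=UU^{-1},U^{-1}U$ for $U\in T$ and is closed under the partially defined composition, which follows from $T$ being an inverse subsemigroup of $\Gamma_c(\mathscr G)$ together with the identity $1_U\ast 1_V=1_{UV}$, equivalently $UV\in T$ whenever $U,V\in T$.
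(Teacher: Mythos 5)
Your implications $(1)\Rightarrow(2)$, $(3)\Rightarrow(4)$ and $(4)\Rightarrow(1)$ are correct and essentially identical to the paper's argument (including the reduction of $(2)\Rightarrow(3)$ to the uniformly bounded case via directedness). The problem is the core of $(2)\Rightarrow(3)$, where there is a genuine gap: you claim that one can partition the compact open set $L$ into finitely many atoms refining the sets $\ran(U_i),\sour(U_i)$ in such a way that each generator maps atoms homeomorphically onto atoms. Such a partition does not exist in general. For instance, let $X=\{0,1\}^{\mathbb N}$ and let $\mathscr G$ be the (uniformly bounded, $M=2$) groupoid of the equivalence relation ``differ at most in the first coordinate,'' with generators $U_1=$ the global bisection flipping the first coordinate and $U_2=$ an arbitrary compact open subset $V$ of the unit space. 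The atoms refining $\ran(U_i),\sour(U_i)$ are $V$ and $X\setminus V$, but $U_1$ carries $V$ to its flip $\sigma(V)$, which is not a union of these atoms. To repair this one must close the partition up under the action of the generators, and proving that this refinement process terminates in a finite partition is precisely the nontrivial content of the implication --- so as written the argument is circular, and the appeal to Birkhoff at the end has no well-defined product of bounded finite semigroups to be applied to.

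The paper closes this gap differently: assuming $|\ran^{-1}(x)|\le n$ for all units $x$, it observes that $|\mathscr G^x_x|\cdot|\mathcal O_x|=|\ran^{-1}(x)|\le n$, hence each restriction $\mathscr G|_{\mathcal O_x}$ to a single orbit is a \emph{finite} groupoid with at most $n^2$ arrows, so $\Gamma_c(\mathscr G|_{\mathcal O_x})$ is a finite inverse semigroup of cardinality at most $2^{n^2}$. Restriction to orbits then gives an injective homomorphism $\Gamma_c(\mathscr G)\to\prod_{\mathcal O\in\mathscr G^{(0)}/\mathscr G}\Gamma_c(\mathscr G|_{\mathcal O})$ into a product of finite semigroups of uniformly bounded order, and Theorem~\ref{t:Birkhoff} applies. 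If you replace your partition argument with this orbitwise projection, the rest of your proof goes through unchanged.
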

\begin{proof}
Lemma~\ref{l:uniform.bound} shows that (1) implies (2).  Trivially, (3) implies (4).  If $T$ is any inverse semigroup of compact open bisections of $\mathscr G$, then $\bigcup T$ is an open subgroupoid.  Suppose that $S$ is a locally finite inverse semigroup of compact open bisections with $\bigcup S=\mathscr G$.  Then $S$ is the directed union of its finitely generated inverse subsemigroups, each of which is finite, and hence $\mathscr G$ is the directed union of the groupoids of the form $\mathscr H=\bigcup T$ with $T$ a finite inverse subsemigroup.  But since $T$ is finite and each element of $T$ is compact open, it follows  that $\mathscr H$ is quasi-compact and open, and so (4) implies (1).  It remains to show that (2) implies (3).

Suppose that (2) holds.  If $\mathscr G$ is the directed union of uniformly bounded open subgroupoids $\mathscr G_{\alpha}$ with $\alpha \in D$, then $\Gamma_c(\mathscr G)$ is the directed union of the $\Gamma_c(\mathscr G_{\alpha})$ since any compact set belongs to some $\mathscr G_{\alpha}$. Hence any finitely generated subsemigroup of $\Gamma_c(\mathscr G)$ is contained in some $\Gamma_c(\mathscr G_{\alpha})$.   Thus we may assume without loss of generality that $\mathscr G$ is uniformly bounded.  Suppose that $|\ran(x)|\leq n$ for all $x\in \mathscr G\skel 0$.  Then, for each $x\in \mathscr G\skel 0$, we have that $|\mathscr G^x_x|\cdot |\mathcal O_x|=|\ran\inv (x)|\leq n$.  It follows that $|\mathscr G|_{\mathcal O_x}| = |\mathcal O_x|^2\cdot |\mathscr G_x^x|\leq n^2$, and hence $\mathscr G|_{\mathcal O_x}$ is a finite closed subgroupoid,  and $\Gamma_c(\mathscr G|_{\mathcal O_x})$ is a finite inverse semigroup of cardinality at most $2^{n^2}$.

We have an injective homomorphism $\psi\colon \Gamma_c(\mathscr G)\to \prod_{\mathcal O\in \mathscr G\skel 0/\mathscr G} \Gamma_c(\mathscr G|_{\mathcal O})$ given by $\psi(U)_{\mathcal O} = U\cap \mathscr G|_{\mathcal O}$.     We deduce from Theorem~\ref{t:Birkhoff} that $\Gamma_c(\mathscr G)$ is locally finite, yielding (3).
\end{proof}

\begin{Cor}\label{c:gpd.germs}
Let $S$ be an inverse semigroup acting on a locally compact totally disconnected Hausdorff space $X$ by partial homeomorphisms between compact open subsets.  Let $\mathscr G$ be the groupoid of germs of the action.  If $S$ is locally finite, then $\mathscr G$ is approximately quasi-compact.  The converse holds if the mapping $\psi\colon S\to \Gamma_c(\mathscr G)$ given by $\psi(s) = (s,X_{s^*s})$ is injective.
\end{Cor}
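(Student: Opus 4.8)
The plan is to deduce both implications from Proposition~\ref{p:uniformly.bdd.vs.local.finite}, using the canonical homomorphism $\psi\colon S\to \Gamma_c(\mathscr G)$, $\psi(s)=(s,X_{s^*s})$, which by the construction of the groupoid of germs satisfies $\mathscr G=\bigcup\psi(S)$. Note first that each $\psi(s)=(s,X_{s^*s})$ really is a compact open bisection, since $X_{s^*s}$ is compact open (it is the domain of the idempotent $s^*s$), so $\psi(S)$ is an inverse subsemigroup of $\Gamma_c(\mathscr G)$, being the homomorphic image of an inverse semigroup.

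For the forward implication I would assume $S$ is locally finite and argue that $\psi(S)$ is then locally finite: any finitely generated subsemigroup of $\psi(S)$ is the image under $\psi$ of a finitely generated, hence finite, subsemigroup of $S$, and so is finite. Since $\bigcup\psi(S)=\mathscr G$, the inverse semigroup $\psi(S)$ witnesses condition~(4) of Proposition~\ref{p:uniformly.bdd.vs.local.finite}, and the implication (4)$\Rightarrow$(1) of that proposition gives that $\mathscr G$ is approximately quasi-compact.

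For the converse, assume in addition that $\psi$ is injective and that $\mathscr G$ is approximately quasi-compact. Then the implication (1)$\Rightarrow$(3) of Proposition~\ref{p:uniformly.bdd.vs.local.finite} shows that $\Gamma_c(\mathscr G)$ is locally finite. As $\psi$ is injective, $S$ is isomorphic to the subsemigroup $\psi(S)$ of $\Gamma_c(\mathscr G)$, and since any subsemigroup of a locally finite semigroup is locally finite, $S$ is locally finite.

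There is no genuinely hard step here: the argument is essentially bookkeeping around Proposition~\ref{p:uniformly.bdd.vs.local.finite}. The only points requiring a word of care are the elementary closure properties of the class of locally finite semigroups under homomorphic images and under subsemigroups (both already noted in Section~2), and the fact, recalled in the construction of the groupoid of germs, that $\mathscr G=\bigcup\psi(S)$, which is what allows $\psi(S)$ to serve as the inverse semigroup in condition~(4).
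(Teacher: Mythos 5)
Your proposal is correct and follows exactly the paper's own argument: both directions are reduced to Proposition~\ref{p:uniformly.bdd.vs.local.finite} via the homomorphism $\psi$ and the fact that $\mathscr G=\bigcup\psi(S)$, using that local finiteness passes to homomorphic images (forward direction) and to subsemigroups (converse).
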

\begin{proof}
We have a homomorphism $\psi\colon S\to \Gamma_c(\mathscr G)$ given by $\psi(s)=(s,X_{s^*s})$.  Moreover, $\mathscr G=\bigcup \psi(S)$.  If $S$ is locally finite, then so is $\psi(S)$ and hence $\mathscr G$ is approximately quasi-compact by Proposition~\ref{p:uniformly.bdd.vs.local.finite}.  Conversely, if $\mathscr G$ is approximately quasi-compact and  $\psi$ is injective, then $\Gamma_c(\mathscr G)$ and all its subsemigroups are locally finite by Proposition~\ref{p:uniformly.bdd.vs.local.finite}, and hence $S\cong \psi(S)$ is locally finite.
\end{proof}

Now we prove conditions (2) and (3) of Theorem~A are necessary.

\begin{Prop}\label{p:main.necessity}
Let $R$ be a unital ring and $\mathscr G$ an ample groupoid such that $R\mathscr G$ is von Neumann regular. Then $\mathscr G$ is approximately quasicompact and the order of any finite subgroup of an isotropy group of $\mathscr G$ is invertible in $R$.
\end{Prop}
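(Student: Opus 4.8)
The plan is to prove the two assertions by quite different means: the isotropy condition by a short direct computation inside $R\mathscr G$, and approximate quasi-compactness by reducing, through a descent for subgroupoids, to Connell's Theorem~\ref{t:connell}.

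\emph{Isotropy.} Fix $x\in\mathscr G\skel0$ and a finite subgroup $H\le\mathscr G^x_x$. First I would produce, by a routine neighbourhood‑shrinking argument, compact open bisections $(B_h)_{h\in H}$ with $h\in B_h$, with $B_e$ a compact open neighbourhood of $x$ in $\mathscr G\skel0$, and with $B_gB_h=B_{gh}$ for all $g,h\in H$. (Each compact open bisection through $h$ induces a partial homeomorphism $\tau_h$ of $\mathscr G\skel0$ fixing $x$; the locus on which the domain‑section of a product bisection agrees with that of a third bisection is open; and $\mathscr G\skel0$ has a basis of compact open sets; so one first forces the $\tau_h$ to compose like $H$ on a small compact open neighbourhood of $x$, then shrinks that neighbourhood to a $\{\tau_h\}$‑invariant one.) Now put $\omega=\sum_{h\in H}1_{B_h}\in R\mathscr G$. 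From $B_gB_h=B_{gh}$ one gets $1_{B_g}*\omega=\omega=\omega*1_{B_g}$, hence $\omega*\omega=(|H|\cdot1_R)\,\omega$; and, crucially, since $\operatorname{supp}(\omega)=\bigcup_hB_h$ and each $B_h$ meets $\ran\inv(x)$ (respectively $\sour\inv(x)$) only in $h$, any factorization $\alpha\beta\gamma=x$ with $\omega(\alpha),\omega(\gamma)\neq 0$ has $\alpha,\gamma\in H$ and $\beta=\alpha\inv\gamma\inv$, the last ranging over $H$ and hitting each element exactly $|H|$ times. Thus $(\omega*f*\omega)(x)=(|H|\cdot1_R)\sum_{k\in H}f(k)$ for every $f\in R\mathscr G$, while $\omega(x)=1$. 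If $R\mathscr G$ is regular, take $f$ with $\omega*f*\omega=\omega$ and evaluate at $x$: $(|H|\cdot1_R)\bigl(\sum_{k\in H}f(k)\bigr)=1$. Since $|H|\cdot1_R$ is central in $R$, it is therefore a unit, i.e.\ $|H|$ is invertible in $R$.

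\emph{Approximate quasi-compactness.} By Proposition~\ref{p:uniformly.bdd.vs.local.finite} it suffices to show that $\Gamma_c(\mathscr G)$ is locally finite, so let $T=\langle U_1,\dots,U_k\rangle\le\Gamma_c(\mathscr G)$; passing to the corner $1_V R\mathscr G\,1_V\cong R(\mathscr G|_V)$ for a compact open $V\supseteq\bigcup_i(\dom(U_i)\cup\ran(U_i))$ --- regular as a corner of a regular ring (Proposition~\ref{p:vnr}) --- we may assume $\mathscr G\skel0$ compact and $T\subseteq\Gamma_c(\mathscr G)$. The key tool is a descent: for a clopen subgroupoid $\mathscr H$, restriction of functions $E\colon R\mathscr G\to R\mathscr H$ is a well‑defined surjective $(R\mathscr H,R\mathscr H)$‑bimodule map splitting the inclusion (well‑defined because $U\cap\mathscr H$ is clopen in the compact bisection $U$, hence a compact open bisection; $(R\mathscr H,R\mathscr H)$‑bilinear because an element supported on a subgroupoid has all its factorizations inside it), so that $R\mathscr G$ regular forces $R\mathscr H$ regular: if $bab=b$ with $b\in R\mathscr H$, $a\in R\mathscr G$, then $b\,E(a)\,b=E(bab)=b$ with $E(a)\in R\mathscr H$. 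I would then argue that if $T$ is infinite it gives rise to a clopen subgroupoid with non‑regular algebra. If some $U\in T$ has infinite order, one reduces to the subgroupoid generated by $U$, which is a transformation groupoid of a $\mathbb Z$‑action on a compact totally disconnected space; its algebra is a skew group ring over $\mathbb Z$, and the element $1_U-1$ is not regular there --- writing a putative middle term $\sum_n b_n u^n$ and matching coefficients in $(1_U-1)\bigl(\sum_n b_n u^n\bigr)(1_U-1)=1_U-1$ telescopes to $b_n=0$ for all $n$, a contradiction. If instead every element of $T$ has finite order, the structure theory of finitely generated inverse semigroups produces either a bicyclic configuration (handled by a similar computation) or an infinite finitely generated group $G$ of total compact open bisections of some $\mathscr G|_e$; passing to the associated clopen group‑bundle subgroupoid and then to an appropriate corner realizes (a matrix algebra over) $RG$ as a corner of $R\mathscr G$, and Connell's theorem gives the contradiction since $G$ is not locally finite.

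I expect the main obstacle to be precisely this last case. The groups $G$ that occur are of topological‑full‑group type rather than isotropy groups, so identifying a clopen subgroupoid from which the descent (or a corner) recovers an infinite finitely generated group ring, and controlling clopenness throughout in the possibly non‑Hausdorff setting, is the delicate point; recall that, specialized to $\mathscr G=\mathscr G_S$, the necessity of approximate quasi-compactness is exactly the assertion --- open since 1970, in particular in positive characteristic --- that von Neumann regularity of $RS$ forces local finiteness of the inverse semigroup $S$.
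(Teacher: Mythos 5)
Your first half (invertibility of $|H|$) is correct: the construction of compact open bisections $B_h$ with $B_gB_h=B_{gh}$ near $x$, the element $\omega=\sum_{h\in H}1_{B_h}$, and the evaluation $(\omega * f * \omega)(x)=(|H|\cdot 1_R)\sum_{k\in H}f(k)$ all go through (each $B_h$ meets $\ran\inv(x)$ and $\sour\inv(x)$ only in $h$ because it is a bisection), and a central element with a one-sided inverse is a unit. This is a genuinely more local route than the paper, which extracts both conclusions from one Connell-style computation.

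The second half, however, has a real gap, and it sits exactly where you flag ``the main obstacle.'' Your plan is to use Connell's \emph{theorem} as a black box after a structural case analysis of a finitely generated infinite $T\leq \Gamma_c(\mathscr G)$. Two things go wrong. First, the trichotomy (infinite-order element / bicyclic configuration / infinite finitely generated subgroup) is not a theorem about finitely generated inverse semigroups: there exist infinite finitely generated \emph{periodic} inverse semigroups with locally finite (even trivial) maximal subgroups and hence no bicyclic subsemigroup and no infinite-order element --- Okni\'nski's observation that periodicity plus locally finite subgroups is strictly weaker than local finiteness is precisely the statement that this residual case is nonempty, and it is exactly the case that kept the problem open since 1970. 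Second, even in the cases you treat, non-regularity does not transport. The subgroupoid $\mathscr H=\bigcup_{g} V_g$ generated by a bisection of infinite order, or by a group $G$ of global bisections, is open but in general \emph{not} closed, so your (otherwise correct) splitting $E\colon R\mathscr G\to R\mathscr H$ for clopen $\mathscr H$ does not apply; and even where it does, $R\mathscr H$ is not $R[t,t\inv]$, a crossed product, or $RG$, but a proper quotient of such a ring obtained by collapsing germs --- and a quotient of a non-regular ring can be regular. In particular, for a topological-full-group-type subgroup $G\leq \Gamma_c(\mathscr G)$ the homomorphism $RG\to R\mathscr G$, $g\mapsto 1_{V_g}$, is typically very far from injective, and its image is neither a corner nor the algebra of a clopen subgroupoid, so Connell's theorem applied to $RG$ yields no conclusion about $R\mathscr G$. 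The paper avoids all of this by adapting Connell's \emph{proof} rather than his statement: for a finitely generated inverse submonoid $S\leq\Gamma_c(\mathscr G)$ it shows the left ideal generated by $\{1_U-1_{UU\inv}\mid U\in S\}$ is finitely generated, writes it as $R\mathscr Ge$ for an idempotent $e$, and uses $f=1-e$ together with the fibrewise sum $\ran_*$ to show $f$ is $\bigcup S$-invariant on source fibers and sums to $1$ on range fibers; since $f$ is supported on boundedly many points per fiber, this bounds $|\sour\inv(x)\cap\bigcup S|$ uniformly, with no case analysis on the structure of $S$.
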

\begin{proof}
First note that if $U$ is a compact open subset of $\mathscr G\skel 0$, then $\mathscr G|_U$ is an open subgroupoid with compact unit space and $R\mathscr G|_U = 1_UR\mathscr G1_U$. Hence if $R\mathscr G$ is regular, then so is $R\mathscr G|_U$  by Proposition~\ref{p:vnr}.  Since $\mathscr G$ is the directed union of  open subgroupoids of the form $\mathscr G|_U$, we may assume without loss of generality that $\mathscr G\skel 0$ is compact, that is, $R\mathscr G$ has the identity element $1_{\mathscr G\skel 0}$.

By Proposition~\ref{p:uniformly.bdd.vs.local.finite},  it suffices to show that $\mathscr G$ is approximately uniformly bounded, and that the order of any finite subgroup of an isotropy group is invertible in $R$.   If $S\leq \Gamma_c(\mathscr G)$ is an inverse submonoid, then $\bigcup S$ is an open subgroupoid of $\mathscr G$ containing $\mathscr G\skel 0$.  Moreover, since $\Gamma_c(\mathscr G)$ is the directed union of its finitely generated inverse submonoids and $\mathscr G=\bigcup \Gamma_c(\mathscr G)$, it suffices to show that if $S\leq \Gamma_c(\mathscr G)$ is a finitely generated inverse submonoid, then $\mathscr H=\bigcup S$ is uniformly bounded and the order of any isotropy group of $\mathscr H$ is invertible in $R$.  We proceed by adapting the idea of Connell for the case of groups~\cite{connell}.

Let $I_S$ be the left ideal of $R\mathscr G$ generated by the elements $1_U-1_{UU\inv}$ with $U\in S$.  Let $X$ be a finite generating set of $S$ as a semigroup which is closed under taking inverses.  We prove that $I_S$ is generated as a left ideal by the finite set $Y$ of elements of the form $1_V-1_{VV\inv}$ with $V\in X$.  We proceed by induction on word length, i.e., the minimum length of an expression of an element $U$ of $S$ as a product of elements of $X$, to show that $1_U-1_{UU\inv}\in K\mathscr G\cdot Y$, with the case of length $1$ being trivial.  If $U=WV$ with $V\in X$ and $W\in S$ of length one shorter than $U$, then $W\inv$ also has length one shorter than $U$ (since $X$ is closed under inversion), and so
\[1_U-1_{UU\inv} = 1_W(1_V-1_{VV\inv})- 1_{WVV\inv}(1_{W\inv}-1_{W\inv W})\in K\mathscr G\cdot Y\] by induction, as required.

Since $R\mathscr G$ is a regular ring with identity and $I_S$ is a finitely generated left ideal, there is an idempotent $e\in R\mathscr G$ with $I_S=R\mathscr Ge$ by~\cite[Theorem~1.1]{Goodearlreg}.  Let $f=1_{\mathscr G\skel 0}-e$.  Then $f$ is an idempotent and $I_Sf=R\mathscr Ge(1-e)=0$, that is, $1_Uf=1_{UU\inv}f$ for all $U\in S$.  If $f=\sum_{i=1}^rc_i1_{U_i}$ with $U_i$ compact open bisections, then $f$ can be nonzero on at most $r$ elements from the same $\sour$-fiber, a fact that we shall soon use.

Let $\gamma\in \mathscr H$ and $\beta\in \mathscr G$ with $\sour(\gamma)=\ran(\beta)$.  We claim that $f(\gamma\beta)=f(\beta)$.  Indeed,    let $U\in S$ with $\gamma\in U$.  Then $(1_Uf)(\gamma\beta) = f(\beta)$ and $(1_{UU\inv})f(\gamma\beta)=f(\gamma\beta)$.  We conclude that $f(\gamma\beta)=f(\beta)$ for all $\gamma\in \mathscr H\cap \sour^{-1}(\ran(\beta))$.

Let $\ran_*\colon R\mathscr G\to R\mathscr G\skel 0$ be the left $R$-module homomorphism induced by the local homeomorphism $\ran$.  So \[\ran_*(f)(x)=\sum_{\ran(\gamma)=x}f(\gamma).\]  Note that if $U\in \Gamma_c(\mathscr G)$, then $\ran_*(1_U) =1_{UU\inv}$.   It is well known and easy to check that $\ran_*$ is a left $R\mathscr G$-module homomorphism where $R\mathscr G$ acts on $R\mathscr G\skel 0$ via the rule \[f_1 \cdot g_1(x) = \sum_{\ran(\gamma)=x}f_1(\gamma)g_1(\sour(\gamma))\] for $f_1\in R\mathscr G$ and $g_1\in R\mathscr G\skel 0$.  Notice that if $U\in \Gamma_c(\mathscr G)$ and $V\subseteq \mathscr G\skel 0$ is compact open, then $1_U\cdot 1_V = 1_{UVU\inv}$.

If $U\in S$, then $\ran_*(1_U-1_{UU\inv}) =0$, and so $I_S\subseteq \ker \ran_*$.  Therefore, $\ran_*(f) = \ran_*(1_{\mathscr G\skel 0}-e) = \ran_*(1_{\mathscr G\skel 0}) = 1_{\mathscr G\skel 0}$, as $e\in I_S$.  Let $x\in \mathscr G\skel 0$.  Then
\begin{equation}\label{eq:augment}
1=\ran_*(f)(x) = \sum_{\ran(\beta)=x}f(\beta)
\end{equation}
 and so we can find $\beta\in \mathscr G$ with $\ran(\beta)=x$ and $f(\beta)\neq 0$.  Then, for every $\gamma\in \mathscr H\cap \sour^{-1}(x)$, we have $f(\gamma\beta) =f(\beta)\neq 0$.  Since $f$ is nonzero on at most $r$ elements with source $\sour(\beta)$, we deduce that $|\sour\inv(x)\cap \mathscr H|< r+1$.    Finally, the group $\mathscr H_x^x$ acts freely on the left of $\ran\inv (x)$, and so if we choose a transversal $T$ for this action and use $f(\gamma\beta)=f(\beta)$ for $\gamma\in \mathscr H_x^x$, we obtain from \eqref{eq:augment}
\[1=|\mathscr H_x^x|\cdot \sum_{t\in T} f(t),\] and so $|\mathscr H_x^x|$ is invertible in $R$.  This completes the proof.
\end{proof}

We now prove the sufficiency of the conditions in~Theorem~A.

\begin{Prop}\label{p:group.bundle}
Let $\mathscr G$ be a quasi-compact ample group bundle and $R$ a regular unital ring such that the order of any cyclic subgroup of an  isotropy group of $\mathscr G$ is invertible in $R$.  Then $R\mathscr G$ is regular.
\end{Prop}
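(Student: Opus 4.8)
The plan is to exhibit $R\mathscr G$ as a quotient of the inverse semigroup algebra $R\Gamma_c(\mathscr G)$ and then invoke Weissglass's criterion, Proposition~\ref{p:inverse.semigroup.easy}. Recall from the preliminaries that $1_U\ast 1_V=1_{UV}$, so $U\mapsto 1_U$ defines a surjective ring homomorphism $R\Gamma_c(\mathscr G)\to R\mathscr G$; hence by Proposition~\ref{p:vnr}(2) it suffices to show that $R\Gamma_c(\mathscr G)$ is regular. For this I must verify the two hypotheses of Proposition~\ref{p:inverse.semigroup.easy}: that $\Gamma_c(\mathscr G)$ is locally finite, and that every prime $p$ which occurs as the order of a cyclic subgroup of $\Gamma_c(\mathscr G)$ is invertible in $R$.

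For local finiteness, I would argue that since $\mathscr G$ is quasi-compact it is uniformly bounded by Lemma~\ref{l:uniform.bound}, hence (being trivially the directed union of the single uniformly bounded open subgroupoid $\mathscr G$) approximately uniformly bounded, and so $\Gamma_c(\mathscr G)$ is locally finite by Proposition~\ref{p:uniformly.bdd.vs.local.finite}.

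The heart of the argument is the second condition, and this is where the group bundle hypothesis enters. Since $\mathscr G$ is an ample group bundle, $\Gamma_c(\mathscr G)$ is a Clifford semigroup, and any cyclic subgroup lies inside some maximal subgroup $G_V=\{U\in\Gamma_c(\mathscr G)\mid \ran(U)=V=\sour(U)\}$, where $V\subseteq\mathscr G\skel 0$ is compact open. Fix such a $U$. Because $U$ is a bisection, $\sour|_U$ is injective, and because $\mathscr G$ is a group bundle two elements of $U$ are composable only when they share a source, hence only when they are equal; a straightforward induction then gives $U^k=\{\gamma^k\mid\gamma\in U\}$ for all $k\geq 1$. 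Consequently $U^k$ equals the idempotent $V$ (that is, $U$ has order dividing $k$ in $G_V$) precisely when $\gamma^k=\sour(\gamma)$ for every $\gamma\in U$, i.e.\ when each $\gamma\in U$ has order dividing $k$ in its isotropy group $\mathscr G^{\sour(\gamma)}_{\sour(\gamma)}$. In particular, if $\langle U\rangle$ has order a prime $p$, then some $\gamma\in U$ has order exactly $p$, so $\langle\gamma\rangle$ is a cyclic subgroup of order $p$ of an isotropy group of $\mathscr G$, and hence $p$ is invertible in $R$ by hypothesis.

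With both hypotheses verified, Proposition~\ref{p:inverse.semigroup.easy} gives that $R\Gamma_c(\mathscr G)$ is regular, and therefore so is its quotient $R\mathscr G$. I do not expect a serious obstacle: the only point requiring care is the computation of powers in the Clifford semigroup $\Gamma_c(\mathscr G)$, and this is exactly where the assumption that $\mathscr G$ is a group bundle (rather than an arbitrary quasi-compact ample groupoid) is used, since it makes the prime-order cyclic subgroups of $\Gamma_c(\mathscr G)$ visible inside the isotropy groups of $\mathscr G$.
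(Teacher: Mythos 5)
Your proposal is correct and follows essentially the same route as the paper: reduce to the locally finite inverse semigroup algebra $R\Gamma_c(\mathscr G)$ and apply Proposition~\ref{p:inverse.semigroup.easy}, then trace a prime-order cyclic subgroup of $\Gamma_c(\mathscr G)$ down to a prime-order element of an isotropy group. Your direct computation of $U^k=\{\gamma^k\mid\gamma\in U\}$ is just an unpacked version of the paper's injective homomorphism $H\to\prod_{x}\mathscr G^x_x$, so the two arguments coincide in substance.
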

\begin{proof}
We know that $S=\Gamma_c(\mathscr G)$ is locally finite by Proposition~\ref{p:uniformly.bdd.vs.local.finite}. We show that $RS$ is regular. It will then follow that its quotient $R\mathscr G$ is regular by Proposition~\ref{p:vnr}.    By Proposition~\ref{p:inverse.semigroup.easy} it suffices to show that if a prime $p$ is the order of a cyclic subgroup of $S$, then $p$ is invertible in $R$.

Let $U\subseteq \mathscr G\skel 0$ be the identity of a maximal subgroup $H$, and note that $H$ consists of those $V\in S$ with $\sour(V)=U=\ran(V)$.  If $V\in H$ and $x\in U$, then denote by $v_x$ the unique element of $V$ with $\ran(v_x)=x$. Since $\mathscr G$ consists of isotropy, $\sour(v_x)=x$ as well.  Then we have an injective group homomorphism $\psi\colon H\to \prod_{x\in U} \mathscr G_x^x$ given by $\psi(V)_x=v_x$.      Note that if $V\in H$ has order $p$, then  $\psi(V)$ has order $p$,  and hence $v_x^p=x$ for all $x\in U$, and some $v_x$ has order $p$.  Therefore, $p$ is invertible in $R$ by assumption.
\end{proof}

We now consider a class of quasi-compact ample groupoids whose members are building blocks of more complicated ones.   The remaining proofs are very much inspired by~\cite[Lemma~3.4]{GPS}.

\begin{Lemma}\label{l:constant.orbit.size}
Let $\mathscr G$ be a quasi-compact ample groupoid and $R$ a regular unital ring such that the order of any cyclic subgroup of an isotropy group of $\mathscr G$ is invertible in $R$.  Suppose that all orbits of $\mathscr G$ have the same cardinality.    Then $R\mathscr G$ is regular.
\end{Lemma}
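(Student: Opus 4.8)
The plan is a local-to-global reduction to Proposition~\ref{p:group.bundle}, in the spirit of~\cite[Lemma~3.4]{GPS}. Since $\mathscr G$ is quasi-compact it is uniformly bounded by Lemma~\ref{l:uniform.bound}; hence all orbits and isotropy groups are finite, and we write $n$ for the common cardinality of the orbits. Also $\mathscr G\skel 0$ is compact, so $R\mathscr G$ is unital. The crux is the following local claim: every $x_0\in\mathscr G\skel 0$ has a $\mathscr G$-invariant compact open neighbourhood $Z$ with $R\mathscr G|_Z$ regular.

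To build $Z$, enumerate the orbit of $x_0$ as $y_1=x_0,y_2,\dots,y_n$, choose arrows $\gamma_j\colon x_0\to y_j$ with $\gamma_1=x_0$, and pick compact open bisections $U_j\ni\gamma_j$ with $U_1\subseteq\mathscr G\skel 0$. By the standard shrinking manoeuvres available in an ample groupoid (using that $\mathscr G\skel 0$ is Hausdorff with a basis of compact open sets and that $\ran,\sour$ are local homeomorphisms), we may arrange that all $U_j$ have one and the same source $V=U_1$, a compact open neighbourhood of $x_0$, and pairwise disjoint ranges $W_j=\ran(U_j)$. Set $Z=\bigsqcup_{j=1}^n W_j$. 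The key observation is that for $x\in V$ the points $U_j\cdot x:=\ran\bigl((\sour|_{U_j})\inv(x)\bigr)$, $j=1,\dots,n$, are pairwise distinct (they lie in the disjoint $W_j$) and all lie in the orbit of $x$; as this orbit has exactly $n$ elements it equals $\{U_1\cdot x,\dots,U_n\cdot x\}$. From this one checks directly that $Z$ is invariant, that $\mathscr G|_V$ is a group bundle (the orbit of $x\in V$ meets $V$ only in $x$), and that $\mathscr G|_Z$ and $\mathscr G|_V$ are quasi-compact, being closed in $\mathscr G$. The isotropy groups of $\mathscr G|_V$ are isotropy groups of $\mathscr G$, so Proposition~\ref{p:group.bundle} shows $R\mathscr G|_V$ is regular. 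Finally, putting $e_{jk}=1_{U_j}1_{U_k\inv}=1_{U_jU_k\inv}$, the identities $U_j\inv U_k=\emptyset$ for $j\ne k$ and $U_j\inv U_j=V$ give $1_{U_j\inv}1_{U_k}=\delta_{jk}1_V$, while $\sum_j 1_{U_jU_j\inv}=\sum_j 1_{W_j}=1_Z$; hence $\{e_{jk}\}$ is a full system of $n\times n$ matrix units in the unital ring $R\mathscr G|_Z$ with $e_{11}=1_V$, so $R\mathscr G|_Z\cong M_n\bigl(1_VR\mathscr G|_Z1_V\bigr)=M_n(R\mathscr G|_V)$, which is regular by Proposition~\ref{p:vnr}.

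For the global step, cover the compact space $\mathscr G\skel 0$ by finitely many such invariant compact open sets $Z_1,\dots,Z_m$ and disjointify: $Y_i=Z_i\setminus(Z_1\cup\dots\cup Z_{i-1})$ is again invariant and compact open (each $Z_j$ is clopen), the $Y_i$ partition $\mathscr G\skel 0$, and $R\mathscr G|_{Y_i}=1_{Y_i}R\mathscr G|_{Z_i}1_{Y_i}$ is regular as a corner in a regular ring by Proposition~\ref{p:vnr}. Since the $Y_i$ are invariant and partition the unit space, $\mathscr G=\bigsqcup_i\mathscr G|_{Y_i}$ as a disjoint union of clopen subgroupoids, whence $R\mathscr G\cong\prod_{i=1}^m R\mathscr G|_{Y_i}$ is regular by Proposition~\ref{p:vnr}. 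I expect the only genuinely fiddly point to be the shrinking argument producing the $U_j$ with common source and disjoint ranges; the invariance of $Z$, the group-bundle structure of $\mathscr G|_V$, and the matrix-unit computation are then all routine once the right neighbourhoods have been chosen.
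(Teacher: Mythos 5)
Your proposal is correct and follows essentially the same route as the paper's proof: shrink bisections over each orbit to get a common source, disjoint ranges, and matrix units reducing to a quasi-compact group bundle handled by Proposition~\ref{p:group.bundle}, then disjointify a finite invariant cover of the compact unit space. The only (harmless) difference is that you prove regularity of each local piece $R\mathscr G|_Z$ before disjointifying and then pass to corners, whereas the paper disjointifies first and exhibits the matrix units inside each piece.
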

\begin{proof}
It follows from Lemma~\ref{l:uniform.bound} that the size of each orbit is finite, say that this common size is $n$.  Choose, for each $x\in \mathscr G\skel 0$, elements $\gamma_{x,1},\ldots, \gamma_{x,n}$ with $\sour(\gamma_{x,i})=x$ and $\mathcal O_x=\{\ran(\gamma_{x,1}),\ldots, \ran(\gamma_{x,n})\}$.  We may assume without loss of generality that $\gamma_{x,1}=x$ (the identity at $x$).  Choose for each $i$ a compact open bisection $U^x_{i,1}$ containing $\gamma_{x,i}$.  We may assume that $U^x_{1,1}\subseteq \mathscr G\skel 0$.  Since $\mathscr G\skel 0$ is Hausdorff with a basis of compact open sets and   $\ran(\gamma_{x,1}),\ldots, \ran(\gamma_{x_n})$ are pairwise distinct, we can find pairwise disjoint compact open neighborhoods $V_1,\ldots, V_n$ of $\ran(\gamma_{x,1}),\ldots, \ran(\gamma_{x,n})$, respectively, in $\mathscr G\skel 0$.  Replacing $U^x_{i,1}$ by $V_iU^x_{i,1}$, we may assume that the sets $U^x_i=\ran(U^x_{i,1})$ are pairwise disjoint.  Also, putting $U=\bigcap_{i=1}^n \sour(U^x_{i,1})$ and replacing $U^x_{i,1}$ by $U^x_{i,1}U$, we may assume that $\sour(U^x_{i,1})=\sour(U^x_{1,1})=U^x_1$ for all $i$.  Put $U^x_{i,j} = U^x_{i,1}(U^x_{j,1})\inv$.  Note that $\sour(U^x_{i,j})=U^x_j$ and $\ran(U^x_{i,j})=U^x_i$ for all $i,j$.  Also note that $U^x_{i,i} = U^x_i$ for $i=1,\ldots, n$.   Let $U^x=\bigcup_{i=1}^n U^x_i$.  Then $U^x$ is compact open and we claim that it is invariant.  For suppose that $y\in U^x_j$.  Then since $U^x_j = \sour(U^x_{i,j})$  and $U^x_i = \ran(U^x_{i,j})$ for all $i=1,\ldots, n$, we can find elements of the orbit of $y$ in each of $U^x_1,\ldots, U^x_n$.  Since these sets are pairwise disjoint, this yields $n$ elements in the orbit of $y$ and hence the whole orbit by assumption.  Thus each $U^x$ is invariant and each $U^x_i$ contains exactly one element from each orbit of $U^x$.  Also note that
\begin{equation}\label{eq:matrix.units}
U^x_{i,j}U^x_{k,\ell}=\begin{cases}U^x_{i,\ell}, & \text{if}\ j=k\\ \emptyset, & \text{otherwise.}\end{cases}
\end{equation}

Another observation is that the isotropy subgroupoid $\Iso(\mathscr G)$ of $\mathscr G$ is clopen, and hence quasi-compact.  It is closed since $\mathscr G\skel 0$ is Hausdorff and it is the equalizer of $\sour,\ran$.  It is open because if $\gamma\colon x\to x$ is isotropy and $V$ is a compact open bisection containing $\gamma$, then $U^x_1VU^x_1$ is a compact open bisection containing $\gamma$ and consisting of isotropy as $x\in U^x_1$, and $U^x_1$ contains exactly only one element from each orbit of the invariant  compact open set $U^x$, whence $\sour(\alpha),\ran(\alpha)\in U^x_1$ implies $\sour(\alpha)=\ran(\alpha)$.

Since $\mathscr G\skel 0$ is compact and the $U^x$ cover $\mathscr G\skel 0$, we can find finitely may points $x_1,\ldots, x_r$ such that $\mathscr G\skel 0=U^{x_1}\cup\cdots \cup U^{x_r}$. Without loss of generality, we may assume that $U^{x_i}\nsubseteq \bigcup_{j\neq i}U^{x_j}$ for all $i$.   Put $V_1=U^{x_1},V_2 = U^{x_2}\setminus V_1,\ldots, V_r=U^{x_r}\setminus \bigcup_{i=1}^{r-1}V_i$.  Then $V_1,\ldots, V_r$ is  partition of $\mathscr G\skel 0$ into compact open invariant sets.  In particular note that $V_iU=UV_i$ for any compact open bisection $U$.  It follows that if $\mathscr G_i = \mathscr G|_{V_i}$, then $R\mathscr G\cong \prod_{k=1}^r R\mathscr G_k$ and $R\mathscr G_k = 1_{V_k}R\mathscr G1_{V_k}$.

Note that since $\mathscr G_k=\ran\inv(V_k)$ is clopen in $\mathscr G$ it is quasi-compact.  Also $\Iso(\mathscr G_k)=\Iso(\mathscr G)\cap \mathscr G_k$ is clopen and hence quasi-compact.
Put $U^k_{i,j} = V_kU^{x_k}_{i,j}=U^{x_k}_{i,j}V_k$; this is a compact open bisection in $\mathscr G_k$.  Also note that $U^k_{1,1}\subseteq U^{x_k}_1$ does not contain any two points in the same orbit by the previous discussion.  Hence $U^k_{1,1}\mathscr G_kU^k_{1,1} = \mathscr G_k|_{U^k_{1,1}}=\Iso(\mathscr G_k)|_{U^k_{1,1}}=\Iso(\mathscr G_k)\cap \ran\inv(U^k_{1,1})$ is clopen (as $U^k_{1,1}$ is a clopen subspace of $\mathscr G\skel 0$) and thus a quasi-compact group bundle.
Note that $V_k=V_kU^{x_k}=\bigcup_{i=1}^n U^k_{i,i}$ and hence it follows from \eqref{eq:matrix.units} that the $1_{U^k_{i,j}}$ form a set of matrix units for $R\mathscr G_k$. Thus $R\mathscr G_k\cong M_n(1_{U^k_{1,1}}R\mathscr G_k1_{U^k_{1,1}})=M_n(R\Iso(\mathscr G_k)|_{U^k_{1,1}})$.  We deduce that $R\mathscr G$ is regular from Propositions~\ref{p:vnr} and~\ref{p:group.bundle}.
\end{proof}

We need one last lemma before proving our next main result.

\begin{Lemma}\label{l:lower.semi}
Let $\mathscr G$ be an \'etale groupoid and $k\geq 1$ be a natural number.  Then the set $O_k$ of elements $x\in \mathscr G\skel 0$ whose orbit has at least $k$ elements is an open invariant subset.
\end{Lemma}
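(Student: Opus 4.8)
The plan is to verify invariance and openness separately, with invariance being immediate: if $x\in O_k$ and $y\in \mathcal O_x$, then $\mathcal O_y=\mathcal O_x$ has at least $k$ elements, so $y\in O_k$; hence $O_k$ is a union of orbits.

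For openness, I would fix $x\in O_k$ and choose pairwise distinct points $y_1,\ldots,y_k\in \mathcal O_x$ together with arrows $\gamma_i\colon x\to y_i$ for $i=1,\ldots,k$. Since $\mathscr G$ is \'etale, each $\gamma_i$ lies in an open bisection $U_i$, on which both $\sour$ and $\ran$ restrict to homeomorphisms onto open subsets of $\mathscr G\skel 0$. Because $\mathscr G\skel 0$ is Hausdorff, I can pick pairwise disjoint open neighborhoods $W_1,\ldots,W_k$ of $y_1,\ldots,y_k$ in $\mathscr G\skel 0$, and after replacing $U_i$ by $U_i\cap \ran\inv(W_i)$ I may assume the sets $\ran(U_i)$ are pairwise disjoint while still having $\gamma_i\in U_i$.

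Then $V=\bigcap_{i=1}^k \sour(U_i)$ is an open neighborhood of $x$, since each $\sour(U_i)$ is open and contains $\sour(\gamma_i)=x$, and I claim $V\subseteq O_k$, which completes the argument. Given $z\in V$, for each $i$ there is a unique $\delta_i\in U_i$ with $\sour(\delta_i)=z$, and then $\ran(\delta_i)\in \ran(U_i)$; as the $\ran(U_i)$ are pairwise disjoint, the elements $\ran(\delta_1),\ldots,\ran(\delta_k)$ are pairwise distinct, and each lies in $\mathcal O_z$ via the arrow $\delta_i$. Hence $|\mathcal O_z|\geq k$, so $z\in O_k$, and $O_k$ is open.

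I do not expect a genuine obstacle: the only substantive use of the hypotheses is that Hausdorffness of $\mathscr G\skel 0$ lets us separate the finitely many orbit points $y_i$, and shrinking the bisections so that their ranges are pairwise disjoint transports that separation to all sufficiently nearby base points. No compactness, ampleness, or total disconnectedness is needed here — only that $\mathscr G$ is \'etale with Hausdorff unit space.
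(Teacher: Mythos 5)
Your proof is correct and follows essentially the same route as the paper: separate the $k$ orbit points using Hausdorffness of $\mathscr G\skel 0$, shrink open bisections through the connecting arrows so their ranges are pairwise disjoint, and intersect their sources to get a neighborhood of $x$ contained in $O_k$. The paper merely compresses the bisection-shrinking step by referring back to the proof of Lemma~\ref{l:constant.orbit.size}, whereas you spell it out explicitly.
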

\begin{proof}
Clearly $O_k$ is invariant; we claim that it is open.  Suppose that $x\in O_k$.  Let $x=y_1,y_2\ldots, y_k$ be $k$ distinct elements in the orbit of $x$.  As in the proof of Lemma~\ref{l:constant.orbit.size}, we can find open bisections $U_1,\ldots, U_k$ with $x\in \sour(U_i)$, $y_i\in \ran(U_i)$, for $i=1,\ldots, k$, and $\ran(U_1),\ldots, \ran(U_k)$ pairwise disjoint (using that $\mathscr G\skel 0$ is Hausdorff).  Then $U=\bigcap_{i=1}^k\sour(U_i)$ is a neighborhood of $x$ and if $y\in U$, then each of $\ran(U_1),\ldots,\ran(U_k)$ contains an element of the orbit of $y$.  Since these sets are pairwise disjoint, we conclude that $y\in O_k$.
\end{proof}

We can now prove the sufficiency of the conditions in Theorem~A.

\begin{Prop}\label{p:suff}
Suppose that $R$ is a regular unital ring and $\mathscr G$ is an approximately quasi-compact ample groupoid such that the order any finite subgroup of an isotropy group is invertible in $R$.  Then $R\mathscr G$ is regular.
\end{Prop}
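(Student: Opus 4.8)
The plan is to reduce to the case where $\mathscr G$ is quasi-compact and then argue by induction on the largest cardinality of an orbit, peeling off at each step the open invariant locus on which the orbit size is maximal. For the reduction, write $\mathscr G=\bigcup_\alpha\mathscr G_\alpha$ as a directed union of quasi-compact open subgroupoids. Every element of $R\mathscr G$ is a finite $R$-linear combination of functions $1_U$ with $U\in\Gamma_c(\mathscr G)$, and the union of these finitely many compact open bisections is quasi-compact, hence lies in some $\mathscr G_\alpha$ by directedness; thus $R\mathscr G$ is the directed union of the subalgebras $R\mathscr G_\alpha$. Since each $\mathscr G_\alpha$ is a quasi-compact ample groupoid whose isotropy groups are subgroups of those of $\mathscr G$ (so the invertibility hypothesis is inherited) and $R$ is regular, Proposition~\ref{p:vnr}(3) lets me assume $\mathscr G$ is quasi-compact. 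Then $\mathscr G$ is uniformly bounded by Lemma~\ref{l:uniform.bound}, so there is a largest orbit cardinality $N$, and I induct on $N$.

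By Lemma~\ref{l:lower.semi}, the set $O_N$ of units $x$ with $|\mathcal O_x|\ge N$ (equivalently $=N$, since $N$ is maximal) is open and invariant, so $R\mathscr G|_{O_N}$ is an ideal of $R\mathscr G$ with $R\mathscr G/R\mathscr G|_{O_N}\cong R\mathscr G|_X$ for $X=\mathscr G\skel 0\setminus O_N$. If $O_N=\mathscr G\skel 0$ (which always holds when $N=1$), then all orbits have the same cardinality and $R\mathscr G$ is regular by Lemma~\ref{l:constant.orbit.size}: the isotropy groups are now finite, so every cyclic subgroup of one is a finite subgroup of an isotropy group and the invertibility hypothesis supplies the hypothesis of that lemma. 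Otherwise $X$ is a nonempty closed invariant subset, $\mathscr G|_X$ is a quasi-compact ample groupoid all of whose orbits have fewer than $N$ elements, and it inherits the invertibility hypothesis, so $R\mathscr G|_X$ is regular by the inductive hypothesis. By Proposition~\ref{p:vnr}(2) it then remains only to prove that the ideal $R\mathscr G|_{O_N}$ is regular.

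For this, the crucial observation is that when $\mathscr G$ is a quasi-compact ample groupoid and $K\subseteq\mathscr G\skel 0$ is compact open, its saturation $\mathrm{Sat}(K):=\ran(\sour\inv(K))$ is again compact open and invariant: $K$ is clopen in the Hausdorff space $\mathscr G\skel 0$, so $\sour\inv(K)$ is clopen in $\mathscr G$, hence quasi-compact as a closed subset of the quasi-compact space $\mathscr G$, whence $\mathrm{Sat}(K)$ is quasi-compact; it is open because $\ran$ is a local homeomorphism, and a quasi-compact open subset of the Hausdorff space $\mathscr G\skel 0$ is clopen. Since $\mathscr G\skel 0$ has a basis of compact open sets, $O_N$ is the directed union of the sets $\mathrm{Sat}(K)$ with $K\subseteq O_N$ compact open, so $R\mathscr G|_{O_N}$ is the directed union of the subalgebras $R\mathscr G|_{\mathrm{Sat}(K)}$. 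Each $\mathscr G|_{\mathrm{Sat}(K)}$ is clopen in $\mathscr G$, hence a quasi-compact ample groupoid, and all of its orbits are orbits of $\mathscr G$ lying in $O_N$, so they all have exactly $N$ elements; therefore $R\mathscr G|_{\mathrm{Sat}(K)}$ is regular by Lemma~\ref{l:constant.orbit.size}. A directed union of regular rings is regular by Proposition~\ref{p:vnr}(3), so $R\mathscr G|_{O_N}$ is regular, closing the induction.

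The only genuinely delicate point is the topological bookkeeping forced by the fact that $\mathscr G$ need not be Hausdorff: one must check that quasi-compactness really is preserved under the operations used --- above all that saturations of compact open subsets of the unit space stay compact open --- so that the a priori unwieldy ideal $R\mathscr G|_{O_N}$ is realized as an honest direct limit of the regular algebras furnished by Lemma~\ref{l:constant.orbit.size}. Everything else is an assembly of Proposition~\ref{p:vnr}, Lemmas~\ref{l:uniform.bound}, \ref{l:constant.orbit.size} and \ref{l:lower.semi}, and the ideal/quotient description of Steinberg algebras attached to invariant subsets of the unit space.
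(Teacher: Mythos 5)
Your proof is correct and follows essentially the same route as the paper's: reduce to the quasi-compact case via direct limits, peel off the open invariant locus where the orbit size is maximal, and realize the resulting ideal as a directed union of algebras of groupoids restricted to saturations $\sour(\ran\inv(V))$ of compact open sets, so that Lemma~\ref{l:constant.orbit.size} applies. The only cosmetic difference is that you induct on the maximal orbit cardinality rather than on the number of distinct orbit sizes.
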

\begin{proof}
Since an approximately quasi-compact ample groupoid is a directed union of quasi-compact open subgroupoids, it suffices to prove the result for quasi-compact groupoids as regular rings are closed under direct limits.  So assume that $\mathscr G$ is quasi-compact and hence uniformly bounded by Lemma~\ref{l:uniform.bound}.  In particular, the cardinalities of the orbits of $\mathscr G$ are uniformly bounded.

 Let $d_1<d_2<\cdots<d_m$ be the distinct orbit sizes.  We prove the result by induction on $m$.  If $m=1$, then all orbits of $\mathscr G$ have the same size and the result follows from Lemma~\ref{l:constant.orbit.size}.   Let $U=\{x\in \mathscr G\skel 0\mid |\mathcal O_x|=d_m\}$.  Then $U$ is open and invariant by Lemma~\ref{l:lower.semi} once we note that $U=O_{d_m}$ in the notation of that lemma.   Let $X=\mathscr G\skel 0\setminus U$.  Then $I=R\mathscr G|_U$ is an ideal of $R\mathscr G$ and $R\mathscr G/I\cong R\mathscr G|_X$.  Note that $\mathscr G|_U=\ran\inv (U)$ is open and $\mathscr G|_X=\ran\inv (X)$ is closed and hence quasi-compact.  By construction, the orbits sizes in $\mathscr G|_X$ are $d_1<d_2<\cdots<d_{m-1}$, and so by induction $R\mathscr G|_X$ is regular.     If we can show that $R\mathscr G|_U$ is regular, then Proposition~\ref{p:vnr} will imply that $R\mathscr G$ is regular.

Let $V\subseteq U$ be a compact open set.  Then $V$ is clopen in $\mathscr G\skel 0$, and hence $\ran\inv(V)\subseteq \mathscr G$ is clopen, whence quasi-compact and open.  Thus $W=\sour(\ran\inv(V))$ is a compact open invariant subset of $U$ containing $V$.  Hence the compact open invariant subsets of $U$ are cofinal among compact open subsets of $U$.  Moreover, if $W$ is a compact open invariant subspace of $U$, then  $W$ is clopen in $\mathscr G\skel 0$ (as the latter is Hausdorff), and so $\mathscr G|_W=\ran\inv(W)$ is a clopen subgroupoid of $\mathscr G$, hence quasi-compact, with all orbits of size $d_m$ (as $W\subseteq U$ is invariant).  Hence $R\mathscr G|_W$ is regular by Lemma~\ref{l:constant.orbit.size}.  Since $R\mathscr G|_U$ is the direct limit of the $R\mathscr G|_W$ where $W$ runs over the compact open invariant subsets of $U$, we conclude that $R\mathscr G|_U$ is regular from Proposition~\ref{p:vnr}.  This completes the proof.
\end{proof}

\begin{Rmk}
If one follows the proofs carefully, it is not difficult to see that if $K$ is a field and $\mathscr G$ is an approximately quasi-compact ample groupoid such that no isotropy element has order divisible by the characteristic of $K$, then $K\mathscr G$ is a directed union of finite dimensional unital semisimple algebras, and hence if $K$ is algebraically closed, then $K\mathscr G$ is ultramatricial.  This just uses that the class under consideration is closed under matrix amplification and if an algebra has a finite ideal filtration with factors of this form, then the algebra has this form.
\end{Rmk}

We may now prove Theorem~A.

\begin{proof}[Proof of Theorem~A]
Necessity of (1)--(3) follows from Propositions~\ref{p:R.is.regular} and \ref{p:main.necessity}.  Sufficiency of these conditions follows from Proposition~\ref{p:suff}.
\end{proof}

\section{Graded von Neumann regularity of Steinberg algebras}
\subsection{Graded regularity}

In this section we characterize when ample groupoid algebras are graded von Neumann regular.

If $H$ is a group with identity $e$, then a ring $\Lambda$ is \emph{$H$-graded} if $\Lambda$ has a direct sum decomposition $\Lambda=\bigoplus_{h\in H}\Lambda_h$, where each $\Lambda_h$ is an additive subgroup of $\Lambda$ such that $\Lambda_h\Lambda_k \subseteq \Lambda_{hk}$ for all $h,k\in H$. The summand $\Lambda_h$ is called the \emph{homogeneous component} of degree $h$. The homogeneous component $\Lambda_e$ is a subring of $\Lambda$.  An element is \emph{homogenous} if it belongs to a homogeneous component.

 A graded ring $\Lambda$ is \emph{graded (von Neumann) regular} if for every homogeneous element $x\in \Lambda$ there exists $y\in \Lambda$ such that $x=xyx$.

Let $\mathscr{G}$ be an ample groupoid and let $H$ be a discrete group. A cocycle from $\mathscr{G}$ to $H$ is a map $c\colon\mathscr{G}\to H$ such that 
$c(\gamma\eta) =   c(\gamma)c(\eta)$ for all $(\gamma,\eta)\in \mathscr{G}\skel 2$. A continuous cocycle $c\colon\mathscr{G}\to H$ induces an
an $H$-grading $\{(R\mathscr{G})_h\}_h$ of $R\mathscr{G}$.  This is done in detail in~\cite{operatorguys1} for Hausdorff groupoids. The details for non-Hausdorff groupoids are similar, but we present them for completeness.    Call a compact open bisection $U\in \Gamma_c(\mathscr G)$ homogenous of degree $h\in H$ if $U\subseteq c\inv(h)$ for some $h\in H$.   Then $(R\mathscr G)_h$ is the $R$-span of the characteristic functions of homogeneous compact open bisections of degree $h$. Note that each $c\inv(h)$ with $h\in H$ is clopen, and so if $U$ is any compact open bisection, then $U=\bigcup_{h\in H}(U\cap c\inv(h))$ and the $U\cap c\inv(h)$ are pairwise disjoint homogeneous compact open bisections. By compactness of $U$, only finitely many of these intersections, say corresponding to $h_1,\ldots, h_n$,  are nonempty, and so $1_U = \sum_{i=1}^n 1_{(c\inv (h_i)\cap U)}$, showing that $\sum_{h\in H}(R\mathscr G)_h=R\mathscr G$.  The sum is direct since the $c\inv(h)$ are pairwise disjoint and elements of $(R\mathscr G)_h$ are supported on $c\inv (h)$.  One can, in fact, show that $(R\mathscr G)_h$ consists of those elements of $R\mathscr G$ supported on $c\inv(h)$, but we shall never need this description.  This argument also shows that the homogeneous compact open bisections form a basis for the topology of $\mathscr G$.

 Notice that $\mathscr{G}^{(0)}\subseteq c^{-1}(e)=\mathscr G_e$ and that $\mathscr G_e$ is a clopen subgroupoid and hence is ample. Moreover, $(R\mathscr{G})_e =R\mathscr{G}_e$ by definition.  From now on, we write $R\mathscr G_h$ instead of the more cumbersome $(R\mathscr G)_h$.

The following is~\cite[Definition 2.9]{Lannstrom21}.
	Let $\Lambda=\bigoplus_{h\in H}\Lambda_h$ be an $H$-graded $K$-algebra. If, for every $h\in H$ and $r\in \Lambda_h$, there exist $\epsilon_h(r)\in \Lambda_h\Lambda_{h^{-1}}$ and $\epsilon_h^{\prime}(r)\in \Lambda_{h^{-1}}\Lambda_h$ such that $\epsilon_g(r)r = r = r\epsilon_g^{\prime}(r)$, then $\Lambda$ is called \emph{nearly epsilon-strongly $H$-graded}.  It is shown in~\cite{Lannstrom21} that being nearly epsilon-strongly $H$-graded is a necessary condition for graded regularity.

\begin{Prop}\label{e-strong}
	Let $\mathscr{G}$ be an ample groupoid with a continuous cocycle $c\colon\mathscr{G}\to H$ and let $R$ be a ring. Then $R\mathscr{G}$ is nearly epsilon-strongly $H$-graded.
\end{Prop}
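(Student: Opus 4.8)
The plan is to exhibit, for each homogeneous $r$, explicit epsilon-elements built from the idempotents $1_{\ran(U)}$ and $1_{\sour(U)}$ attached to a bisection decomposition of $r$. Fix $h\in H$ and $r\in R\mathscr{G}_h$. If $r=0$ take $\epsilon_h(r)=\epsilon_h'(r)=0$; otherwise, using the description of the homogeneous components established above, write $r=\sum_{i=1}^n c_i 1_{U_i}$ with $c_i\in R$ and each $U_i\in\Gamma_c(\mathscr G)$ a homogeneous compact open bisection of degree $h$, so $U_i\subseteq c\inv(h)$. Set $W=\bigcup_{i=1}^n\ran(U_i)$ and $W'=\bigcup_{i=1}^n\sour(U_i)$, which are compact open subsets of $\mathscr G\skel 0$, hence idempotents of $\Gamma_c(\mathscr G)$; I claim $\epsilon_h(r)=1_W$ and $\epsilon_h'(r)=1_{W'}$ do the job.

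First I would check the identities $1_W * r = r = r * 1_{W'}$. Since $1_U*1_V=1_{UV}$ and $\ran(U_i)=U_iU_i\inv$, we get $1_W*1_{U_i}=1_{WU_i}$; because $W\subseteq\mathscr G\skel 0$ and $\ran(U_i)\subseteq W$, the product $WU_i$ collapses to $U_i$, so summing over $i$ yields $1_W*r=r$, and the source-side computation $r*1_{W'}=r$ is entirely symmetric. Next, and this is the only step needing a genuine argument, I would verify $1_W\in R\mathscr{G}_h R\mathscr{G}_{h^{-1}}$ and $1_{W'}\in R\mathscr{G}_{h^{-1}}R\mathscr{G}_h$. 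From the cocycle identity and $\mathscr G\skel 0\subseteq c\inv(e)$ one gets $c(\gamma\inv)=c(\gamma)\inv$, so $U_i\inv\subseteq c\inv(h^{-1})$ and hence $1_{\ran(U_i)}=1_{U_iU_i\inv}=1_{U_i}*1_{U_i\inv}\in R\mathscr{G}_h R\mathscr{G}_{h^{-1}}$. The $1_{\ran(U_i)}$ are commuting idempotents in $R\mathscr{G}_e$, and $1_W$ is their join in the Boolean algebra of compact open subsets of $\mathscr G\skel 0$, so by inclusion--exclusion $1_W$ is an integer linear combination of products $1_{\ran(U_{i_1})}\cdots 1_{\ran(U_{i_k})}$ with $k\geq 1$. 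Since $R\mathscr{G}_h R\mathscr{G}_{h^{-1}}$ is a two-sided ideal of $R\mathscr{G}_e$ (because $R\mathscr{G}_e R\mathscr{G}_h\subseteq R\mathscr{G}_h$ and $R\mathscr{G}_{h^{-1}}R\mathscr{G}_e\subseteq R\mathscr{G}_{h^{-1}}$), each such product has a factor in $R\mathscr{G}_h R\mathscr{G}_{h^{-1}}$ and the rest in $R\mathscr{G}_e$, hence lies in $R\mathscr{G}_h R\mathscr{G}_{h^{-1}}$; therefore so does $1_W$. The same argument with $h$ and $h^{-1}$ interchanged gives $1_{W'}\in R\mathscr{G}_{h^{-1}}R\mathscr{G}_h$, completing the proof.

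The only real obstacle is the overlap issue in this last step: the sets $\ran(U_i)$ need not be pairwise disjoint, so $1_W\neq\sum_i 1_{\ran(U_i)}$ in general (e.g.\ in the group case several distinct bisections can share the identity as range), and one cannot argue termwise. Passing through the observation that $R\mathscr{G}_h R\mathscr{G}_{h^{-1}}$ is an ideal of $R\mathscr{G}_e$ containing each $1_{\ran(U_i)}$, together with the Boolean inclusion--exclusion expansion of $1_W$, is what resolves it; everything else is a routine convolution computation.
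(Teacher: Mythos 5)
Your proof is correct and follows essentially the same route as the paper's: decompose a homogeneous element into homogeneous compact open bisections, take $\epsilon_h(r)=1_{\bigcup_i U_iU_i\inv}$ (your $W$ is exactly the paper's $U$), and use inclusion--exclusion in the Boolean algebra of compact open subsets of $\mathscr G\skel 0$ to see this lies in $R\mathscr G_h R\mathscr G_{h\inv}$. Your additional observation that $R\mathscr G_h R\mathscr G_{h\inv}$ is an ideal of $R\mathscr G_e$, so that the intersection terms produced by inclusion--exclusion also land in it, is a welcome bit of extra care at a point the paper states somewhat tersely.
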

\begin{proof}
	Let $\mathscr{G} = \bigoplus_{h\in H} R\mathscr{G}_h$  be the $H$-grading induced by the continuous cocycle $c\colon\mathscr{G}\to H$ as above.
	
	Fix any $h\in H$ and let $U\subseteq c^{-1}(h)$ be a compact open bisection. Define $\epsilon_h(1_U) =1_{UU^{-1}}$. Then
	\[\epsilon_h(1_U) =1_{UU^{-1}}=1_{U}1_{U^{-1}}\in R\mathscr{G}_h \, R\mathscr{G}_{h^{-1}},\]
	and
	\[\epsilon_h(1_U)1_U =1_{UU^{-1}}1_U =1_{UU^{-1}U} =1_U.\]
	Similarly, defining  $\epsilon^{\prime}_h(1_U) =1_{U^{-1}U}$ yields that $\epsilon^{\prime}_h(1_U) \in R\mathscr{G}_{h^{-1}} \, R\mathscr{G}_{h}$ and $ 1_U\epsilon^{\prime}_h(1_U)=1_U$. 	
	
	Now consider an arbitrary $f\in R\mathscr{G}_h$. Then there are compact open bisections $U_1,U_2,\ldots,U_n$, with each $U_i\subseteq c^{-1}(h)$, and $r_1,r_2,\cdots,r_n\in R$ such that $f =\sum_{i=1}^n r_i1_{U_i}$. In particular, $\epsilon_h(1_{U_i}) \in R\mathscr{G}_h \, R\mathscr{G}_{h^{-1}}$ for each $i$. Put $U=\bigcup_{i=1}^n U_iU_i^{-1}$. Then $U$ is a compact open set in $\mathscr{G}^{(0)}$. Put $\epsilon_h(f) = 1_U$. Since $U\subseteq \mathscr{G}^{(0)}$ and $\mathscr{G}^{(0)}$ is Hausdorff, it follows from the inclusion-exclusion principle that $1_U=\epsilon_h(f)$ is an integer combination of the $1_{U_iU_i\inv}$, and hence $\epsilon_h(f) \in R\mathscr{G}_h \, R\mathscr{G}_{h^{-1}}$.  Similarly, putting $U^{\prime} =\bigcup_{i=1}^n U_i^{-1}U_i$ and defining  $\epsilon^{\prime}_h(f) = 1_{U^{\prime}}$, we have that  $\epsilon^{\prime}_h(f) \in R\mathscr{G}_{h^{-1}} \, R\mathscr{G}_{h}$. It is straightforward from the construction that $\epsilon_h(f) f= f = f\epsilon^{\prime}_h(f)$. Hence, $R\mathscr{G}$ is nearly epsilon-strongly $H$-graded.
\end{proof}

\begin{Thm}\label{gr_regular}
	Let $\mathscr{G}$ be an ample groupoid, $R$ a unital ring, $H$ a discrete group and $c\colon\mathscr{G}\to H$ a continuous cocycle. Then $R\mathscr{G}$ is graded regular if and only if $R\mathscr{G}_e$ is regular.
\end{Thm}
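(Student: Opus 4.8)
One direction is formal: if $R\mathscr{G}$ is graded regular and $x\in R\mathscr{G}_e$, then $x$ is homogeneous, so there is $y=\sum_{h\in H}y_h\in R\mathscr{G}$, a finite sum with $y_h\in R\mathscr{G}_h$, satisfying $xyx=x$; since $xy_hx\in R\mathscr{G}_e\,R\mathscr{G}_h\,R\mathscr{G}_e\subseteq R\mathscr{G}_h$, comparing the degree-$e$ parts of $x=\sum_h xy_hx$ gives $xy_ex=x$ with $y_e\in R\mathscr{G}_e$, so $R\mathscr{G}_e$ is regular. The content of the theorem is the converse, which is where I would concentrate.

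So assume $\Lambda_e:=R\mathscr{G}_e$ is regular and write $\Lambda:=R\mathscr{G}$; I must show that every homogeneous $x\in\Lambda_h$ lies in $x\Lambda x$. The plan is to transport a quasi-inverse for $x$ out of the regular ring $\Lambda_e$, using that $\Lambda$ is nearly epsilon-strongly graded (Proposition~\ref{e-strong}). Pick $\epsilon_h(x)\in\Lambda_h\Lambda_{h^{-1}}\subseteq\Lambda_e$ with $\epsilon_h(x)x=x$, expand $\epsilon_h(x)=\sum_{i=1}^n a_ib_i$ with $a_i\in\Lambda_h$ and $b_i\in\Lambda_{h^{-1}}$, and put $z_i:=b_ix$, so that $z_i\in\Lambda_{h^{-1}}\Lambda_h\subseteq\Lambda_e$ and $x=\sum_{i=1}^n a_iz_i$. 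Now look at $I:=\Lambda_{h^{-1}}x$: it lies in $\Lambda_e$ because $\Lambda_{h^{-1}}\Lambda_h\subseteq\Lambda_e$, and it is a left ideal of $\Lambda_e$ because $\Lambda_e\Lambda_{h^{-1}}\subseteq\Lambda_{h^{-1}}$. The crucial observation is that $I$ is finitely generated, in fact $I=\sum_{i=1}^n\Lambda_ez_i$: for $\lambda\in\Lambda_{h^{-1}}$ one computes $\lambda x=\lambda\epsilon_h(x)x=\sum_i(\lambda a_i)z_i$ with each $\lambda a_i\in\Lambda_{h^{-1}}\Lambda_h\subseteq\Lambda_e$, and the reverse inclusion is immediate.

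Since $\Lambda_e$ is von Neumann regular, the finitely generated left ideal $I$ is generated by an idempotent, say $I=\Lambda_e\varepsilon$ with $\varepsilon=\varepsilon^2\in\Lambda_e$. Then $z_i\in I=\Lambda_e\varepsilon$ forces $z_i\varepsilon=z_i$ for each $i$, hence $x\varepsilon=\sum_i a_iz_i\varepsilon=\sum_i a_iz_i=x$; and $\varepsilon\in\Lambda_e\varepsilon=I=\Lambda_{h^{-1}}x$ lets us write $\varepsilon=yx$ for some $y\in\Lambda_{h^{-1}}\subseteq\Lambda$. Combining, $xyx=x\varepsilon=x$, so $x\in x\Lambda x$; since $x$ was an arbitrary homogeneous element, $R\mathscr{G}$ is graded regular. (Note this even yields a quasi-inverse homogeneous of degree $h^{-1}$.)

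The only step needing genuine care is the passage ``a finitely generated left ideal of a regular ring is generated by an idempotent,'' which I want from \cite[Theorem~1.1]{Goodearlreg} but which is stated there for unital rings, whereas $\Lambda_e=R\mathscr{G}_e$ is in general non-unital. I would deal with this by choosing a common local unit $u=1_C\in\Lambda_e$ with $uz_i=z_i=z_iu$ for all $i$ (take $C\subseteq\mathscr{G}^{(0)}$ a compact open set containing the sources and ranges of the bisections occurring in a representation of the $z_i$), observing that $u\Lambda_e u$ is a unital regular ring by Proposition~\ref{p:vnr}, applying \cite[Theorem~1.1]{Goodearlreg} to the finitely generated left ideal $\sum_i(u\Lambda_e u)z_i$ of $u\Lambda_e u$, and checking that the idempotent $\varepsilon\in u\Lambda_e u\subseteq\Lambda_e$ so obtained still satisfies $I=\Lambda_e\varepsilon$ and $z_i\varepsilon=z_i$. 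Equivalently, one can simply reduce at the outset to the case $\mathscr{G}^{(0)}$ compact by replacing $\mathscr{G}$ with the clopen full subgroupoid $\mathscr{G}|_C$ on such a $C$, exactly as in the proof of Proposition~\ref{p:main.necessity}: then $R(\mathscr{G}|_C)=1_CR\mathscr{G}1_C$ is unital, its degree-$e$ component $1_CR\mathscr{G}_e1_C$ is a corner of $R\mathscr{G}_e$ and hence regular by Proposition~\ref{p:vnr}, and the argument above runs with all ideals unital. Everything else is bookkeeping with the inclusions $\Lambda_g\Lambda_{g'}\subseteq\Lambda_{gg'}$ together with Propositions~\ref{e-strong} and~\ref{p:vnr}.
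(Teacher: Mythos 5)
Your proof is correct, but it takes a genuinely different route from the paper. The paper's proof of Theorem~\ref{gr_regular} is two lines: having established in Proposition~\ref{e-strong} that $R\mathscr{G}$ is nearly epsilon-strongly $H$-graded, it invokes L\"annstr\"om's result \cite[Theorem~1.2]{Lannstrom21}, which says precisely that a nearly epsilon-strongly graded ring is graded regular if and only if its identity component is regular. What you have done is re-prove the relevant implication of L\"annstr\"om's theorem in this setting rather than cite it: your easy direction (project a quasi-inverse onto its degree-$e$ component) is the standard one, and your hard direction --- using $\epsilon_h(x)x=x$ to write $x=\sum_i a_i z_i$ with $z_i\in\Lambda_e$, showing $\Lambda_{h^{-1}}x=\sum_i\Lambda_e z_i$ is a finitely generated left ideal of the regular ring $\Lambda_e$, hence of the form $\Lambda_e\varepsilon$ with $\varepsilon=yx$, $y\in\Lambda_{h^{-1}}$, and concluding $xyx=x\varepsilon=x$ --- is exactly the mechanism behind L\"annstr\"om's proof, correctly executed. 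Your treatment of the non-unitality of $\Lambda_e$ via a local unit $1_C$ and the corner $1_C\Lambda_e 1_C$ (regular by Proposition~\ref{p:vnr}) is a genuine point of care that a naive application of \cite[Theorem~1.1]{Goodearlreg} would miss, and it is handled correctly. What your approach buys is self-containedness and the slightly sharper conclusion that every homogeneous element of degree $h$ admits a quasi-inverse homogeneous of degree $h^{-1}$; what the paper's approach buys is brevity and a clean separation between the groupoid-theoretic input (Proposition~\ref{e-strong}) and the purely ring-theoretic reduction.
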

\begin{proof}
	By Proposition~\ref{e-strong}, $R\mathscr{G}$ is nearly epsilon-strongly $H$-graded. Thus, by~\cite[Theorem 1.2]{Lannstrom21}, $R\mathscr{G}$ is graded regular if and only if $R\mathscr{G}_e$ is regular.
\end{proof}

Theorem~\ref{gr_regular} allows us to upgrade Theorem~A to a theorem about graded regularity.

\begin{Thm}\label{t:graded.vnr}
Let $\mathscr G$ be an ample groupoid, $R$ a unital ring, $H$ a discrete group and $c\colon \mathscr G\to H$ a continuous cocycle.  Then $R\mathscr G$ is graded von Neumann regular (with respect to the $H$-grading induced by $c$) if and only if the following conditions hold:
\begin{enumerate}
\item $R$ is regular
\item $\mathscr G_e$ is approximately quasi-compact
\item  the order of each finite subgroup of an isotropy group of $\mathscr G_e$ is invertible in $R$.
\end{enumerate}
\end{Thm}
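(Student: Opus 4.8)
The plan is to derive this as a formal consequence of Theorem~\ref{gr_regular} together with Theorem~A. The first step is to recognize that the homogeneous component $R\mathscr G_e$ is itself the Steinberg algebra of an ample groupoid: as noted in the discussion preceding Proposition~\ref{e-strong}, the set $\mathscr G_e = c^{-1}(e)$ is a clopen subgroupoid of $\mathscr G$, hence ample, and $(R\mathscr G)_e = R\mathscr G_e$ by the definition of the grading. By Theorem~\ref{gr_regular}, $R\mathscr G$ is graded von Neumann regular (with respect to the grading induced by $c$) if and only if $R\mathscr G_e$ is von Neumann regular.

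The second step is simply to apply Theorem~A to the ample groupoid $\mathscr G_e$ and the unital ring $R$. This gives that $R\mathscr G_e$ is von Neumann regular if and only if $R$ is regular, $\mathscr G_e$ is a directed union of quasi-compact open subgroupoids (equivalently, $\mathscr G_e$ is approximately quasi-compact, in the terminology fixed after Lemma~\ref{l:uniform.bound}), and the order of each finite subgroup of an isotropy group of $\mathscr G_e$ is invertible in $R$. These are precisely conditions~(1)--(3) in the statement, so chaining the two equivalences finishes the proof.

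I do not expect any genuine obstacle here; the only points requiring (trivial) care are the identification $(R\mathscr G)_e = R\mathscr G_e$ and the fact that $\mathscr G_e$ is a bona fide ample groupoid, both of which are already established in the paragraphs introducing the cocycle grading, so that Theorem~A applies to it verbatim.
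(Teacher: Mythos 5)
Your proposal is correct and is exactly the argument the paper intends (the paper leaves it implicit, stating only that Theorem~\ref{gr_regular} ``allows us to upgrade Theorem~A''): combine Theorem~\ref{gr_regular} with Theorem~A applied to the clopen ample subgroupoid $\mathscr G_e$. The two points you flag for care — that $(R\mathscr G)_e = R\mathscr G_e$ and that $\mathscr G_e$ is ample — are indeed already settled in the paragraphs setting up the cocycle grading, so nothing further is needed.
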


\section{Applications}

Our first application is to prove Theorem~B.

\begin{proof}[Proof of Theorem~B]
The sufficiency of the conditions is proved in Proposition~\ref{p:inverse.semigroup.easy}.  For sufficiency,
suppose that $RS$ is regular.   Let  $\mathscr{G}_S$ denote the universal groupoid of $S$. Then $R\mathscr G_S\cong RS$ is regular, and so $R$ is regular, $\mathscr G_S$ is approximately quasi-compact and the order of each finite subgroup of an isotropy group of $\mathscr G_S$ is invertible in $R$.  Then $S$ is locally finite  by Corollary~\ref{c:gpd.germs} since the canonical mapping $\psi\colon S\to \Gamma_c(\mathscr G_S)$ given by $\psi(s)=(s,D_{s^*s})$ is injective.  A maximal subgroup $G_e$ of $S$ is isomorphic to the isotropy group at the principal character $x_e$, and hence the order of any finite subgroup of $G_e$ is invertible in $R$.
\end{proof}


Next we give an example to show that the necessary conditions for regularity in~\cite{AHL} are not sufficient to guarantee regularity of the algebra of a groupoid by giving examples of principal groupoids with nonregular algebras over any regular unital ring $R$.   Recall that if a discrete group $G$ acts on a locally compact, Hausdorff and totally disconnected space $X$ by homeomorphisms, then the groupoid of germs of this action is just the so-called transformation groupoid $\mathscr G$ with underlying space $G\times X$
(with the product topology) and unit space $\{1\}\times X$ (identified with $X$) with $\sour(g,x)=x$ and $\ran(g,x)=gx$.  Products are given by $(h,gx)(g,x)=(hg,x)$ and $(g,x)\inv = (g\inv ,gx)$.  The isotropy group of the transformation groupoid at $x\in X$ is just stabilizer of $x$ in $G$.  In particular, if the action of $G$ is free, then the transformation groupoid is principal.  It is well known that $K\mathscr G$ is isomorphic to the skew group ring $C_c(X,R)\rtimes G$ where $C_c(X,R)$ is the ring of compactly supported locally constant functions $X\to R$ and $G$ acts on this ring via $(gf)(x)=f(g\inv x)$.   The first part of the following result extends a result in~\cite{GPS}.

\begin{Thm}\label{t:trans.grpd}
Let $X$ be compact totally disconnected space and $G$ a discrete group acting on $X$.  Let $\mathscr G$ be the corresponding transformation groupoid.
\begin{enumerate}
  \item $\mathscr G$ is approximately quasi-compact if and only if $G$ is locally finite.
  \item If $R$ is a regular unital ring, then $R\mathscr G$ is regular if and only if $G$ is locally finite and the order of each finite subgroup of an isotropy group of the action is invertible in $R$.
\end{enumerate}
\end{Thm}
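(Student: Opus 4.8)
The plan is to derive part~(2) from part~(1) together with Theorem~A, and to obtain part~(1) by recognizing $\mathscr G$ as a groupoid of germs and invoking Corollary~\ref{c:gpd.germs}. First I would record that the isotropy group $\mathscr G^x_x$ of the transformation groupoid at a point $x\in X$ equals $\{(g,x)\mid gx=x\}$, canonically isomorphic to the stabilizer $G_x$ of $x$ in $G$. Consequently, for $\mathscr G$, condition~(3) of Theorem~A is exactly the assertion that the order of each finite subgroup of an isotropy group of the action is invertible in $R$, while condition~(2) is that $\mathscr G$ is approximately quasi-compact. Given that $R$ is assumed regular, Theorem~A then reduces part~(2) to part~(1), so essentially all the content is in part~(1).

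For part~(1) I would argue as follows. A group $G$ acting on $X$ by homeomorphisms is, in particular, an inverse semigroup acting on $X$ by partial homeomorphisms between compact open subsets (each $g$ acting on all of $X$, which is compact open), and the resulting groupoid of germs is precisely the transformation groupoid: since the natural partial order on the group $G$ is trivial, $[g,x]=[h,x]$ forces $g=h$, so the germ groupoid is identified with $G\times X$, and one checks the structure maps match those of $\mathscr G$. Moreover $\psi\colon G\to\Gamma_c(\mathscr G)$, $\psi(g)=(g,X)=\{g\}\times X$, is injective because $X\neq\emptyset$. Since a group is locally finite as an inverse semigroup exactly when it is locally finite as a group (a finitely generated subsemigroup lies inside a finitely generated subgroup, and a finitely generated subgroup is the subsemigroup generated by the generators and their inverses), Corollary~\ref{c:gpd.germs} yields immediately that $\mathscr G$ is approximately quasi-compact if and only if $G$ is locally finite. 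Alternatively, and more concretely: when $G$ is locally finite the sets $F\times X$, with $F$ ranging over the directed family of finite subgroups of $G$, are clopen quasi-compact subgroupoids whose union is $\mathscr G$, giving one implication; and when $\mathscr G$ is approximately quasi-compact, the injective homomorphism $\psi$ realizes $G$ as a subsemigroup of $\Gamma_c(\mathscr G)$, which is locally finite by Proposition~\ref{p:uniformly.bdd.vs.local.finite}, giving the other.

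I do not expect a genuine obstacle here: the statement is a repackaging of machinery already developed. The only points that require a moment's attention are verifying that the transformation groupoid genuinely coincides with the groupoid of germs of $G$ acting on $X$ (equivalently, that $\psi$ is injective and its image generates $\mathscr G$), and observing that the semigroup-theoretic and group-theoretic notions of local finiteness agree for groups, so that Corollary~\ref{c:gpd.germs} and Proposition~\ref{p:uniformly.bdd.vs.local.finite} apply verbatim.
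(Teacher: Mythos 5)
Your proposal is correct and follows the paper's proof exactly: part (1) is Corollary~\ref{c:gpd.germs} applied to the injective map $\psi(g)=\{g\}\times X$ (the paper itself records that the transformation groupoid is the groupoid of germs of the action), and part (2) then follows from Theorem~A. The extra verifications you supply (triviality of the germ relation for a group, agreement of the two notions of local finiteness) are sound but routine.
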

\begin{proof}
The first item follows from Corollary~\ref{c:gpd.germs} since the map $\psi\colon G\to \Gamma_c(\mathscr G)$ given by $\psi(g) = \{g\}\times X$ is injective.  The second item follows from the first and Theorem~A.
\end{proof}

For example, if we consider a minimal  (nonperiodic) action of $\mathbb Z$ on the Cantor set $X$ (e.g., a nonperiodic minimal shift space), then the associated transformation groupoid $\mathscr G$ is a principal groupoid (and hence trivially satisfies the necessary conditions in~\cite{AHL}), but $\mathbb C\mathscr G$ is not regular.

\subsection{Leavitt path algebras}
We follow the conventions and notation of~\cite{LeavittBook} for Leavitt path algebras. For the boundary path groupoid of a graph we use the terminology and conventions of~\cite{NylandOrtega19}.

A \emph{directed graph} $E=(E^0,E^1,r,s)$ consists of a set of vertices $E^0$, a set of edges $E^1$ and maps $r,s\colon E^1\to E^0$. A finite path in $E$ is a  finite sequence of edges $\mu= e_1e_2\cdots e_n$ such that $r(e_i)=s(e_{i+1})$.   The length of $\mu$ is $|\mu|=n$.  We also admit an empty path of length $0$ at each vertex that we often identify with the vertex. An infinite path is an infinite sequence of edges  $e_1e_2\cdots$ such that $r(e_i)=s(e_{i+1})$ for all $i\geq 1$. Let $E^*$ denote the set of all finite paths in $E$ and let $E^\infty$ denote the set of all infinite paths in $E$. A finite path $e_1e_2\cdots e_n$ is a closed path if $s(e_1)=r(e_n)$, and $E$ is acyclic if it has no nonempty closed paths. A vertex $v\in E^0$ is a sink if $|s^{-1}(v)|=0$, an infinite emitter if $|s^{-1}(v)|=\infty$ and a regular vertex otherwise. Let $E_{\text{reg}}$ denote all regular vertices, and call $E_{\text{sing}} = E^0\setminus E_{\text{reg}}$ the set of singular vertices.  The mappings $r,s$ extend to paths in the natural way.

The boundary path space of $E$ is the set  $\partial E=E^\infty \cup \{\mu\in E^* \mid  r(\mu)\in E_{\text{sing}}\}$. If $\mu\in E^*$, we let $Z(\mu) = \{\mu x\mid  r(\mu)=s(x), x\in \partial E\}$.  Sets of the form
\[ Z(\mu\setminus F) =  Z(\mu) \setminus \left(\cup_{e\in F} Z(\mu e)\right),\]
where $\mu\in E^*$ and finite $F\subseteq \{e\in E^1\mid  s(e) = r(\mu)\}$, form a basis of compact open sets for a locally compact Hausdorff topology on $\partial E$.

Let $\partial E^{\geq n} = \{\mu\in \partial E\mid |\mu|\geq n \}$. Then each $\partial E^{\geq n}$ is an open set. The shift map $\sigma\colon E^{\geq 1} \to \partial E$ is given by $\sigma(e_1e_2e_3\cdots) = e_2e_3\cdots$ for $e_1e_2e_3\cdots\in \partial E^{\geq 2}$ and $\sigma(e) =r(e)$ if $e\in E^1$. Let $\sigma^n\colon \partial E^{\geq n}\to \partial E$ denote the $n$-fold composition of $\sigma$, and let $\sigma^0=\mathrm{id}$. Then each $\sigma^n$ is a local homeomorphism between open subsets of $\partial E$.

As a set, the boundary path groupoid associated with $E$ is
\[\mathscr{G}_E = \{(x,m-n,y)\in\partial E\times \mathbb{Z}\times \partial E \mid  x\in \partial E^{\geq m}, y\in \partial E^{\geq n}, \sigma^m(x)=\sigma^n(y)\}.\]
The set of composable pairs is \[\mathscr{G}_E\skel 2 =\{((x,m,y),(y,n,z))\mid (x,m,y),(y,n,z)\in \mathscr{G}_E\}.\] Multiplication and  inverses are given by $(x,m,y)(y,n,z) = (x,m+n,z)$ and $(x,m,y)^{-1}=(y,-m,x)$, respectively. For $\mu,\nu\in E^*$ with $r(\mu)=r(\nu)$ such that $\sigma^{|\mu|}(Z(\mu\setminus F)) = \sigma^{|\nu|}(Z(\nu\setminus F))$ and finite $F\subseteq \{e\in E^1\mid  s(e) = r(\mu)\}$, let
\[Z(\mu,F,\nu) = \{(x,|\mu|-|\nu|,y) \in \mathscr{G}_E\mid  x\in Z(\mu\setminus F), y\in Z(\nu\setminus F)  \}.\]
Then the collection $\{Z(\mu,F,\nu)\}$, ranging over  $\mu,\nu\in E^*$ with $r(\mu)=r(\nu)$ and finite $F\subseteq \{e\in E^1\mid  s(e) = r(\mu)\}$, forms a basis of compact open bisections  for a locally compact Hausdorff topology on $\mathscr{G}_E$~\cite[Lemma 9.2]{NylandOrtega19}. In particular, $\mathscr{G}_E$ is an ample groupoid.  The unit space $\mathscr{G}_E^{(0)}$ is identified with $\partial E$.

All isotropy groups in $\mathscr{G}_E$ are (isomorphic to) subgroups of $\mathbb{Z}$ via $(x,m,x)\mapsto m$ for $x\in \mathscr G_E\skel 0$. The isotropy group at $x\in \partial E$ is nontrivial if and only if $x$ is eventually periodic, that is, $x = e_1e_2\cdots e_n\mu^\infty$ for some $n\in \mathbb{N}\cup\{0\}$ and closed path $\mu$ of positive length. Hence, $\mathscr{G}_E$ is principal if and only if $E$ is acyclic.

Let $K$ be a commutative ring with unit and let $L_K(E)$ denote the Leavitt path algebra of $E$ over $K$ (see~\cite{Leavitt1} for the definition of $L_K(E)$).  Then $L_K(E)\cong K\mathscr{G}_E$  with the isomorphism given by $\mu\nu^* \mapsto 1_{Z(\mu,\nu)}$~\cite[Proposition 4.3]{operatorguys1}.  In~\cite[Theorem 1]{AbramRangaswamy10} it is proved that $L_K(E)$ is regular if and only if $E$ acyclic. Using our groupoid characterization, we can give a short proof of this equivalence, and expand on it.

\begin{Thm}\label{Leavitt_reg}
	Let $E$ be a  directed graph  and $K$ a von Neumann regular commutative ring. The following are equivalent.
	\begin{enumerate}
		\item $L_K(E)$ is von Neumann regular.
		\item $E$ is acyclic.
		\item $\mathscr{G}_E$ is principal.
\item $\mathscr G_E$ is an approximately quasi-compact groupoid.
	\end{enumerate}
\end{Thm}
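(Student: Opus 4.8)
The plan is to reduce all four statements to Theorem~A via the isomorphism $L_K(E)\cong K\mathscr G_E$, using the two structural facts about $\mathscr G_E$ already recorded above: every isotropy group embeds in $\mathbb Z$ through $(x,m,x)\mapsto m$, and $\mathscr G_E$ is principal precisely when $E$ is acyclic. Since $\mathbb Z$ has no nontrivial finite subgroup, condition~(3) of Theorem~A is vacuous for $\mathscr G_E$ over any ring; as $K$ is regular by hypothesis, Theorem~A then gives (1)$\iff$(4). Statement (2)$\iff$(3) is exactly the recalled fact. For (4)$\Rightarrow$(3): an approximately quasi-compact ample groupoid is approximately uniformly bounded, hence has locally finite isotropy groups, and a locally finite subgroup of $\mathbb Z$ is trivial, so $\mathscr G_E$ is principal. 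Thus the whole theorem follows once (2)$\Rightarrow$(4) is established.

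For (2)$\Rightarrow$(4) I would invoke Proposition~\ref{p:uniformly.bdd.vs.local.finite}: it suffices to exhibit a locally finite inverse subsemigroup $S$ of $\Gamma_c(\mathscr G_E)$ with $\bigcup S=\mathscr G_E$. Take $S$ to be the inverse subsemigroup of $\Gamma_c(\mathscr G_E)$ generated by the basic bisections $Z(\mu,\emptyset,\nu)$ with $\mu,\nu\in E^*$ and $r(\mu)=r(\nu)$, together with $\emptyset$. The product of two such bisections is again of this shape or empty --- this is just the multiplication of the graph inverse semigroup transported to $\mathscr G_E$ (either $0$, or $Z(\mu\gamma,\emptyset,\beta)$ when $\alpha=\nu\gamma$, or $Z(\mu,\emptyset,\beta\delta)$ when $\nu=\alpha\delta$) --- so $S$ consists exactly of these bisections and $\emptyset$. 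Moreover $\bigcup S=\mathscr G_E$: any $(x,m-n,y)\in\mathscr G_E$ with $\sigma^m(x)=\sigma^n(y)=z$ lies in $Z(\mu,\emptyset,\nu)$ for $x=\mu z$, $y=\nu z$ with $|\mu|=m$ and $|\nu|=n$, noting $r(\mu)=s(z)=r(\nu)$.

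It then remains to show $S$ is locally finite when $E$ is acyclic. A finitely generated inverse subsemigroup $T\le S$ is generated by finitely many $Z(\mu_i,\emptyset,\nu_i)$; let $\Delta$ be the finite set of edges occurring in the paths $\mu_i,\nu_i$. By the product rule above, only paths built from edges of $\Delta$ ever appear in elements of $T$. But in an acyclic graph a path using only edges from the finite set $\Delta$ is vertex-simple, hence of length at most the number of vertices incident to $\Delta$; so there are only finitely many such paths, and therefore $T$ is finite. This establishes (2)$\Rightarrow$(4) and, with the reductions above, the whole theorem.

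I expect this last step --- that acyclicity forces the inverse semigroup of basic bisections of $\mathscr G_E$ to be locally finite --- to be the main obstacle. The delicate point is that an acyclic graph may still be infinite with paths of unbounded length, so the required finiteness has to be localized to the finite edge-set of a given finitely generated subsemigroup and then read off from vertex-simplicity of paths; it is precisely here that the hypothesis ``$E$ acyclic'' is used, dual to the way a cycle would generate an infinite cyclic subsemigroup.
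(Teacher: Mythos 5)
Your proof is correct and follows essentially the same route as the paper: the only substantive step, (2)$\Rightarrow$(4), rests in both cases on the product formula for the bisections $Z(\mu,\emptyset,\nu)$ and the fact that a finite acyclic (sub)graph carries only finitely many paths --- you package this via Proposition~\ref{p:uniformly.bdd.vs.local.finite}(4), where the paper directly exhibits the compact open subgroupoids $\mathscr G_E(F)$ indexed by finite subgraphs. The remaining implications use the same ingredients (Theorem~A together with the triviality of finite and of locally finite subgroups of $\mathbb Z$), merely arranged in a slightly different cycle of implications.
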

\begin{proof}
We already observed that acyclic is equivalent to principal.  If $L_K(E)\cong K\mathscr G_E$ is regular, then $\mathscr G_E$ has locally finite isotropy by Theorem~A and hence $E$ must be principal, as $\mathscr G_E$ has no infinite cyclic isotropy group.  Suppose now that $E$ is acyclic.  We show that $\mathscr G_E$ is approximately quasi-compact.

  Let $F$ be a finite subgraph of $E$.  Then consider the union $\mathscr G_E(F)$ of all compact open bisections $Z(\alpha,\emptyset,\beta)$  with $\alpha,\beta$ finite paths in $F$ such that $r(\alpha)=r(\beta)$.  Then $\mathscr G_E(F)$ is compact open because  there are only finitely many such paths $\alpha,\beta$ as $F$ is finite and acyclic.  It is a subgroupoid since if $\alpha,\beta,\gamma,\delta$ are finite paths in $F$ with $r(\alpha)=r(\beta)$ and $r(\gamma)=r(\delta)$, then
  \begin{equation}\label{eq:mult.graph}
  Z(\alpha,\emptyset,\beta)Z(\gamma,\emptyset,\delta)=\begin{cases}Z(\alpha\mu,\emptyset,\delta), & \text{if}\ \gamma=\beta\mu\\ Z(\alpha,\emptyset,\delta\nu), & \text{if}\ \beta=\gamma\nu\\ \emptyset, & \text{else.}\end{cases}
  \end{equation}
  Since $\mathscr G_E$ is the directed union of the $\mathscr G_E(F)$ as $F$ ranges over all finite subgraphs of $E$, we conclude that $\mathscr G_E$ is approximately quasi-compact.

  If $\mathscr G_E$ is approximately quasi-compact (and hence approximately uniformly bounded), then all its isotropy groups must be locally finite.  Since the isotropy groups of $\mathscr G_E$ are always subgroups of $\mathbb Z$, they must then be trivial, and so $L_K(E)\cong K\mathscr G_E$ is regular by Theorem~A.
\end{proof}

Next we show that we can recover~\cite[Theorem 10]{Hazrat14}, which states that the Leavitt path algebra of an arbitrary graph over a field is graded von Neumann regular. In fact, we can get away with having coefficients in a commutative regular ring.  Every Leavitt path algebra $L_K(E)$ over a commutative ring $K$ is $\mathbb{Z}$-graded, with the grading given by homogeneous components
\[L_K(E)_n = \mathrm{span}_K\{\alpha\beta^* \mid  \alpha,\beta\in E^*, |\alpha|-|\beta|=n \}.\]
The corresponding grading on $K\mathscr{G}_E$ is induced by the continuous cocycle $c\colon\mathscr{G}_E \to \mathbb{Z}$ defined by $(x,m,y)=m$.

\begin{Thm}[Hazrat]\label{Leavitt_graded_regular}
	Let $E$ be an arbitrary graph and $K$ a unital commutative von Neumann regular unital ring. Then the Leavitt path algebra $L_K(E)$ is a graded von Neumann regular ring.
\end{Thm}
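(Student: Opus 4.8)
The plan is to apply Theorem~\ref{t:graded.vnr} to the boundary path groupoid $\mathscr G_E$ equipped with the $\mathbb Z$-grading induced by the continuous cocycle $c(x,m,y)=m$, transported to $L_K(E)$ through the isomorphism $L_K(E)\cong K\mathscr G_E$. Since $K$ is von Neumann regular by hypothesis, it suffices to verify conditions (2) and (3) of that theorem for the clopen subgroupoid $\mathscr G_E\cap c^{-1}(0)$ of degree-$0$ elements. Condition (3) is immediate: the isotropy group of $\mathscr G_E$ at $x$ embeds in $\mathbb Z$ via $(x,m,x)\mapsto m$, so its intersection with $c^{-1}(0)$ is trivial, whence $\mathscr G_E\cap c^{-1}(0)$ is principal and (3) holds vacuously. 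Thus the whole proof reduces to showing that $\mathscr G_E\cap c^{-1}(0)$ is approximately quasi-compact.

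For this I would adapt the construction used for the acyclic case in the proof of Theorem~\ref{Leavitt_reg}. Given a finite subgraph $F$ of $E$ and an integer $n\geq 0$, let $\mathscr H_{F,n}$ be the union of the compact open bisections $Z(\alpha,\emptyset,\beta)$ over all pairs of paths $\alpha,\beta$ lying in $F$ with $r(\alpha)=r(\beta)$ and $|\alpha|=|\beta|\leq n$. Even when $F$ has cycles there are only finitely many paths in $F$ of length at most $n$, so $\mathscr H_{F,n}$ is a finite union of compact open bisections, hence a quasi-compact open subset of $\mathscr G_E$, and it lies in $c^{-1}(0)$ since $|\alpha|=|\beta|$ on each generator. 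Using the multiplication rule~\eqref{eq:mult.graph} I would check that $\mathscr H_{F,n}$ is closed under multiplication and inversion: when $Z(\alpha,\emptyset,\beta)Z(\gamma,\emptyset,\delta)=Z(\alpha\mu,\emptyset,\delta)$ with $\gamma=\beta\mu$, the concatenation $\alpha\mu$ is still a path in $F$ (its edges come from $\alpha$ and $\gamma$) and $|\alpha\mu|=|\alpha|+|\gamma|-|\beta|=|\gamma|\leq n$ — here the equal-length hypothesis $|\alpha|=|\beta|$ is precisely what prevents the length from exceeding $n$ — and the other case is symmetric. So each $\mathscr H_{F,n}$ is a quasi-compact open subgroupoid of $\mathscr G_E\cap c^{-1}(0)$.

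It remains to observe that $\{\mathscr H_{F,n}\}$ is a directed family with union $\mathscr G_E\cap c^{-1}(0)$. Directedness holds because $\mathscr H_{F,n}\cup\mathscr H_{F',n'}\subseteq\mathscr H_{F\cup F',\max(n,n')}$. For the union, given $(x,0,y)\in\mathscr G_E\cap c^{-1}(0)$ choose $m$ with $x,y\in\partial E^{\geq m}$ and $\sigma^m(x)=\sigma^m(y)$; writing $x=\alpha x'$, $y=\beta x'$ with $|\alpha|=|\beta|=m$ and taking $F$ to be the finite subgraph spanned by the edges of $\alpha$ and $\beta$, one gets $(x,0,y)\in Z(\alpha,\emptyset,\beta)\subseteq\mathscr H_{F,m}$. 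Hence $\mathscr G_E\cap c^{-1}(0)$ is a directed union of quasi-compact open subgroupoids, that is, approximately quasi-compact, and Theorem~\ref{t:graded.vnr} yields that $K\mathscr G_E\cong L_K(E)$ is graded von Neumann regular. The only genuinely delicate step is the verification that $\mathscr H_{F,n}$ is a subgroupoid; the rest is bookkeeping with the basic bisections $Z(\alpha,\emptyset,\beta)$ and the observation that passing to the degree-$0$ part is exactly what makes the length bound stable under the groupoid operations.
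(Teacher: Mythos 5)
Your proposal is correct and follows essentially the same route as the paper: reduce via Theorem~\ref{t:graded.vnr} to showing the degree-zero part $c^{-1}(0)$ is principal and approximately quasi-compact, and exhibit it as the directed union of the compact open subgroupoids built from equal-length pairs of paths of length at most $n$ in a finite subgraph $F$ (the paper's $\mathscr G_0(F,n)$ is exactly your $\mathscr H_{F,n}$). The verification that the equal-length condition keeps the length bound stable under the product \eqref{eq:mult.graph} is the same key point the paper relies on.
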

\begin{proof}
To simplify notation, let $\mathscr{G} =\mathscr{G}_E$ be the boundary path groupoid of $E$.  Then $\mathscr G_0 =\{(\alpha x,0,\beta x)\in \mathscr G\mid |\alpha|=|\beta|\}$. In particular, the isotropy groups of $\mathscr G_0$ are trivial as $(x,m,x)\to m$ is an isomorphism of the isotropy group of $x$ in $\mathscr G$ with a subgroup of $\mathbb Z$.
For a finite subgraph $F$ of $E$ and $n\geq 0$, we let \[\mathscr G_0(F,n)=\bigcup_{\alpha,\beta\in F^*, \ran(\alpha)=\ran(\beta),|\alpha|=|\beta|\leq n} Z(\alpha,\emptyset,\beta).\]  Then $\mathscr G_0(F,n)$ is compact open, as there are only finitely many choices of $\alpha,\beta$ given $F$ and $n$, and is a subgroupoid by \eqref{eq:mult.graph}.  If $F\subseteq F'$ and $n\leq n'$, then $\mathscr G_0(F,n)\subseteq \mathscr G_0(F',n')$.  It follows that $\mathscr G_0$ is approximately quasi-compact.  We deduce now from Theorem~\ref{t:graded.vnr} that $L_K(E)\cong K\mathscr G_E$ is graded regular.
\end{proof}

\subsection{Algebraic partial skew group rings induced by topological partial actions}

We refer the reader to~\cite{ExelBook17} for a detailed treatment on partial actions and partial skew group rings.

Let $X$ be a locally compact Hausdorff totally disconnected space, let $G$ be a discrete group with identity element $e$, and let $K$ be a commutative ring. Let $\phi=(\{X_g\}_{g\in G}, \{\phi_g\}_{g\in G})$ be a topological partial action of $G$ on $X$. In particular, each $X_g$ is a clopen subset of $X$, and $\phi_g\colon X_{g^{-1}}\to X_g$ is a homeomorphism with inverse $\phi_g^{-1} = \phi_{g^{-1}}$. For any $g\in G$, let $C_c(X_g,K)$ denote the ring of all locally constant compactly supported functions $f\colon X_g\to K$. Since $X_g$ is open in $X$, we may view $C_c(X_g,K)$ as an ideal in $C_c(X,K)$. Let $D_g= C_c(X_{g},K)$ and define $\varphi_g\colon D_{g^{-1}}\to D_g$ by $\varphi_g(f)(x) = f\circ \phi_{g^{-1}}(x)$. Then $\varphi_g$ is an isomorphism of $K$-algebras and $\varphi =(\{D_g\}_{g\in G},\{\varphi_g\}_{g\in G})$ is an algebraic partial action of $G$ on $C_c(X,K)$. The partial skew group ring is the $K$-algebra with underlying $K$-vector space
\[ C_c(X,K)\rtimes_\varphi G = \bigoplus_{g\in G} D_g\delta_g\]
and multiplication defined by $(a\delta_g)(b\delta_h) = \varphi_g((\varphi_{g^{-1}}(a)b)\delta_{gh}$. The above direct sum decomposition is a $G$-grading on $C_c(X,K)\rtimes_\varphi G$ (by definition of the product) with the identity component given by $(C_c(X,K)\rtimes_\varphi G)_e = D_e\delta_e \cong C_c(X,K)$.

The transformation groupoid associated with the partial action $\phi$ is the set
\[\mathscr{G}=\{(x,g,y)\in X\times G \times X\mid \, y\in X_{g^{-1}} \text{ and } x=\phi_g(y)\}, \]
with the set of composable pairs given by
\[\mathscr{G}\skel 2=\{((x,g,y),(y,h,z))\mid  (x,g,y),(y,h,z)\in \mathscr{G} \}\]
and multiplication and inverses defined by
\[(x,g,y)(y,h,z) = (x,gh,z) \text{ and } (x,g,y)^{-1} = (y,g^{-1},x).\]
With the relative topology from the product topology on $X\times G\times X$, $\mathscr{G}$ is an ample Hausdorff groupoid. The unit space $\mathscr{G}^{(0)}$ of $\mathscr{G}$ is identified with $X$ (as topological spaces). The groupoid has a natural $G$-grading induced by the continuous cocycle $c\colon \mathscr{G}\to G$ given by $c(x,g,y) =g$.
Hence, $K\mathscr{G}$ is $G$-graded. Moreover,  $K\mathscr{G}$ is graded isomorphic to $C_c(X,K)\rtimes_\varphi G$. Note that $K\mathscr{G}_e = K\mathscr{G}^{(0)}\cong C_c(X,K)$.

\begin{Prop}\label{partial_regular_graded}
	Let $\phi=(\{X_g\}_{g\in G}, \{\phi_g\}_{g\in G})$ be a topological partial action of a discrete group $G$ on locally compact Hausdorff and totally disconnected space $X$. Then $C_c(X,K)\rtimes_\varphi G$ is graded regular.
\end{Prop}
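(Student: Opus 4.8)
The plan is to deduce this from Theorem~\ref{gr_regular} (equivalently, Theorem~\ref{t:graded.vnr}) applied to the transformation groupoid $\mathscr G$ of $\phi$, using the graded isomorphism $C_c(X,K)\rtimes_\varphi G\cong K\mathscr G$ noted just before the statement, where the $G$-grading on $K\mathscr G$ is the one induced by the continuous cocycle $c\colon\mathscr G\to G$, $c(x,g,y)=g$. By Theorem~\ref{gr_regular}, $K\mathscr G$ is graded regular if and only if its identity component $K\mathscr G_e$ is von Neumann regular, so everything reduces to regularity of $K\mathscr G_e$.

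First I would pin down $\mathscr G_e=c^{-1}(e)$. Since $\phi$ is a partial action, $X_e=X$ and $\phi_e=\mathrm{id}_X$, so $c^{-1}(e)=\{(x,e,x)\mid x\in X\}=\mathscr G^{(0)}$; that is, $\mathscr G_e$ is the unit space viewed as a groupoid with only identity arrows. In particular $K\mathscr G_e=K\mathscr G^{(0)}\cong C_c(X,K)$ and the isotropy groups of $\mathscr G_e$ are trivial. It remains to observe that $\mathscr G^{(0)}=X$ is approximately quasi-compact: since $X$ is locally compact, Hausdorff and totally disconnected it has a basis of compact open sets, and the compact open subsets of $X$ — which, being clopen, are exactly the quasi-compact open subgroupoids of $\mathscr G^{(0)}$ — cover $X$ and form a directed family, as a finite union of compact open sets is compact open.

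Putting this together, Theorem~\ref{t:graded.vnr} applied to $K\mathscr G$ with its $c$-grading has conditions (2) and (3) automatically satisfied, so $K\mathscr G$, hence $C_c(X,K)\rtimes_\varphi G$, is graded regular exactly when condition (1) holds, i.e.\ when $K$ is von Neumann regular; in particular this is the case under the standing assumption that $K$ is von Neumann regular. One can also bypass a second appeal to the groupoid characterization: $C_c(X,K)$ is a directed union of its corners $1_V C_c(X,K)1_V\cong C(V,K)$ over compact open $V\subseteq X$, and for such $V$ the ring $C(V,K)$ is a direct limit of finite direct products of copies of $K$ taken over the finite clopen partitions of $V$; so $C_c(X,K)$ is regular whenever $K$ is by Proposition~\ref{p:vnr}, and Theorem~\ref{gr_regular} then concludes.

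The argument is short because the one substantive point — that the $G$-grading on the partial skew group ring coincides with a cocycle grading on $K\mathscr G$ and that its identity component is $C_c(X,K)$ — is already in place in the paragraph preceding the statement. The only thing demanding genuine (if minor) care is the identification $\mathscr G_e=\mathscr G^{(0)}$, which rests on $X_e=X$ and $\phi_e=\mathrm{id}_X$; after that the proof is a direct invocation of results proved earlier.
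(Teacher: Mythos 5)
Your proposal is correct and follows essentially the same route as the paper: reduce via Theorem~\ref{gr_regular} to regularity of the identity component $K\mathscr G_e\cong C_c(X,K)$, and then verify that $C_c(X,K)$ is regular (the paper does this by writing $f=\sum k_i 1_{U_i}$ with disjoint compact open $U_i$ and exhibiting an explicit quasi-inverse $\sum k_i'1_{U_i}$, whereas you use the direct-limit/groupoid characterization — a cosmetic difference). You are also right to flag that this last step needs $K$ von Neumann regular, a hypothesis the proposition's statement leaves implicit but which the paper's own proof uses when choosing $k_i'$ with $k_ik_i'k_i=k_i$.
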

\begin{proof}
	Let $\mathscr{G}$ denote the transformation groupoid associated with $\phi$, and let $e\in G$ denote the identity element. By Theorem~\ref{gr_regular}, $C_c(X,K)\rtimes_\varphi G\cong K\mathscr{G}$ is graded regular if and only if $K\mathscr{G}_e\cong C_c(X,K)$ is regular.  But $C_c(X,K)$ is trivially regular.  Indeed, if $f\in C_c(X,K)$, then we can write $f=\sum_{i=1}^n k_i1_{U_i}$ with the $U_i$ disjoint compact open sets (as $X$ is Hausdorff).  If we choose $k_i'\in K$ with $k_ik'_ik_i=k_i$ and put $f'=\sum_{i=1}^n k_i'1_{U_i}$,  then $f'\in C_c(X,K)$ and $ff'f=f$.
\end{proof}

\subsection{Leavitt labeled path algebras}
Let $E=(E^0,E^1,r,s)$ be a directed graph and let $\mathscr{A}$ be a set, called the alphabet. Let $\mathscr{A}^*$ denote the set of all finite words over $\mathscr{A}$ together with the empty word  $\varepsilon$, and let $\mathscr{A}^\infty$ denote all infinite words over $\mathscr{A}$.
An edge-labeling on $E$ is a surjective map $\mathscr{L}\colon E^1\to \mathscr{A}$. The pair $(E,\mathscr{L})$ is called a \emph{labeled graph}.	If $\mu = e_1e_2\cdots e_n$ is a finite or an infinite path in $E$, then $\mathscr{L}$ extends to a \emph{labeled path}  $\mathscr{L}(\mu) = \mathscr{L}(e_1)\mathscr{L}(e_2)\cdots\mathscr{L}(e_n)$. The length of a labeled path $\alpha=\mathscr{L}(\mu)$ is defined as the length of the path $\mu$ in $E$.

Let $\mathscr{L}^{n}=\mathscr{L}(E^n)$ denote the set of labeled paths of length $n$, let $\mathscr{L}^\infty$ denote the set of labeled paths of infinite length, and let $|\varepsilon|=0$. Set $\mathscr{L}^{\geq1}=\cup_{n\geq 1}\mathscr{L}^{n}$, $\mathscr{L}^*=\{\varepsilon\}\cup\mathscr{L}^{\geq 1}$, and $\mathscr{L}^{\leq \infty}=\mathscr{L}^*\cup\mathscr{L}^\infty$.

A labeled path $\alpha$ is a \emph{beginning} of a labeled path $\beta$ if $\beta=\alpha\beta'$ for some labeled path $\beta'$. If $1\leq i\leq j\leq |\alpha|$, let $\alpha_{i,j}=\alpha_i\alpha_{i+1}\ldots\alpha_{j}$ if $j<\infty$ and $\alpha_{i,j}=\alpha_i\alpha_{i+1}\ldots$ if $j=\infty$. If $j<i$ set $\alpha_{i,j}=\varepsilon$. Define $\overline{\mathscr{L}^\infty}$ to be the set of all infinite words such that all beginnings are finite labeled paths. We write $\overline{\mathscr{L}^{\leq \infty}}=\mathscr{L}^*\cup\overline{\mathscr{L}^\infty}$.

Let $\mathscr{P}(E^0)$ denote the power set of $E^0$. The \emph{relative range} of $\alpha\in\mathscr{L}^*$ with respect to  $A\in\mathscr{P}(E^0)$ is the set
\[r(A,\alpha)=
\begin{cases}
	\{r(\mu)\ |\ \mu\in E^{\ast},\ \mathscr{L}(\mu)=\alpha,\ s(\mu)\in A\}, &
	\text{if }\alpha\in\mathscr{L}^{\geq1}\\
	A, & \text{if }\alpha=\varepsilon.
\end{cases}
\]
The range $r(\alpha)$ of $\alpha$ is the set $r(\alpha)=r(E^0,\alpha)$.
We define \[\mathscr{L}(A E^1)=\{\mathscr{L}(e) \mid e\in E^1\ \mbox{and}\ s(e)\in A\}=\{a\in \mathscr{A} \mid r(A,a)\neq\emptyset\}.\]

A \emph{labeled space} is a triple $\mathscr{E} = (E,\mathscr{L},\mathscr{B})$ where $(E,\mathscr{L})$ is a labeled graph and $\mathscr{B}$ is a subset of $\mathscr{P}(E^0)$, which is closed under finite intersections and finite unions, contains $r(\alpha)$ for every $\alpha\in\mathscr{L}^*$, and is closed under relative ranges, that is, $r(A,\alpha)\in\mathscr{B}$ for all $A\in\mathscr{B}$ and all $\alpha\in\mathscr{L}^*$. We call $\mathscr{B}$ an \textit{accommodating family} for $\mathscr{E}$. A labeled space $\mathscr{E}$ is \emph{weakly left-resolving} if for all $A,B\in\mathscr{B}$ and all $\alpha\in\mathscr{L}^{\geq 1}$ we have $r(A\cap B,\alpha)=r(A,\alpha)\cap r(B,\alpha)$.  A weakly left-resolving labeled space is \emph{normal} if $\mathscr{B}$ is closed under relative complements.
A non-empty set $A\in\mathscr{B}$ is \emph{regular} if for all $\emptyset\neq B\subseteq A$, we have that $0<|\mathscr{L}(B E^1)|<\infty$. The subset of all regular elements of $\mathscr{B}$ together with the empty set is denoted by $\mathscr{B}_{reg}$.
For $\alpha\in\mathscr{L}^*$, define \[\mathscr{B}_\alpha=\mathscr{B}\cap\mathscr{P}(r(\alpha))=\{A\in\mathscr{B}\mid A\subseteq r(\alpha)\}.\] If a labeled space is normal, then  $\mathscr{B}_\alpha$ is a Boolean algebra for each $\alpha\in\mathscr{L}^*$.

\begin{Def}[Definition~3.1 of~\cite{BCGW23}]
	Let $K$ be a field and let  $\mathscr{E} = (E,\mathscr{L},\mathscr{B})$ be a normal labeled space.  The \emph{Leavitt labelled path algebra $L_K(\mathscr{E})$ associated with $\mathscr{E}$ and with coefficients in $K$} is the universal $K$-algebra with generators $\{p_A \mid A\in \mathscr{B}\}\cup\{s_a,s_a^* \mid a\in\mathscr{A}\}$ subject to the relations
	\begin{enumerate}
		\item[(i)] $p_{A\cap B}=p_Ap_B$, $p_{A\cup B}=p_A+p_B-p_{A\cap B}$ and $p_{\emptyset}=0$, for every $A,B\in\mathscr{A}$;
		\item[(ii)] $p_As_a=s_ap_{r(A,a)}$ and $s_a^{*}p_A=p_{r(A,a)}s_a^{*}$, for every $A\in\mathscr{B}$ and $a\in\mathscr{A}$;
		\item[(iii)] $s_a^*s_a=p_{r(a)}$ and $s_b^*s_a=0$ if $b\neq a$, for every $a,b\in\mathscr{A}$;
		\item[(iv)] $s_as_a^*s_a=s_a$ and $s_a^{*}s_as_a^{*}=s_a^{*}$ for every $a\in\mathscr{A}$;
		\item[(v)] For every $A\in\mathscr{B}_{reg}$,
		\[p_A=\sum_{a\in\mathscr{L}(A E^1)}s_ap_{r(A,a)}s_a^*.\]
	\end{enumerate}
\end{Def}

Leavitt labeled path algebras generalize Leavitt path algebras of graphs,  Leavitt path algebras of ultragraphs and algebraic Exel-Laca algebras.

Next, we describe a partial action associated with a labeled space and its transformation groupoid.  The Steinberg algebra of this transformation groupoid is isomorphic to the Leavitt labeled path algebra.

\subsubsection{Inverse semigroup of a labeled space}
Let  $\mathscr{E} = (E,\mathscr{L},\mathscr{B})$ be a normal labeled space and let \[S_\mathscr{E}=\{(\alpha,A,\beta)\mid \alpha,\beta\in\mathscr{L}^*\ \mbox{and}\ A\in\mathscr{B}_{\alpha}\cap\mathscr{B}_{\beta}\ \mbox{with}\ A\neq\emptyset\}\cup\{0\}.\]
Define a binary operation on $S_\mathscr{E}$ by letting $s\cdot 0= 0\cdot s=0$ for all $s\in S_\mathscr{E}$ and, if $s=(\alpha,A,\beta)$ and $t=(\gamma,B,\delta)$ are in $S_\mathscr{E}$, then \[s\cdot t=\left\{\begin{array}{ll}
	(\alpha\gamma ',r(A,\gamma ')\cap B,\delta), & \mbox{if}\ \  \gamma=\beta\gamma '\ \mbox{and}\ r(A,\gamma ')\cap B\neq\emptyset,\\
	(\alpha,A\cap r(B,\beta '),\delta\beta '), & \mbox{if}\ \  \beta=\gamma\beta '\ \mbox{and}\ A\cap r(B,\beta ')\neq\emptyset,\\
	0, & \mbox{otherwise}.
\end{array}\right. \]
Also, define $(\alpha,A,\beta)^*=(\beta,A,\alpha)$.  With these operations $S_\mathscr{E}$ is an inverse semigroup with zero element $0$; see~\cite[Proposition 3.4]{BoavaDeCastroMortari1}. The semilattice of idempotents of $S_\mathscr{E}$ is \[E(S_\mathscr{E})=\{(\alpha, A, \alpha) \mid \alpha\in\mathscr{L}^* \ \mbox{and} \ A\in\mathscr{B}_{\alpha}\}\cup\{0\}.\]

The natural order on the semilattice $E(S_\mathscr{E})$ is given by $p\leq q$ if only if $pq=p$. In our case, the order can described by noticing that if $p=(\alpha, A, \alpha)$ and $q=(\beta, B, \beta)$, then $p\leq q$ if and only if $\alpha=\beta\alpha'$ and $A\subseteq r(B,\alpha')$~\cite[ Proposition 4.1]{BoavaDeCastroMortari1}.

\subsubsection{Filters in $E(S_\mathscr{E})$}
The space on which we will define a partial action is the tight spectrum
$\mathsf{T}$ of $E(S_\mathscr{E})$ (see~\cite[Section 12]{ Exel}). One may view the tight spectrum as the closure of the set of ultrafilters (maximal proper filters) in $E(S_\mathscr{E})$, by using the correspondence between characters on $E(S_\mathscr{E})$ (with the topology of pointwise convergence) and filters in $E(S_\mathscr{E})$. Namely, characters are precisely the characteristic functions of filters.

Filters in $E(S_\mathscr{E})$ are related to labeled paths in $\mathscr{E}$ in the following way. Let $\alpha\in\overline{\mathscr{L}^{\leq \infty}}$  and $\{\mathcal{F}_n\}_{0\leq n\leq|\alpha|}$ (understanding that ${0\leq n\leq|\alpha|}$ means $0\leq n<\infty$ when $\alpha\in\overline{\mathscr{L}^{\infty}}$) be a family such that $\mathcal{F}_n$ is a filter in $\mathscr{B}{\alpha_{1,n}}$ for every $n>0$, and $\mathcal{F}_0$ is either a filter in $\mathscr{B}$ or $\mathcal{F}_0=\emptyset$. The family $\{\mathcal{F}_n\}_{0\leq n\leq|\alpha|}$ is  a  \emph{complete family for} $\alpha$ if
\[\mathcal{F}_n = \{A\in \mathscr{B}{\alpha_{1,n}} \mid r(A,\alpha_{n+1})\in\mathcal{F}_{n+1}\}\]
for all $n\geq0$.

\begin{Thm}[Theorem~4.13 of~\cite{BoavaDeCastroMortari1}]\label{thm.filters.in.E(S)}
	Let $\mathscr{E} = (E,\mathscr{L},\mathscr{B})$ be a normal labeled space. Then there is a bijective correspondence between filters in $E(S_\mathscr{E})$ and pairs $(\alpha, \{\mathcal{F}_n\}_{0\leq n\leq|\alpha|})$, where $\alpha\in\overline{\mathscr{L}^{\leq \infty}}$ and $\{\mathcal{F}_n\}_{0\leq n\leq|\alpha|}$ is a complete family for $\alpha$.
\end{Thm}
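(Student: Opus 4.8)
The plan is to exhibit the correspondence explicitly in both directions and then verify that the two constructions are mutually inverse, working throughout from the description of the multiplication and of the natural order on $E(S_\mathscr{E})$ recalled before the statement.

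\emph{From filters to pairs.} Let $\xi$ be a (proper) filter in $E(S_\mathscr{E})$ and let $P(\xi)=\{\beta\in\mathscr{L}^*\mid(\beta,B,\beta)\in\xi\text{ for some }B\}$ be the set of first coordinates of its members. First I would show $P(\xi)$ is linearly ordered by the ``is a beginning of'' relation: given $(\beta,B,\beta),(\gamma,C,\gamma)\in\xi$, downward directedness provides $(\delta,D,\delta)\in\xi$ below both, and by the order formula $\beta$ and $\gamma$ are then both beginnings of $\delta$, hence comparable. Next, $P(\xi)$ is closed under passing to beginnings of positive length: if $\gamma$ is such a beginning of $\beta$, write $\beta=\gamma\beta'$; then $r(\gamma)\in\mathscr B_\gamma$, $r(r(\gamma),\beta')=r(\gamma\beta')=r(\beta)\supseteq B$, so $(\beta,B,\beta)\le(\gamma,r(\gamma),\gamma)$ and upward closure puts $(\gamma,r(\gamma),\gamma)$ in $\xi$. (Whether in addition $\varepsilon\in P(\xi)$ is precisely the dichotomy that will govern whether $\mathcal F_0=\emptyset$.) Hence there is a unique $\alpha\in\overline{\mathscr L^{\le\infty}}$ whose positive-length beginnings are exactly the positive-length members of $P(\xi)$. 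For $0\le n\le|\alpha|$ set $\mathcal F_n=\{A\in\mathscr B_{\alpha_{1,n}}\mid(\alpha_{1,n},A,\alpha_{1,n})\in\xi\}$. Since on idempotents with a fixed first coordinate the product and order of $S_\mathscr{E}$ restrict to intersection and inclusion in $\mathscr B_{\alpha_{1,n}}$, each nonempty $\mathcal F_n$ is a filter there, and nonemptiness holds whenever $n\ge1$ (because $\alpha_{1,n}\in P(\xi)$) or $\varepsilon\in P(\xi)$. Finally the completeness relation $\mathcal F_n=\{A\mid r(A,\alpha_{n+1})\in\mathcal F_{n+1}\}$ follows from upward closure together with the inequality $(\alpha_{1,n+1},r(A,\alpha_{n+1}),\alpha_{1,n+1})\le(\alpha_{1,n},A,\alpha_{1,n})$ for one inclusion, and from directedness of $\xi$ plus the identity $r(r(A,\mu),\nu)=r(A,\mu\nu)$ for the other.

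\emph{From pairs to filters.} Given $(\alpha,\{\mathcal F_n\})$, put $\xi=\{(\beta,B,\beta)\in E(S_\mathscr{E})\mid\beta\text{ a beginning of }\alpha,\ B\in\mathcal F_{|\beta|}\}$. Then $0\notin\xi$; $\xi\ne\emptyset$ since some $\mathcal F_n\ne\emptyset$; $\xi$ is upward closed by iterating the completeness relation and using that each $\mathcal F_m$ is upward closed in $\mathscr B_{\alpha_{1,m}}$; and $\xi$ is downward directed, since for members with first coordinates $\beta$ and $\gamma=\beta\gamma'$ the element $(\gamma,\,C\cap r(B,\gamma'),\,\gamma)$ lies below both, where $r(B,\gamma')\in\mathcal F_{|\gamma|}$ by the completeness relation and the intersection is again in the filter $\mathcal F_{|\gamma|}$, hence nonempty. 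So $\xi$ is a filter.

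\emph{Mutual inverseness and the main obstacle.} Starting from a filter $\xi$, forming $(\alpha,\{\mathcal F_n\})$ and rebuilding $\xi'$ returns $\xi$: the inclusion $\xi\subseteq\xi'$ is immediate from the definitions of $\alpha$ and the $\mathcal F_n$, and $\xi'\subseteq\xi$ is the upward closure already used. Starting from a complete family, building $\xi$ and extracting $(\alpha',\{\mathcal F'_n\})$ recovers $\alpha'=\alpha$ (the first coordinates of $\xi$ are the beginnings of $\alpha$) and then $\mathcal F'_n=\mathcal F_n$ by the very definition of $\xi$. The part that needs genuine care, rather than routine unwinding of the formulas for $S_\mathscr{E}$, is the opening move: establishing that the first coordinates of a filter form a beginning-closed chain, and organizing the construction of $\alpha$ and of the (possibly empty) component $\mathcal F_0$ so that the infinite case $\alpha\in\overline{\mathscr L^\infty}$ and the degenerate case $\mathcal F_0=\emptyset$ are treated uniformly. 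Normality and weak left-resolvingness of $\mathscr E$ enter only to keep the relative ranges and relative complements appearing above inside $\mathscr B$.
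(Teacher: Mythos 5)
This statement is not proved in the paper at all: it is imported verbatim as Theorem~4.13 of~\cite{BoavaDeCastroMortari1}, so there is no in-paper argument to compare yours against. Your reconstruction is correct and follows the natural route (and, by all indications, the route of the cited source): extract the word $\alpha$ from the beginning-closed chain of first coordinates of the filter, read off the $\mathcal F_n$ exactly as in the paper's own display \eqref{eq.defines.xi_n}, build the inverse map by the evident formula, and check the two assignments undo each other. One small correction to your closing remark: weak left-resolvingness does not enter ``only to keep relative ranges inside $\mathscr B$.'' Closure of each $\mathcal F_n$ under intersection is best obtained from the general fact that a filter in the idempotent semilattice is closed under the meet, which the multiplication formula computes directly as $(\alpha_{1,n},A,\alpha_{1,n})(\alpha_{1,n},B,\alpha_{1,n})=(\alpha_{1,n},A\cap B,\alpha_{1,n})$; if one instead routes the argument through a common lower bound $(\alpha_{1,m},D,\alpha_{1,m})$ with $m>n$, then the identity $r(A\cap B,\delta')=r(A,\delta')\cap r(B,\delta')$ — weak left-resolvingness itself — is genuinely needed, not merely membership of relative ranges in $\mathscr B$. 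Your handling of the $\varepsilon$ versus $\mathcal F_0=\emptyset$ dichotomy is the right way to make the finite, infinite and degenerate cases uniform.
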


A filter $\xi$ in $E(S_\mathscr{E})$ with associated labeled path $\alpha\in\overline{\mathscr{L}^{\leq \infty}}$ is sometimes denoted by $\xi^\alpha$ to stress the word $\alpha$; in addition, the filters in the complete family associated with $\xi^\alpha$ will be denoted by $\xi^{\alpha}_n$ (or simply $\xi_n$). Specifically,
\begin{align}\label{eq.defines.xi_n}
	\xi^{\alpha}_n=\{A\in\mathscr{B} \mid (\alpha_{1,n},A,\alpha_{1,n}) \in \xi^\alpha\}.
\end{align}

\begin{Rmk}
	It follows from~\cite[Propositions 4.4 and 4.8]{BoavaDeCastroMortari1} that for a filter $\xi^\alpha$ in $E(S_\mathscr{E})$ and an element $(\beta,A,\beta)\in E(S_\mathscr{E})$ we have that $(\beta,A,\beta)\in\xi^{\alpha}$ if and only if $\beta$ is a beginning of $\alpha$ and $A\in\xi_{|\beta|}$.
\end{Rmk}

See~\cite[Theorems 5.10 and 6.7]{BoavaDeCastroMortari1}  for a characterization of tight filters in terms of labeled paths.

\subsubsection{Partial action on the tight filters}
We give a brief overview of the partial action on $\mathsf{T}$ introduced in~\cite{CastrovWyk20}.

Fix a normal labeled space $\mathscr{E} = (E,\mathscr{L},\mathscr{B})$. We describe the topology on $\mathsf{T}$, the tight spectrum of $\mathscr{E}$, inherited from the product topology on $\{0,1\}^{E(S_\mathscr{E})}$, where $\{0,1\}$ has the discrete topology. For each $e\in E(S_\mathscr{E})$, define
\begin{equation*} \label{dfn:tight.spec.basic.open.neigh}
	V_e=\{\xi\in\mathsf{T}\mid e\in\xi\},
\end{equation*}
and for $\{e_1,\ldots,e_n\}$ a finite (possibly empty) set in $E(S)$, we define
\[ V_{e\colon e_1,\ldots,e_n}= V_e\cap V_{e_1}^c\cap\cdots\cap V_{e_n}^c=\{\xi\in\mathsf{T}\mid e\in\xi,e_1\notin\xi,\ldots,e_n\notin\xi\}.\]
Sets of the form $V_{e\colon e_1,\ldots,e_n}$ form a basis of compact open sets for a Hausdorff topology on $\mathsf{T}$~\cite[Remark 4.2]{BCM20}.

Let $\mathbb{F}$ be the free group generated by $\mathscr{A}$ (identifying the identity of $\mathbb{F}$ with $\varepsilon$). For every $t\in \mathbb{F}$ there is a clopen set $V_t\subseteq \mathsf{T}$, which is compact if $t\neq\varepsilon,$ and a homeomorphism $\varphi_t\colon V_{t^{-1}}\to V_t$ such that
\begin{equation*} \label{eq:PartialAction}
	\varphi=(\{V_t\}_{t\in\mathbb{F}},\{\varphi_t\}_{t\in\mathbb{F}})
\end{equation*}
is a topological partial action of $\mathbb{F}$ on $\mathsf{T}$~\cite[Proposition 3.12]{CastrovWyk20}. In particular, $V_\varepsilon=\mathsf{T}$ and if
$\alpha,\beta\in \mathscr{L}^*$, then $V_\alpha=V_{(\alpha,r(\alpha),\alpha)}, V_{\alpha^{-1}}=V_{(\varepsilon,r(\alpha),\varepsilon)}$, and $V_{(\alpha\beta^{-1})^{-1}}= \varphi_{\beta^{-1}}^{-1}(V_{\alpha^{-1}})$, with $V_{(\alpha\beta^{-1})^{-1}}\neq\emptyset$ if and only if $r(\alpha)\cap r(\beta)\neq\emptyset$~\cite[Lemma 3.10]{CastrovWyk20}. If $V_t\neq \emptyset$, then $t=\alpha\beta^{-1}$ for some $\alpha,\beta\in \mathscr{L}^*$~\cite[Lemma 3.11(ii)]{CastrovWyk20}, in which case
\begin{equation*}
	V_{\alpha\beta^{-1}}=V_{(\alpha,r(\alpha)\cap r(\beta),\alpha)}
\end{equation*}
by~\cite[Lemma~4.3]{BCGW21}. If $\xi\in V_{\beta\alpha^{-1}}$ with associated word $\beta\gamma$, then the associated word of $\varphi_{\alpha\beta^{-1}}(\xi)$ is $\alpha\gamma$ and, for $0\leq n\leq|\gamma|$, we have that
\begin{equation*}
	\varphi_{\alpha\beta^{-1}}(\xi)_{|\alpha|+n}=\{A\cap r(\alpha\gamma_{1,n})\mid A\in\uparrow_{\mathscr{B}_{\gamma_{1,n}}}\xi_{|\beta|+n}\},
\end{equation*}
where $\uparrow_{\mathscr{B}_{\gamma_{1,n}}}\xi_{|\beta|+n}$ is the upper set of $\mathscr{B}_{\gamma_{1,n}}$ generated by $\xi_{|\beta|+n}$ 
\begin{equation*}
	\varphi_{\beta^{-1}}(\xi)_{n}=\uparrow_{\mathscr{B}_{\gamma_{1,n}}}\xi_{|\beta|+n}.
\end{equation*}

The transformation groupoid associated with the partial action  $\Phi=(\{V_t\}_{t\in\mathbb{F}},\{\phi_t\}_{t\in\mathbb{F}})$ is
\begin{equation}\label{eqn:Groupoid2}
	\mathscr{G}=\{(\xi,t,\eta)\in\mathsf{T}\times\mathbb{F}\times\mathsf{T} \mid \eta\in V_{t^{-1}}, \text{ and } \xi=\phi_t(\eta)\}
\end{equation}
with products and inverses defined by
\[ (\xi,s,\eta)(\eta,t,\rho)=(\xi,st,\rho) \text{ and }  (\xi,t,\eta)^{-1}=(\eta,t^{-1},\xi),\]
respectively. Endow $\mathscr{G}$ with the relative topology from the product topology on $\mathsf{T}\times\mathbb{F}\times\mathsf{T}$. The unit space $\mathscr{G}^0$ is identified with $\mathsf{T}$. By~\cite[Theorem 6.1]{BCGW23}, the Leavitt labeled graph algebra of $L_K(\mathscr{E})$ is isomorphic to $K\mathscr{G}$. The transformation groupoid $\mathscr{G}$ is isomorphic to a Deaconu-Renault groupoid associated with $\mathscr{E}$~\cite[Theorem 5.5]{CastrovWyk20}. Thus, all isotropy subgroups of $\mathscr{G}$ are subgroups of the integers.

The Leavitt path algebra of an acyclic graph is a direct limit of Leavitt path algebras of finite acyclic graphs~\cite[Proposition 1.6.15]{LeavittBook}, which implies the Leavitt path algebra is locally matricial~\cite[Proposition 2.6.20]{LeavittBook}. This fact is used to  prove regularity of Leavitt path algebras of acyclic graphs. The case of labeled spaces is more intricate. For example, a row-finite graph may produce two labeled spaces, one with a regular Leavitt labeled path algebra and the other with a non-regular Leavitt labeled path algebra, depending on the labeling and accommodating family chosen (see Example~\ref{e:diff_algs}). In addition, it is not known if, or under which conditions, Leavitt labeled path algebras are locally matricial. 

Using our groupoid characterization we obtain some regularity results, but not an equivalence. For graphs, being acyclic is equivalent to the boundary path groupoid being principal. In the transformation groupoid of a labeled space, a tight filter $\xi^\alpha$ has non-trivial isotropy in $\mathscr{G}$ if and only if there exist $\beta,\gamma\in\mathscr{L}^{\geq 1}$ such that $\alpha=\beta\gamma^{\infty}$ and for all $n\in\mathbb{N}\cup\{0\}$ and all $A\in\xi_{|\beta\gamma^n|}$ we have that $A\cap r(A,\gamma)\neq\emptyset$~\cite[Proposition 6.10]{CastrovWyk20}. The negation of this characterization then characterizes when the groupoid is principal. However, assuming the existence of a tight filter with associated word $\alpha=\beta\gamma^{\infty}$ such that there exists $n\in\mathbb{N}\cup\{0\}$ and  $A\in\xi_{|\beta\gamma^n|}$ with $A\cap r(A,\gamma)=\emptyset$ seems to be an obstacle to finding an equivalence between principality and regularity.
Nonetheless, we have the following results.

\begin{Thm}\label{t:labeled_reg_princ}
	Let $\mathscr{E}=(E,\mathscr{L},\mathscr{B})$ be normal labeled space and $K$ a field. If $L_K(\mathscr{E})$ is regular, then $\mathscr{G}_\mathscr{E}$ is principal.
\end{Thm}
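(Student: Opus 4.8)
The plan is to combine the isomorphism $L_K(\mathscr{E})\cong K\mathscr{G}$, where $\mathscr{G}$ is the transformation groupoid of the partial action $\Phi$ of $\mathbb{F}$ on the tight spectrum $\mathsf{T}$ (this is \cite[Theorem 6.1]{BCGW23}), with the necessity direction of Theorem~A, exactly as was done for boundary path groupoids in Theorem~\ref{Leavitt_reg}. Since $K$ is a field it is von Neumann regular, so condition (1) of Theorem~A is automatic and the content lies in conditions (2) and (3).

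First I would note that if $L_K(\mathscr{E})$ is regular, then so is $K\mathscr{G}$, and hence by Theorem~A the groupoid $\mathscr{G}$ is approximately quasi-compact. By Proposition~\ref{p:uniformly.bdd.vs.local.finite} this is equivalent to $\mathscr{G}$ being approximately uniformly bounded, and, as observed immediately after the definition of that notion, an approximately uniformly bounded ample groupoid has locally finite isotropy groups.

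Next I would invoke the fact recorded just before the statement: $\mathscr{G}$ is isomorphic to a Deaconu--Renault groupoid associated with $\mathscr{E}$ \cite[Theorem 5.5]{CastrovWyk20}, so every isotropy group of $\mathscr{G}$ embeds in $\mathbb{Z}$. A subgroup of $\mathbb{Z}$ is either trivial or infinite cyclic, and an infinite cyclic group is not locally finite (being finitely generated but infinite). Therefore local finiteness of the isotropy forces every isotropy group of $\mathscr{G}$ to be trivial, i.e.\ $\mathscr{G}=\mathscr{G}_\mathscr{E}$ is principal.

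There is essentially no obstacle here: the argument is the same three-line reduction used in Theorem~\ref{Leavitt_reg}, and condition (3) of Theorem~A is not even needed. The real subtlety of the labeled-space setting — that principality of $\mathscr{G}_\mathscr{E}$ is \emph{not} sufficient for regularity of $L_K(\mathscr{E})$, in contrast to the graph case — is what prevents upgrading this implication to an equivalence, and it is addressed separately via Example~\ref{e:diff_algs} and the partial-converse results that follow, rather than in this proof.
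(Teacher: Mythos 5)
Your argument is correct and is essentially identical to the paper's proof: both deduce local finiteness of the isotropy groups from Theorem~A and then observe that, since every isotropy group of $\mathscr{G}_\mathscr{E}$ is a subgroup of $\mathbb{Z}$ (via the Deaconu--Renault description), local finiteness forces all isotropy to be trivial. Your write-up merely makes explicit the intermediate step (approximately quasi-compact $\Rightarrow$ approximately uniformly bounded $\Rightarrow$ locally finite isotropy) that the paper leaves implicit.
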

\begin{proof}
	Suppose $L_K(\mathscr{E}) \cong K\mathscr{G}$ is regular. Then the isotropy subgroups of $\mathscr{G}$ are locally finite by Theorem~A. Since all isotropy subgroups of $\mathscr{G}$ are infinite cyclic, it follows that $\mathscr{G}$ is principal.
\end{proof}

\begin{Cor}\label{c:labeled_reg_princ}
	If $L_K(\mathscr{E})$ is regular, then for all $\xi^\alpha\in \mathsf{T}$, either $\alpha \neq \beta\gamma^{\infty}$ for any $\beta,\gamma\in \mathscr{L}^*$, or if $\alpha = \beta\gamma^{\infty}$ then there is $n\in \mathbb{N}$ and $A\in\xi_{|\beta\gamma^n|}$ such that $A\cap r(A,\gamma)=\emptyset$.
\end{Cor}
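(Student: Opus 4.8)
The plan is to read the corollary as the immediate logical consequence of Theorem~\ref{t:labeled_reg_princ} together with the isotropy description of~\cite[Proposition~6.10]{CastrovWyk20} recalled just before the statement. First I would apply Theorem~\ref{t:labeled_reg_princ}: regularity of $L_K(\mathscr{E})\cong K\mathscr{G}$ forces $\mathscr{G}=\mathscr{G}_\mathscr{E}$ to be principal, i.e., the isotropy group at every tight filter $\xi\in\mathsf{T}$ is trivial.

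Next I would unwind what principality says in the combinatorial language of tight filters. By~\cite[Proposition~6.10]{CastrovWyk20}, a tight filter $\xi^\alpha$ has nontrivial isotropy in $\mathscr{G}$ precisely when there exist $\beta\in\mathscr{L}^*$ and $\gamma\in\mathscr{L}^{\geq 1}$ with $\alpha=\beta\gamma^{\infty}$ such that $A\cap r(A,\gamma)\neq\emptyset$ for every $n\in\mathbb{N}\cup\{0\}$ and every $A\in\xi_{|\beta\gamma^n|}$. Since $\mathscr{G}$ is principal, no $\xi^\alpha\in\mathsf{T}$ can satisfy this condition, so negating the quantified statement yields exactly the stated alternative: for each $\xi^\alpha$, either $\alpha$ is not of the form $\beta\gamma^{\infty}$ at all, or else for every such decomposition there is some $n\in\mathbb{N}$ and some $A\in\xi_{|\beta\gamma^n|}$ with $A\cap r(A,\gamma)=\emptyset$.

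There is essentially no obstacle here; the only point requiring a word of care is the bookkeeping of quantifiers. The cited proposition is an existential statement over decompositions $\alpha=\beta\gamma^{\infty}$, so its negation must be read universally over all such decompositions of a given $\alpha$ --- which is precisely what the phrasing ``if $\alpha=\beta\gamma^{\infty}$ then \ldots'' in the corollary encodes. (It is harmless that $\gamma$ is quantified over $\mathscr{L}^*$ in the statement: when $\gamma=\varepsilon$ the expression $\beta\gamma^{\infty}$ is vacuous, so only $\gamma\in\mathscr{L}^{\geq 1}$ contributes, matching the hypothesis of the cited proposition.) No new construction is needed beyond these two inputs, and the whole argument is a purely formal unwinding.
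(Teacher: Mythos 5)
Your proposal is correct and matches the paper's own (very brief) proof exactly: apply Theorem~\ref{t:labeled_reg_princ} to get principality of $\mathscr{G}_\mathscr{E}$ and then negate the isotropy characterization from~\cite[Proposition 6.10]{CastrovWyk20}. Your extra care about the quantifier bookkeeping is a sound elaboration of the same argument.
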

\begin{proof}
	The result follows from Theorem~\ref{t:labeled_reg_princ} and the characterization of nontrivial isotropy in $\mathscr{G}_{\mathscr{E}}$~\cite[Proposition 6.10]{CastrovWyk20}.
\end{proof}

\begin{Thm}\label{t:labeled_reg}
	Let $\mathscr{E}=(E,\mathscr{L},\mathscr{B})$ be normal labeled space and $K$ a field. Suppose that if $\xi^\alpha\in \mathsf{T}$ and $\alpha \neq \beta\gamma^{\infty}$ for any $\beta,\gamma\in \mathscr{L}^{\geq 1}$. Then $L_K(\mathscr{E})$ is regular.
\end{Thm}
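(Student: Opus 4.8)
The plan is to deduce the result from Theorem~A through the isomorphism $L_K(\mathscr{E})\cong K\mathscr{G}_\mathscr{E}$ of~\cite[Theorem~6.1]{BCGW23}, where $\mathscr{G}_\mathscr{E}$ is the transformation groupoid of the partial action on the tight spectrum $\mathsf{T}$ (a Deaconu-Renault groupoid by~\cite[Theorem~5.5]{CastrovWyk20}). Since $K$ is a field it is von Neumann regular, so condition~(1) of Theorem~A holds. By~\cite[Proposition~6.10]{CastrovWyk20} a tight filter $\xi^{\alpha}$ can have nontrivial isotropy in $\mathscr{G}_\mathscr{E}$ only when $\alpha=\beta\gamma^{\infty}$ for some $\beta,\gamma\in\mathscr{L}^{\geq 1}$, which the hypothesis forbids; hence $\mathscr{G}_\mathscr{E}$ is principal, so condition~(3) is vacuous. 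Thus the whole problem reduces to verifying condition~(2): that $\mathscr{G}_\mathscr{E}$ is approximately quasi-compact.

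For this I would use Proposition~\ref{p:uniformly.bdd.vs.local.finite} and write $\mathscr{G}_\mathscr{E}$ as a directed union of quasi-compact open subgroupoids, in the spirit of the acyclic-graph case in Theorem~\ref{Leavitt_reg}. The building blocks are extracted from the inverse semigroup $S_\mathscr{E}$: given a finite subalphabet $\mathscr{A}_{0}\subseteq\mathscr{A}$ and finitely many elements $p_{A_{1}},\dots,p_{A_{k}}$, one takes the union of the compact open bisections attached to those $(\alpha,A,\beta)\in S_\mathscr{E}$ that occur in products of the corresponding generators, i.e.\ with $\alpha,\beta$ labeled paths over $\mathscr{A}_{0}$ and $A$ obtained from $A_{1},\dots,A_{k}$ by finite intersections and relative ranges along words over $\mathscr{A}_{0}$. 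The multiplication formulas in $S_\mathscr{E}$ show such a union is a subgroupoid; the content is that it is \emph{quasi-compact}, equivalently that only finitely many of these bisections are nonzero. Here the hypothesis does the work: if arbitrarily long labeled paths over $\mathscr{A}_{0}$ appeared, then, since the basic compact open sets $V_{(\alpha,r(\alpha),\alpha)}$ with $|\alpha|\geq 1$ are compact and nest along a branch of the (prefix-ordered) tree of labeled paths over $\mathscr{A}_{0}$, a compactness argument (K\"onig's lemma, together with the fact that a nested family of nonempty compact open subsets of $\mathsf{T}$ has nonempty intersection) produces a tight filter $\xi^{\alpha}$ with $\alpha$ an infinite word over the finite alphabet $\mathscr{A}_{0}$, and a pigeonhole argument on the finitely many states (each being a letter of $\mathscr{A}_{0}$ together with a filter in the finite Boolean subalgebra generated by the finitely many relative ranges that arise) forces $\alpha$ to be eventually periodic, contradicting the hypothesis. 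Bounding the word lengths in turn bounds the admissible $\mathscr{B}$-data, so each block is a genuine quasi-compact open subgroupoid; these form a directed family with union $\mathscr{G}_\mathscr{E}$, and Theorem~A gives that $K\mathscr{G}_\mathscr{E}\cong L_K(\mathscr{E})$ is von Neumann regular.

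The technical heart — and the expected main obstacle — is precisely this finiteness step. The delicacy is that the ``no eventually periodic word'' hypothesis is a statement about labeled paths, while the size of $S_\mathscr{E}$ (and hence of $\Gamma_{c}(\mathscr{G}_\mathscr{E})$) also records the accommodating family $\mathscr{B}$; one must verify that the $\mathscr{B}$-coordinates of the elements produced from a fixed finite set of generators remain in a single finite Boolean subalgebra, so that controlling word lengths really controls everything, and in particular that the interplay with $\mathscr{B}$ cannot reintroduce unboundedly long words. Granting this, $\mathscr{G}_\mathscr{E}$ is approximately uniformly bounded, hence approximately quasi-compact by Proposition~\ref{p:uniformly.bdd.vs.local.finite}, and the theorem follows from Theorem~A.
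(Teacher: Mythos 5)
Your reduction to Theorem~A is exactly right: $K$ is regular, \cite[Proposition~6.10]{CastrovWyk20} together with the hypothesis gives that $\mathscr{G}_\mathscr{E}$ is principal, and everything comes down to showing $\mathscr{G}_\mathscr{E}$ is approximately quasi-compact. But the finiteness step that you yourself flag as the technical heart does not work as written. From arbitrarily long labeled paths over a finite subalphabet $\mathscr{A}_0$, K\"onig's lemma does produce an infinite word $\alpha$ over $\mathscr{A}_0$ all of whose beginnings are labeled paths; however, the pigeonhole argument on finitely many ``states'' only shows that some state recurs infinitely often --- it does not force $\alpha$ to be eventually periodic, because a state may have several admissible successors and the word can continue differently after each return to the same state. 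An infinite word over a finite alphabet is in general not eventually periodic, so no contradiction with the hypothesis is obtained, and the quasi-compactness of your blocks is not established. The same problem infects the $\mathscr{B}$-coordinate: without a prior bound on word lengths you cannot confine the iterated relative ranges to a single finite Boolean subalgebra, so the worry you raise at the end is not resolved by your argument.

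The paper's proof sidesteps this by indexing the blocks by finite \emph{subgraphs} $F$ of $E$ rather than by finite subalphabets. Setting $\mathscr{L}_F=\mathscr{L}|_{F^1}$ and $\mathscr{B}(F)=\mathscr{B}\cap\mathscr{P}(F^0)$, the family $\mathscr{B}(F)$ is automatically finite because $F^0$ is, and $\mathscr{L}^*_F$ is finite because an arbitrarily long path in the finite graph $F$ must traverse a cycle, which produces a word of the form $\beta\gamma^{\infty}$ --- exactly what the hypothesis excludes. In other words, pigeonhole on the \emph{vertices} of a finite graph does yield genuine eventual periodicity, whereas pigeonhole on letters (or on letter-plus-filter states) does not. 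With $|\mathscr{L}^*_F|<\infty$ and $|E(S_{\mathscr{F}})|<\infty$ in hand, the set $\mathscr{G}(F)$ built from the corresponding basic bisections is a finite union of compact open sets, is verified to be a subgroupoid using the filter/labeled-path correspondence, and $\mathscr{G}_\mathscr{E}$ is the directed union of the $\mathscr{G}(F)$. To salvage your approach you would need to replace ``finite subalphabet plus finitely many $p_{A_i}$'' by data that confines all the relevant paths to a finite subgraph of $E$.
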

\begin{proof}
	Let $F$ be a finite subgraph of $E$. Let $\mathscr{L}_F\colon F^1\to \mathscr{A}_F$ denote the restriction of $\mathscr{L}$ to $F^1\subset E^1$, where $\mathscr{A}_F = \mathscr{L}(F^1)\subset \mathscr{A}$. Then $(F,\mathscr{L}_F)$ is a labeled subgraph. Let $\mathscr{B}(F) = \mathscr{B}\cap \mathscr{P}(F^0)$. Then $\mathscr{F}=(F,\mathscr{L}_F,\mathscr{B}(F))$ is a normal labeled space, which is set-finite in the sense that $|\mathscr{L}_F(A F^1)|< \infty$ for all $A\in \mathscr{B}(F)$. Notice that since we assume  $\alpha \neq \beta\gamma^{\infty}$ for any $\beta,\gamma\in \mathscr{L}^*$, it follows that $|\mathscr{L}^*_F|<\infty$. And, since $F^0$ is finite, it follows that $|\mathscr{B}(F)|<\infty$. Hence, $|E(S_\mathscr{F})|<\infty$.
	Let
	\begin{align*}
		\mathscr{G}(F) = \bigcup \{ &  \left( V_{e\colon e_1,\ldots,e_m}  \times\{\alpha\beta^{-1}\}\times
		V_{f\colon f_1,\ldots,f_n} \right)   \cap \mathscr{G}_\mathscr{E} \mid  \\
		& e,e_i,f,f_j\in E(S_\mathscr{F}), \text{ and } \alpha,\beta\in \mathscr{L}^*_F \}.
	\end{align*}

	We claim that $\mathscr{G}(F)$ is a compact open subgroupoid of $\mathscr{G}_\mathscr{E}$. Notice that $E(S_\mathscr{F})\subseteq E(S_\mathscr{E})$, $\mathscr{B}_F\subseteq \mathscr{B}$, and $\{\alpha\beta^{-1}\mid \alpha,\beta\in \mathscr{L}^*_F\} \subseteq \mathbb{F}$ (where $\mathbb{F}$ is the free group generated by $\mathscr{A}$). Thus, since each $V_{e\colon e_1,\ldots,e_m}$ is  compact open in $\mathsf{T}$, it follows that $\mathscr{G}(F)$ is an open subset of $\mathscr{G}_\mathscr{E}$.  It is clear that $\mathscr{G}(F)$ is closed under inverses.
	
	We show that $\mathscr{G}(F)$ is closed under taking products. Let $(\xi,s,\eta),(\eta,t,\rho)\in \mathscr{G}(F)$ with the product $(\xi,s,\eta)(\eta,t,\rho)$ defined in $\mathscr{G}_\mathscr{E}$. Then there are $p,q,m,n\in \mathbb{N}\cup\{0\}$ and $d,d_1,\ldots,d_p, e,e_1,\ldots,e_m, f,f_1,\ldots,f_n, g,g_1,\ldots,g_q\in E(S_\mathscr{F})$ such that
	\[(\xi,s,\eta)\in V_{d\colon d_1,\ldots,d_p}\times \{s\}\times \times V_{e\colon e_1,\ldots,e_m}, \text{ and } \]
	\[(\eta,t,\rho) \in V_{f\colon f_1,\ldots,f_n}\times \{t\}\times \times V_{g\colon g_1,\ldots,g_q}.\]
	Suppose $e=(\alpha, A,\alpha)$ and $f=(\beta,B,\beta)$ with $\alpha,\beta\in \mathscr{L}_F^*\subset \mathscr{L}^*$ and $A,B\in\mathscr{B}_F$. Hence, $\eta=\eta^{\alpha\gamma} =\eta^{\beta\delta}$, for some $\alpha\gamma,\beta\delta \in\overline{\mathscr{L}^{\leq \infty}}$ with $A\in \eta_{|\alpha|}$ and $B\in \eta_{|\beta|}$. The one-to-one correspondence of Theorem~\ref{thm.filters.in.E(S)} between tight filters in $\mathsf{T}$ and pairs consisting of a labeled path and a complete family of filters implies that $\alpha\gamma=\beta\delta$. Hence, $\alpha=\beta\alpha'$ or $\beta=\alpha\beta'$.
	
	Suppose first that $\alpha=\beta\alpha'$. Then $\beta\delta = \beta\alpha'\gamma$. By~\cite[Lemma 5.3]{CastrovWyk20}, there are $\tau,\nu\in  \mathscr{L}_F^*$ such that
	\[(\xi,s,\eta) =  (\xi^{\tau\gamma},\tau\alpha^{-1},\eta^{\alpha\gamma})=(\xi^{\tau\gamma},\tau(\beta\alpha')^{-1},\eta^{\beta\alpha'\gamma})\]
	and
	\[(\eta,t,\rho) = (\eta^{\beta\delta}, \beta\nu^{-1}, \rho^{\nu\delta}) = (\eta^{\beta\alpha'\gamma}, \beta\alpha'(\nu\alpha')^{-1}, \rho^{\nu\alpha'\gamma}).\]
	Thus,
	\begin{eqnarray}\label{e:labeled_open_subgrpd}
		(\xi,s,\eta)(\eta,t,\rho)  &=& (\xi^{\tau\gamma},\tau(\nu\alpha')^{-1}, \rho^{\nu\alpha'\gamma}) \nonumber\\
									&\in&  V_{d\colon d_1,\ldots,d_p}\times\{\tau(\nu\alpha')^{-1}\} \times V_{g\colon g_1,\ldots,g_q}.
	\end{eqnarray}
	
	On the other hand, if $\beta=\alpha\beta'$, then $\alpha\gamma = \alpha\beta'\delta$. Similarly to above, we see that
	\[(\xi,s,\eta)(\eta,t,\rho)  = (\xi^{\tau\beta'\delta},\tau\beta' \nu^{-1},\rho^{\nu\delta})\in  V_{d\colon d_1,\ldots,d_p}\times\{\tau\beta' \nu^{-1}\} \times V_{g\colon g_1,\ldots,g_q}. \]
	Hence $\mathscr{G}(F)$ is closed under multiplication from $\mathscr{G}_\mathscr{E}$, and thus an open subgroupoid of $\mathscr{G}_\mathscr{E}$.
	
	Since $|\mathscr{L}^*_F|<\infty$, $|E(S_\mathscr{F})|<\infty$ and $V_{e\colon e_1,\ldots,e_m}$ is compact for all $e,e_i\in E(S_\mathscr{F})$, it follows that $\mathscr{G}(F)$ is a finite union of compact sets, and therefore itself compact. Then, $\mathscr{G}_{\mathscr{E}}$ is a directed union of $\mathscr{G}(F)$, ranging over all finite subgraphs $F$ of $E$, and we conclude that $\mathscr{G}_{\mathscr{E}}$ is approximately quasi-compact.

	Notice that the assumption $\alpha \neq \beta\gamma^{\infty}$ for any $\beta,\gamma\in \mathscr{L}^{\geq 1}$ implies that all isotropy in $\mathscr{G}_{\mathscr{E}}$ are trivial~\cite[Proposition 6.10]{CastrovWyk20}. Thus, the order of no isotropy element is divisible by the characteristic of $K$. Hence, $L_K(\mathscr{E})\cong K\mathscr{G}_{\mathscr{E}}$ is regular by Theorem~A.
\end{proof}

\begin{Example}\label{e:diff_algs}
	We give an example that illustrates the significant role that the labeling and the accommodating family plays in the algebraic structure of algebras associated with labeled spaces. Let $E$ be the graph with vertices $E^0= \{v_i\mid i\in \mathbb{Z}\}$ and edges $E^1=\{e_i\mid  i\in \mathbb{Z}\}$ such that $r(e_i)=s(e_{i+1})$ for all $i\in \mathbb{Z}$. $E$ is an acyclic row-finite graph. Let $\mathscr{A} = \{a\}$ and let $\mathscr{L}(e_i)=a$ for all $i\in\mathbb{Z}$. Let $\mathscr{B}$ be the Boolean algebra generated by all finite and cofinite subsets of $E^0$. Then $\mathscr{B}$ is an accommodating family for the labeled graph $(E,\mathscr{L})$ and $\mathscr{E} = (E,\mathscr{L},\mathscr{B})$ is a normal labeled space. Here, $\mathscr{L}^*=\{a^n\mid  n\in \mathbb{N}\}\cup \{\epsilon\}$ and, therefore, $a^\infty\in \overline{\mathscr{L}^{\leq \infty}}$. This implies that $\mathscr{G}_{\mathscr{E}}$ is not principal and, therefore,  $L_K(\mathscr{E})$ is not regular by Theorem~\ref{t:labeled_reg_princ}. The underlying graph, however, is acyclic and thus its Leavitt path algebra $L_K(E)$ is regular by Theorem~\ref{Leavitt_reg}.
	
	Now let $\mathscr{A}' = E^1$, let $\mathscr{C}$ be the set of all finite subsets of $E^0$ and let $\mathscr{K}\colon E^1\to\mathscr{A}'$ be the identity map. Then, $\mathscr{E}'=(E,\mathscr{K}, \mathscr{C})$ is a normal label space. In this instance, $L_K(\mathscr{E}') \cong L_K(E)$ (by~\cite[Example 7.1]{BCGW23}) and  is, therefore, regular.
\end{Example}

\begin{Rmk}
	Theorem~\ref{t:labeled_reg} gives a partial converse to Corollary~\ref{c:labeled_reg_princ} and, thus, to Theorem~\ref{t:labeled_reg_princ}. However, we don't see how to show that the property ``if $\alpha\neq \beta\gamma^{\infty}$, then there exists $n\in\mathbb{N}\cup\{0\}$ and  $A\in\xi_{|\beta\gamma^n|}$ such that $A\cap r(A,\gamma)=\emptyset$'' implies regularity. In fact, we do not even have an example of a labeled space that exhibits this property, nor do we know whether it influences regularity of $L_K(\mathscr{E})$.
\end{Rmk}

Every Leavitt labeled path algebra $L_R(\mathscr{E})$ is $\mathbb{Z}$-graded, with homogeneous components given by
\[L_R(\mathscr{E})_n = \mathrm{span}_K\{s_\alpha p_A s_\beta^* \mid \alpha,\beta\in\mathscr{L}^*,\ A\in\mathscr{B}_{\alpha}\cap\mathscr{B}_{\beta} \ \mbox{and} \ |\alpha|-|\beta|=n\}\]
for $n\in \mathbb{Z}$~\cite[Proposition 3.8]{BCGW23}.

\begin{Thm}\label{labeled_gr_regular}
	Let $\mathscr{E}=(E,\mathscr{L},\mathscr{B})$ be normal labeled space and $K$ a field. Then the Leavitt labeled path algebra $L_K(\mathscr{E})$ is graded regular.
\end{Thm}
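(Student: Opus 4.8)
The plan is to pass to the transformation groupoid and combine Theorem~\ref{gr_regular} with Theorem~A, following the pattern of the proof of Theorem~\ref{Leavitt_graded_regular}. Recall that $L_K(\mathscr{E})\cong K\mathscr{G}_\mathscr{E}$, where $\mathscr{G}_\mathscr{E}$ is the transformation groupoid of the partial action $\Phi$ of the free group $\mathbb{F}$ on $\mathscr{A}$ acting on the tight spectrum $\mathsf{T}$, and that under this isomorphism the $\mathbb{Z}$-grading of $L_K(\mathscr{E})$ corresponds to the one induced by the continuous cocycle $c\colon\mathscr{G}_\mathscr{E}\to\mathbb{Z}$ obtained by composing $(\xi,t,\eta)\mapsto t$ with the length homomorphism $\mathbb{F}\to\mathbb{Z}$; explicitly $c(\xi,\alpha\beta^{-1},\eta)=|\alpha|-|\beta|$ for $\alpha,\beta\in\mathscr{L}^*$. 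By Theorem~\ref{gr_regular}, $K\mathscr{G}_\mathscr{E}$ is graded regular if and only if $K(\mathscr{G}_\mathscr{E})_0$ is regular, where $(\mathscr{G}_\mathscr{E})_0=c^{-1}(0)$ is the clopen subgroupoid of degree-zero elements. Since every isotropy group of $\mathscr{G}_\mathscr{E}$ embeds in $\mathbb{Z}$ via $c$, a degree-zero isotropy element must be a unit, so $(\mathscr{G}_\mathscr{E})_0$ is principal; hence, by Theorem~A, it remains only to check that $K$ is regular --- which holds since $K$ is a field --- and that $(\mathscr{G}_\mathscr{E})_0$ is approximately quasi-compact.

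To prove the latter I would exhibit $(\mathscr{G}_\mathscr{E})_0$ as a directed union of compact open subgroupoids, combining the truncation idea in the proof of Theorem~\ref{Leavitt_graded_regular} with the construction of the subgroupoid $\mathscr{G}(F)$ in the proof of Theorem~\ref{t:labeled_reg}. Given a finite subgraph $F$ of $E$, a finite Boolean subalgebra $\mathscr{B}_0\subseteq\mathscr{B}$, and $n\geq 0$, I would let $\mathscr{G}_0(\mathscr{B}_0,F,n)$ be the union of all compact open bisections $\bigl(V_{e\colon e_1,\ldots,e_m}\times\{\alpha\beta^{-1}\}\times V_{f\colon f_1,\ldots,f_k}\bigr)\cap\mathscr{G}_\mathscr{E}$ in which the labeled paths $\alpha,\beta$ and the first components of the idempotents $e,e_i,f,f_j\in E(S_\mathscr{E})$ are words over the finite alphabet $\mathscr{L}(F^1)$ of length at most $n$, the second components of these idempotents lie in $\mathscr{B}_0$, and $|\alpha|=|\beta|$. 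There are then only finitely many admissible $\alpha,\beta$ and idempotents, so $\mathscr{G}_0(\mathscr{B}_0,F,n)$ is a finite union of compact open subsets of $\mathscr{G}_\mathscr{E}$, hence compact and open; it is visibly contained in $c^{-1}(0)$, closed under inversion, and monotone in $(\mathscr{B}_0,F,n)$. Any $(\xi,\alpha\beta^{-1},\eta)\in(\mathscr{G}_\mathscr{E})_0$ has $|\alpha|=|\beta|$, involves only finitely many letters (each a label of some edge of $E$) and finitely many members of $\mathscr{B}$, and lies in a basic compact open bisection cut out by finitely many idempotents, so enlarging $F$, $\mathscr{B}_0$ and $n$ appropriately shows $(\mathscr{G}_\mathscr{E})_0=\bigcup \mathscr{G}_0(\mathscr{B}_0,F,n)$.

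The substantial point is closure of $\mathscr{G}_0(\mathscr{B}_0,F,n)$ under the partial product of $\mathscr{G}_\mathscr{E}$, and this is where I expect the main difficulty. Given composable $(\xi,s,\eta),(\eta,t,\rho)\in\mathscr{G}_0(\mathscr{B}_0,F,n)$, I would put them in the canonical form of~\cite[Lemma~5.3]{CastrovWyk20} and use the bijection of Theorem~\ref{thm.filters.in.E(S)} to see that the two relevant labeled paths are comparable, splitting into the cases $\alpha=\beta\alpha'$ and $\beta=\alpha\beta'$ exactly as in the proof of Theorem~\ref{t:labeled_reg}; the product then lies in $V_{d\colon d_1,\ldots,d_p}\times\{\cdot\}\times V_{g\colon g_1,\ldots,g_q}$ with the idempotent data $d,d_i,g,g_j$ simply inherited from the two factors (hence still admissible), and with shift datum $\tau(\nu\alpha')^{-1}$ in the first case and $\tau\beta'\nu^{-1}$ in the second. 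The extra bookkeeping, absent in Theorem~\ref{t:labeled_reg}, is the length count: the degree-zero hypotheses give $|\tau|=|\alpha|$, $|\nu|=|\beta|$ and, in the first case, $|\alpha|=|\beta|+|\alpha'|$, whence $|\nu\alpha'|=|\nu|+|\alpha'|=|\beta|+|\alpha'|=|\alpha|=|\tau|\leq n$, so the product again has numerator and denominator of equal length at most $n$ over $\mathscr{L}(F^1)$; the case $\beta=\alpha\beta'$ is symmetric. This is the precise analogue of the length arithmetic underlying~\eqref{eq:mult.graph} for ordinary graphs. It follows that $\mathscr{G}_0(\mathscr{B}_0,F,n)$ is a subgroupoid, so $(\mathscr{G}_\mathscr{E})_0$ is a directed union of compact open subgroupoids and therefore approximately quasi-compact. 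Theorem~A now gives that $K(\mathscr{G}_\mathscr{E})_0$ is regular, and Theorem~\ref{gr_regular} concludes that $L_K(\mathscr{E})\cong K\mathscr{G}_\mathscr{E}$ is graded regular.
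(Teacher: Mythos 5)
Your proposal is correct and follows essentially the same route as the paper: reduce via Theorem~\ref{gr_regular} to regularity of $K(\mathscr{G}_\mathscr{E})_0$, observe that the degree-zero subgroupoid is principal, and exhibit it as a directed union of compact open subgroupoids indexed by finite subgraphs and a truncation length, reusing the product computation from the proof of Theorem~\ref{t:labeled_reg}. The only differences are cosmetic bookkeeping --- the paper takes $\mathscr{B}(F)=\mathscr{B}\cap\mathscr{P}(F^0)$ in place of your auxiliary finite Boolean subalgebra $\mathscr{B}_0$, and restricts to single basic sets $V_{(\alpha,A,\alpha)}$ rather than general $V_{e\colon e_1,\ldots,e_m}$ --- and your explicit length count for closure under products is exactly the arithmetic the paper leaves implicit.
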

\begin{proof}
To ease notation, let $\mathscr{G}=\mathscr{G}_\mathscr{E}$ denote the transformation groupoid of the partial action associated with $\mathscr{E}$. The map $c\colon (\xi^{\alpha\gamma},\alpha\beta^{-1},\eta^{\beta\gamma})\mapsto |\alpha|-|\beta|$ is a continuous cocycle $c\colon \mathscr{G}\to\mathbb{Z}$ that induces the $\mathbb{Z}$-grading  on $L_R(\mathscr{E})$ via the isomorphism $L_K(\mathscr{E})\cong K\mathscr{G}$. Then
\[\mathscr{G}_0 = \{(\eta^{\alpha\gamma}, \alpha\beta^{-1},\eta^{\beta\gamma})\mid \alpha,\beta\in\mathscr{L}^*,\,r(\alpha)\cap r(\beta)\neq \emptyset, |\alpha|=|\beta|\}.\]

If $(\eta, t, \eta)\in \mathscr{G}_0$ is an isotropy group element, then $\eta = \eta^{\alpha\gamma^\infty}$, for some $\alpha\in\mathscr{L}^*$ and $\gamma\in\mathscr{L}^{\geq 1}$, and either $t=\alpha\gamma\alpha^{-1}$ or $t=\alpha\gamma^{-1}\alpha^{-1}$~\cite[Remark 8.2]{BCGW23}. Without loss of generality, suppose $t=\alpha\gamma\alpha^{-1}$. Then, $c(\eta, t, \eta) = |\gamma|=0$, which implies that $t=\varepsilon$. Hence $\mathscr{G}_0$ is has trivial isotropy groups.

Let $F$ be a finite subgraph of $E$, let $\mathscr{L}_F\colon F^1\to \mathscr{A}$ be the restriction of $\mathscr{L}$ to $F^1\subset E^1$, and let $\mathscr{B}(F) = \mathscr{B}\cap \mathscr{P}(F^0)$. Then $\mathscr{F}=(F,\mathscr{L}_F,\mathscr{B}(F))$ is a normal labeled space.
For $n\geq 0$, let
\[\mathscr{G}_0(F,n) = \bigcup_{\substack{
		\alpha,\beta\in\mathscr{L}_F^* \\
		A\in\mathscr{B}(F)\\
		A\cap r(\alpha)\cap r(\beta)\neq\emptyset \\
		|\alpha|=|\beta|\leq n}}
		V_{(\alpha,A,\alpha)}\times\{\alpha\beta^{-1}\}\times V_{(\beta,A,\beta)} \]

Then $\mathscr{G}_0(F,n)$ is an open subgroupoid of $\mathscr{G}_0$ by (\ref{e:labeled_open_subgrpd}). Moreover, given $n$ and that $F$ is finite, it follows that there are only finitely many choices of $A$, $\alpha$, $\beta$ and $B$. 
Thus, $\mathscr{G}_0(F,n)$ is compact, as it is a finite union of compact sets.

If $F\subseteq F'$ and $n\leq n'$, then 
$\mathscr G_0(F,n)\subseteq \mathscr G_0(F',n')$.  Thus,  $\mathscr G_0$ is approximately quasi-compact. By Theorem~\ref{t:graded.vnr}, $L_K(\mathscr{E})\cong K\mathscr{G}_\mathscr{E}$ is graded regular.
\end{proof}

\subsection{Deaconu-Renault groupoids}
A monoid $P$ is called \emph{left reversible} (or right Ore) if $aP\cap bP\neq\emptyset$ for all $a,b\in P$.  If $P$ is cancellative and left reversible, then it embeds in its group of fractions $G$.  One can construct $G$ as $(P\times P)/{\sim}$ where $(p,q)\sim (p',q')$ if the exist $x,y\in P$ with $(px,qx)=(p'y,q'y)$.  One writes $pq\inv$ for the class of $(p,q)$ and the product is given by $pq\inv rs\inv = px(sy)\inv$ where $x,y\in P$ with $qx=ry$.  The embedding of $P$ into $G$ is $p\mapsto pe\inv$, which we write as $p$ from now on.  Note that $G=PP\inv$.

Following~\cite{RenaultWilliams}, we consider right actions of $P$ on a space $X$ by partial local homeomorphisms.  So we have a partially defined mapping $X\times P\to X$ (written $(x,p)\mapsto xp$) such that if $U_p$ is the set of $x\in X$ with $xp$ defined and $V_p=\{xp\mid x\in U_p\}$, then $U_p$ and $V_p$ are open, and the following hold:
\begin{enumerate}
  \item  $x\mapsto xp$ is a local homeomorphism $U_p\to V_p$ for all $p\in P$.
  \item  $U_e=X$ and $xe=x$ for all $x\in X$.
  \item  $x\in U_{pq}$ if and only if $x\in U_p$ and $xp\in U_q$, in which case $(xp)q=x(pq)$.
\end{enumerate}
The action is said to be \emph{directed} if, for all $p,q\in P$ with $U_p\cap U_q\neq\emptyset$, there is $r\in pP\cap qP$ with $U_r=U_p\cap U_q$.  Notice that left reversibility is equivalent to this condition if $U_p=X$ for all $p\in P$.  More generally, if $F\subseteq P$, then $F$ is said to be \emph{action directed} if given $p,q\in F$ with $U_p\cap U_q\neq \emptyset$, there is $r\in F\cap pP\cap qP$ with $U_p\cap U_q=U_r$.  It is shown in~\cite[Claim~3]{RenaultWilliams} that every finite subset of $P$ is contained in an action-directed finite subset of $P$.

From now on we assume that $X$ is locally compact, Hausdorff and totally disconnected.

For example, if $E$ is a directed graph, then there is an action of $\mathbb N$ on $\partial E$ by local homeomorphisms by putting $xn=\sigma^n(x)$ where $\sigma$ is the shift map.  Here $U_n =\partial E^{\geq n}$.  The action-directed condition is trivial since $r$ can be taken to be the maximum of $p,q$.

The \emph{Deaconu-Renault groupoid} associated to the action of $P$ on $X$ is
\[\mathscr G(X,P) = \{(x,g,y)\in X\times G\times Y\mid \exists p,q\in P, xp=yq, g=pq\inv\}.\]
A pair $(x,g,y),(x',g',y')$ is composable if and only if $y=x'$, in which case $(x,g,y)(x',g',y')=(x,gg',y')$.  The inversion is given by $(x,g,y)\inv = (y,g\inv ,x)$.  The unit space is $\mathscr G(X,P)\skel 0=\{(x,e,x)\mid x\in X\}$, which we can identify with $X$.

A basis for the topology on $\mathscr G(X,P)$ consists of all sets of the form $D(U,p,q,V)=\{(x,pq\inv, y)\mid x\in U,y\in V,xp=yq\}$ with $U$ and $V$ open subsets of $X$.  Such a set is a compact open bisection if $U\subseteq U_p$, $V\subseteq V_q$ are compact open, $p$ is injective on $U$, $q$ is injective on $V$ and $Up=Vq$.  These compact open bisections form a basis for the topology, and so the groupoid is ample.  The topology on $\mathscr G(X,P)$ is finer than the product topology on $X\times G\times X$ but coincides with the product topology on sets of the form $D(U,p,q,V)$ (see~\cite[Lemma~5.11]{RenaultWilliams}).  The groupoid $\mathscr G(X,P)$ is Hausdorff.
There is a continuous cocycle $c\colon \mathscr G(X,P)\to G$ given by $c(x,g,y)=g$ (see~\cite[Proposition~5.12]{RenaultWilliams}).

Notice that the boundary path groupoid $\mathscr G_E$ for a graph $E$ is a Deaconu-Renault groupoid for the action of $\mathbb N$ via the shift map.  Similarly, the boundary path groupoid of a higher rank graph (or $k$-graph) is a Deanconu-Renault groupoid for an action of $\mathbb N^k$ on the boundary path space~\cite{RenaultWilliams}.

Our goal is to show that $\mathscr G(X,P)$ is graded regular with respect to the $G$-grading coming from the canonical cocycle provided that each set $U_g$ is clopen.  This occurs in particular if each $U_g=X$.  It also happens in the case of row-finite graphs and row-finite higher rank graphs.  In this setting, the finite paths in the boundary path space are isolated points.  Since every infinite path is in the domain of any shift map, it follows that the domains are clopen in the row-finite case.  The following result recovers graded regularity for Leavitt path algebras of row-finite graphs and also establishes graded regularity for row-finite higher ranks graphs.

\begin{Thm}\label{t:dr.graded}
Let $R$ be a regular unital ring and let $\mathscr G(X,P)$ be the Deaconu-Renault groupoid of the directed action of a cancellative left reversible monoid $P$ on a locally compact, Hausdorff and totally disconnected space $X$ by partial local homeomorphisms.  Assume that the domain $U_p$ of the action of $p$ is clopen for all $p\in P$.  Then $R\mathscr G(X,P)$ is graded regular with respect to the $G$-grading coming from the canonical cocycle, where $G$ is the group of fractions of $P$.
\end{Thm}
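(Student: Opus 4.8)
The plan is to invoke Theorem~\ref{t:graded.vnr} for the $G$-grading of $R\mathscr G(X,P)$ induced by the canonical (continuous) cocycle $c(x,g,y)=g$. Since $R$ is regular by hypothesis, everything reduces to showing that $\mathscr G(X,P)_e=c^{-1}(e)$ is approximately quasi-compact and that the orders of finite subgroups of its isotropy groups are invertible in $R$. The second condition is vacuous: writing $\mathscr G$ for $\mathscr G(X,P)$, every element of $\mathscr G_e$ has the form $(x,e,y)$, and $(x,e,y)\in\mathscr G$ holds if and only if $xp=yp$ for some $p\in P$ (since $pq\inv=e$ forces $p=q$ in $G$); in particular an isotropy element of $\mathscr G_e$ at $x$ is a triple $(x,g,x)$ with $g=c(x,g,x)=e$, i.e.\ the unit at $x$, so $\mathscr G_e$ is principal. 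Thus the whole content is to prove that $\mathscr G_e$ is approximately quasi-compact.

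For $p\in P$ let $R_p=\{(x,e,y)\mid x,y\in U_p,\ xp=yp\}\subseteq\mathscr G_e$, the kernel equivalence relation of the local homeomorphism $p\colon U_p\to V_p$ viewed inside $\mathscr G$. Given a compact open $K\subseteq X$ and a finite action-directed subset $F\subseteq P$ (recall from~\cite[Claim~3]{RenaultWilliams} that every finite subset of $P$ lies in such an $F$), I would set
\[
\mathscr G(K,F)=\bigcup_{p\in F} R_p\big|_{K\cap U_p}.
\]
Here $K\cap U_p$ is compact open, which is exactly where the hypothesis that the domains $U_p$ are clopen is used. Since $p$ is a local homeomorphism, $K\cap U_p$ is covered by finitely many compact open sets on which $p$ is injective, and pairing these up and intersecting their images one writes $R_p|_{K\cap U_p}$ as a finite union of basic compact open bisections $D(U',p,p,V')$; hence each $R_p|_{K\cap U_p}$, and therefore $\mathscr G(K,F)$, is quasi-compact and open in $\mathscr G$.

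Next I would check that $\mathscr G(K,F)$ is a subgroupoid. Each $R_p|_{K\cap U_p}$ is an equivalence relation on $K\cap U_p$, so in particular is closed under inversion; closure of $\mathscr G(K,F)$ under multiplication is the step that uses the action-directed hypothesis. If $(x,e,y)\in R_p|_{K\cap U_p}$ and $(y,e,z)\in R_q|_{K\cap U_q}$, then $y\in U_p\cap U_q\neq\emptyset$, so there is $r\in F\cap pP\cap qP$ with $U_r=U_p\cap U_q$, say $r=ps=qt$; using $xp=yp$, $zq=yq$ and the axioms of a partial action one verifies that $x,z\in U_r$ and $xr=yr=zr$, so the product $(x,e,z)$ lies in $R_r|_{K\cap U_r}\subseteq\mathscr G(K,F)$. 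Finally the family $\{\mathscr G(K,F)\}$ is directed — replace two indices by the union of the compact open sets and a finite action-directed set containing both monoid parts — and its union is all of $\mathscr G_e$, since any $(x,e,y)\in\mathscr G_e$ satisfies $xp=yp$ for some $p$ and hence lies in $R_p|_{K\cap U_p}$ for any compact open $K\ni x,y$ and any action-directed $F\ni p$. So $\mathscr G_e$ is approximately quasi-compact, and Theorem~\ref{t:graded.vnr} gives that $R\mathscr G(X,P)$ is graded regular.

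The main obstacle is the closure of $\mathscr G(K,F)$ under multiplication: this is what forces the use of action-directed $F$, so that the common refinement $r\in pP\cap qP$ with $U_r=U_p\cap U_q$ actually lies in $F$, and it requires the small computation with the partial-action axioms showing that the "agreement levels" $p$ and $q$ of the two factors can be merged into a single $r$ while keeping the base points $x,z$ inside $U_r$. The clopenness of the domains $U_p$ plays the separate, essential role of making each piece $R_p|_{K\cap U_p}$ quasi-compact and open; without it these pieces need not even be open subgroupoids.
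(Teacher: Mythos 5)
Your proposal is correct and follows essentially the same route as the paper: reduce via Theorem~\ref{t:graded.vnr} to showing $c^{-1}(e)$ is approximately quasi-compact, and exhaust it by the compact open subgroupoids indexed by a compact open $K\subseteq X$ and a finite action-directed $F\subseteq P$ (your $\mathscr G(K,F)$ is literally the paper's $\mathscr G(W,F)=\bigcup_{p\in F}D(W\cap U_p,p,p,W\cap U_p)$, with the same merging argument $xr=xpa=ypa=yr=yqb=zqb=zr$ for closure under products). The only cosmetic difference is that you justify quasi-compactness by decomposing each piece into basic bisections, where the paper instead cites that the topology on $D(W\cap U_p,p,p,W\cap U_p)$ is the product topology; both are fine.
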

\begin{proof}
Note that $c\inv(e) = \{(x,e,y)\in \mathscr G(X,P)\}$, and hence it is a principal groupoid.  So it suffices by Theorem~\ref{t:graded.vnr} to show that $c\inv(e)$ is approximately quasi-compact.
Let $W$ be a compact open subset of $X$, and let $F$ be an action-directed finite subset of $P$.  Put $\mathscr G(W,F)=\{(x,e,y)\mid x,y\in W, \exists p\in F,xp=yp\}$.  We claim that $\mathscr G(W,F)$ is a compact open subgroupoid of $c\inv(e)$.  First of all,
 \[\mathscr G(W,F) = \bigcup_{p\in F}D(W\cap U_p,p,p,W\cap U_p)\] and hence is compact open, since $W\cap U_p$ is compact open because $U_p$ is clopen, and the topology on $D(W\cap U_p,p,p,W\cap U_p)$ is the product topology on $(W\cap U_p)\times \{e\}\times (W\cap U_p)$ by~\cite[Lemma~5.11]{RenaultWilliams}.  Indeed, since $X$ is Hausdorff, the set of elements of $U_p\times U_p$ with $xp=yp$ is closed in $U_p\times U_p$ and hence a compact subset of $(W\cap U_p)\times (W\cap U_p)$ as $U_p$ is clopen.  It remains to check that $\mathscr G(W,F)$ is a subgroupoid. It is clearly closed under inversion.  But if $(x,e,y),(y,e,z)\in \mathscr G(W,F)$, then we can find $p,q\in F$ with $xp=yp$ and $yq=zq$.  Since $F$ is action directed, we can find $r\in F\cap pP\cap qP$ with $U_r=U_p\cap U_q$.  Then $y\in U_r$.  Let $r=pa=qb$.  Then $yr$ defined implies that $a$ is defined on $yp=xp$ and $b$ is defined on $yq=zq$.  Therefore, $xr=xpa$ and $zr=zqb$ are defined and $xr=xpa=ypa=yr=yqb=zqb=zr$.  Thus $(x,e,z)\in \mathscr G(W,F)$.  We conclude that $\mathscr G(W,F)$ is a compact open subgroupoid. If $F,F'$ are action-directed finite subsets of $P$ and $W,W'$ are compact open subsets of $X$, then there is an action-directed finite subset $F''$ with $F\cup F'\subseteq F''$ by~\cite[Claim~3]{RenaultWilliams}.  Then $\mathscr G(W,F),\mathscr G(W',F')\subseteq \mathscr G(W\cup W',F'')$ by construction.  If $(x,e,y)\in c\inv (e)$, then we can find a compact open set $W$ containing $x,y$ and we can find $p\in P$ with $xp=yp$.  Clearly, $\{p\}$ is action directed.  It follows that $c\inv (e)$ is the directed union of the compact open subgroupoids $\mathscr G(W,F)$ and hence is approximately quasi-compact.  The result follows.
\end{proof}

\subsection{Self-similar group actions on graphs and Nekrashevych-Exel-Pardo algebras}
Nekrashevych introduced algebras associated to self-sim\-i\-lar groups~\cite{Nekcstar,Nekrashevychgpd} and these were generalized by Exel-Pardo~\cite{ExelPardoSelf} to self-similar actions on graphs.  To keep things simple, we will work with finite directed  graphs with no sinks.

Let $E$ be a finite directed graph with no sinks. Then $\partial E=E^{\infty}$ is compact Hausdorff and totally disconnected.   If $G$ is a discrete group, then a self-similar action of $G$ on $E$ consists of an action of $G$ on $E$ by digraph automorphisms and a mapping $G\times E^1\to G$, written $(g,e)\mapsto g|_e$ and called the \emph{section} of $g$ at $e$, satisfying:
\begin{enumerate}
\item $g(v)=g|_e(v)$ for all $v\in E^0$ and $e\in E^1$;
\item $(gh)|_e = g|_{h(e)}h|_e$.
\end{enumerate}
One easily checks that $G$ acts on $E^*$ via the recursive formula $g(ex) = g(e)g|_e(x)$ for $ex\in E^*$ and with $e\in E^1$.  This product makes sense because of the first axiom and it is an action by the second.  It follows that the action of $G$ on $E^*$ induces an action of $G$ on $\partial E=E^\infty$.  The action of $g\in G$ on an infinite path can be computed recursively via the formula $g(ex) =g(e)g|_e(x)$ for $ex\in E^\infty$ and $e\in E^1$.  Note that this action preserves the length of the longest common prefix of two infinite paths and hence $G$ acts on $\partial E$ by homeomorphisms.  We say that the self-similar group action is faithful if the action of $G$ on $\partial E$ is faithful.

The classical notion of self-similar groups~\cite{selfsimilar} concerns the special case when $E$ has a single vertex and at least two edges.  The following notion is standard for self-similar group actions for single-vertex graphs and we define it now in the general case.  The mapping $G\times E^1\to G$ given by $(g,e)\mapsto g|_e$ extends to an action of the free monoid on $E^1$ on the set $G$ by mappings.  We say that $g\in G$ is \emph{finite state} if the forward orbit of $g$ under this action is finite.  We say that $G$ is \emph{finite state} if each element of $G$ is finite state.  It is easy to check (and well known for the case when $E$ is a single-vertex graph~\cite{selfsimilar}) that if $X\subseteq G$ is any subset closed under taking sections (i.e., under the action of the free monoid), then the subgroup $H=\langle X\rangle$ acting on $E$ is self-similar, that is, $h|_e\in H$ for all $h\in H$ and $e\in E^1$. Moreover, if $|X|<\infty$, then each element of $H$ is finite state.   Finitely generated, finite state self-similar groups are also known as automaton groups.

There is an algebra associated to any self-similar group action, generalizing the Leavitt path algebra~\cite{Nekcstar,ExelPardoSelf}.  It can be given by generators and relations but we give the groupoid description.  Exel and Pardo~\cite{ExelPardoSelf} construct an inverse semigroup $S_{G,E}$ and then use a groupoid of germs construction to build the groupoid.  The inverse semigroup $S_{G_,E}$ can be defined as the inverse semigroup with zero whose nonzero elements consists of expressions of the form $\alpha g\beta^*$ with $\alpha,\beta\in E^*$ and $r(\alpha)=gr(\beta)$. The product is given by
\[\alpha g\beta^*\cdot \lambda h\nu^*=\begin{cases} \alpha g(\gamma)g|_{\gamma}\nu ^*, & \text{if}\ \lambda=\beta\gamma,\\ \alpha (h\inv|_{\gamma})\inv (\nu h\inv(\gamma))^*, & \text{if}\ \beta=\lambda\gamma,\\ 0, & \text{else.}\end{cases}\] The involution is given by $(pgq^*)^* = qg\inv p^*$.   If $E$ has a single vertex $v$, we can identify the group of units of $S_{G,E}$ with $G$, but otherwise $S_{G,E}$ is not monoid.

There is an action of $\theta\colon S_{G,E)}\to I_{\partial E}$ defined as follows.  The zero acts as the empty map.  If $\alpha g\beta^*\in S_{G,E}$ with $gr(\beta)=r(\alpha)$, the corresponding partial homeomorphism has domain $Z(\beta)$, range $Z(\alpha)$ and $\theta_{\alpha g\beta^*}(\beta \eta) = \alpha g(\eta)$ for $\eta\in Z(r(\beta))$.

The groupoid $\mathscr G_{G,E}$ is the groupoid of germs of the action of $S_{G,E}$ on $\partial E$.  If $G$ is trivial, this recovers the boundary path groupoid $\mathscr G_E$.  The groupoid $\mathscr G_{G,E}$ is not usually Hausdorff.  The \emph{Exel-Pardo algebra} of $(G,E)$ over $K$ is $K\mathscr G_{G,E}$.  When $G$ is trivial, this is just $L_K(E)$.

There is a well-defined continuous cocycle $c\colon \mathscr G_{G,E}\to \mathbb Z$ given by putting $c([\alpha g\beta\inv ,x]) = |\alpha|-|\beta|$ and this induces a $\mathbb Z$-grading on $K\mathscr G_{G,E}$.  The following is an analogue of Theorem~\ref{Leavitt_graded_regular} for Exel-Pardo algebras.

\begin{Thm}
Let $(G,E)$ be a faithful finite state self-similar group action of a group $G$ on a finite directed graph $E$ with no sinks and let $R$ be a unital ring.  Then the Exel-Pardo algebra $K\mathscr G_{G,E}$ is graded regular with respect to its usual $\mathbb Z$-grading if and only if $G$ is locally finite.
\end{Thm}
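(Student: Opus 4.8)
The plan is to apply Theorem~\ref{t:graded.vnr} to the $\mathbb Z$-grading on $K\mathscr G_{G,E}$ induced by $c$. Write $\mathscr G_0 = c^{-1}(0)$ for the clopen subgroupoid consisting of the germs $[\alpha g\beta^*,\xi]$ with $|\alpha| = |\beta|$. By Theorem~\ref{t:graded.vnr}, graded regularity of $K\mathscr G_{G,E}$ is equivalent to $K$ being regular, $\mathscr G_0$ being approximately quasi-compact, and the order of every finite subgroup of an isotropy group of $\mathscr G_0$ being invertible in $K$. The first condition holds since $K$ is a field, and the third is automatic in characteristic $0$; in general the primes dividing the order of a finite subgroup of an isotropy group of $\mathscr G_0$ are precisely the orders of cyclic subgroups of $G$, since an isotropy group of $\mathscr G_0$ at $\xi$ is a directed union of homomorphic images of the stabilizers in $G$ of the suffixes of $\xi$. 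So the heart of the matter is the claim that $\mathscr G_0$ is approximately quasi-compact if and only if $G$ is locally finite.

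For necessity, suppose $K\mathscr G_{G,E}$ is graded regular, so $\mathscr G_0$ is approximately quasi-compact. Since $E^0$ is finite, for $g\in G$ the homeomorphism $\theta_g$ of $\partial E = E^\infty$ is given on each clopen cylinder $Z(v)$ ($v\in E^0$) by the element $\varepsilon_{g(v)}g\varepsilon_v^*$ of $S_{G,E}$, and so its germs form a compact open bisection $(g,\partial E)$ of $\mathscr G_0$; the union $\mathscr H = \bigcup_{g\in G}(g,\partial E)$ is an open subgroupoid of $\mathscr G_0$, namely the groupoid of germs of the action of $G$ on $\partial E$. Being open in an approximately quasi-compact groupoid, $\mathscr H$ is approximately quasi-compact, and the assignment $g\mapsto(g,\partial E)$ is injective because the action is faithful. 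Hence Corollary~\ref{c:gpd.germs}, applied to $G$ regarded as an inverse semigroup acting on the compact open space $\partial E$, shows that $G$ is locally finite.

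For sufficiency, assume $G$ is locally finite. Let $S^{(0)}\subseteq S_{G,E}$ be the inverse subsemigroup consisting of $0$ together with the degree-zero elements $\alpha g\beta^*$, that is, those with $|\alpha| = |\beta|$; it is closed under products and involution, the natural homomorphism $\psi\colon S_{G,E}\to\Gamma_c(\mathscr G_{G,E})$ carries $S^{(0)}$ into $\Gamma_c(\mathscr G_0)$, and $\mathscr G_0 = \bigcup_{s\in S^{(0)}}\psi(s)$. By Proposition~\ref{p:uniformly.bdd.vs.local.finite} it therefore suffices to show $S^{(0)}$ is locally finite. Fix finitely many generators $\alpha_ig_i\beta_i^*$ of a subsemigroup and put $N = \max_i|\alpha_i|$. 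Since each $g_i$ is finite state, the set $X$ of all sections $g_i|_\gamma$ ($\gamma\in E^*$) is finite, and $X$ is closed under taking sections, so $H = \langle X\rangle$ is a finitely generated self-similar subgroup of $G$, hence finite. Inspecting the multiplication of $S_{G,E}$ on degree-zero elements, a product of two degree-zero elements whose path components have length at most $N$ and whose group component lies in $H$ is again of this kind: the path lengths never exceed the larger of the two, and the new group component is obtained from elements of $H$ by forming a section and a product, hence lies in $H$. As $E$ is finite there are only finitely many paths of length at most $N$, so the generated subsemigroup is finite. Thus $S^{(0)}$ is locally finite, $\mathscr G_0$ is approximately quasi-compact, and Theorem~\ref{t:graded.vnr} completes the proof.

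The step I expect to be the main obstacle is the local finiteness of $S^{(0)}$: the careful bookkeeping in the semigroup multiplication confirming that, in the degree-zero part, path lengths do not grow beyond $N$ and the group components never leave the finite group $H$, where one uses that a subset of $G$ closed under sections generates a self-similar (here, finite) subgroup. Identifying the isotropy groups of $\mathscr G_0$ with directed unions of quotients of point stabilizers in $G$, needed for the invertibility condition, also requires attention to the germ relation.
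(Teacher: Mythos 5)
Your proof follows essentially the same route as the paper's: necessity via the injective homomorphism $G\to\Gamma_c\bigl((\mathscr G_{G,E})_0\bigr)$ coming from faithfulness, combined with local finiteness of $\Gamma_c$ of an approximately quasi-compact groupoid (Proposition~\ref{p:uniformly.bdd.vs.local.finite}); and sufficiency by reducing to local finiteness of the degree-zero inverse subsemigroup of $S_{G,E}$, using exactly the paper's two ingredients, namely that finite-state plus locally finite lets one pass to a finite section-closed subgroup, and that degree-zero products neither increase the maximal path length nor leave that subgroup. The only point to flag is your aside on condition (3) of Theorem~\ref{t:graded.vnr}: you observe that the isotropy of $\mathscr G_0$ is built from (images of) point stabilizers in $G$ but never actually conclude that the relevant orders are invertible in the coefficient ring --- the paper's own proof omits this step entirely as well, so it is not a divergence, but since $\mathscr G_0$ can have nontrivial finite isotropy (germs of a finite-order $g$ at a fixed point where the sections of $g$ stay nontrivial), the condition is not vacuous and the statement as written really does need either characteristic-zero-type hypotheses or this extra verification.
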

\begin{proof}
Suppose first that $K\mathscr G_{G,E}$ is graded regular.  There is a homomorphism $\psi\colon G\to \Gamma_c((\mathscr G_{G,E})_0)$ given by $\psi(g) = \{[g(v)gv^*,\nu]\mid v\in Z(v), v\in E^0\}$.  Moreover, $\psi$ is injective because the action is faithful, and so if $g\neq 1$, then $g(x)\neq x$ for some $x\in \partial E$, whence $\ran([g(s(x))gs(x)^*,x])=gx\neq x=\sour([g(s(x))gs(x)^*,x])$.  But $\psi(1) = G\skel 0$.  So $\psi(g)\neq \psi(1)$.  By Theorem~A, $\Gamma_c((\mathscr G_{G,E})_0)$ is locally finite and hence $G\cong \psi(G)$ is locally finite.

For the converse, let $(S_{G,E})_0$ be the inverse subsemigroup of $S_{G,E}$ consisting of $0$ and all elements of the form $\alpha g\beta^*$ with $r(\alpha)=r(\beta)$ and $|\alpha|=|\beta|$.     Then $(S_{G,E})_0$ contains all the idempotents of $S_{G,E}$, from which it follows that $(\mathscr G_{G,E})_0$ is the groupoid of germs of the action of $(S_{G,E})_0$ on $\partial E$.  Therefore, by Corollary~\ref{c:gpd.germs} and Theorem~\ref{t:graded.vnr} it suffices to show that $(S_{G,E})_0$ is locally finite.  Since $G$ is finite state, if $X\subseteq G$ is finite then replacing $X$ by its forward orbit under the action of the free monoid on $E$, we see that $G$ is the direct limit of subgroups generated by finite subsets $Y$ of $G$ closed under sections. Since $G$ is locally finite, such a finite subset $Y$ generates a finite self-similar subgroup of $G$.  Thus we are reduced to showing that $(S_{G,E})_0$ is locally finite when $G$ is finite.  If $F$ is any finite subgraph of $E$, then since $G$ is finite, $G\cdot F$ is a finite $G$-invariant subgraph of $E$ containing $F$.  Thus $(S_{G,E})_0$ is the direct limit of the $(S_{G,F})_0$ with $F$ running over finite $G$-invariant subgraphs of $E$.  This reduces us to the case that $E$ is finite.    But if $G$ and $E$ are finite, then we can write $(S_{G,E})_0$ as the direct limit of the finite inverse subsemigroups $S_n$ with $n\geq 0$, where $S_n$ consists of $0$ and all elements $\alpha g\beta^*$ with $r(\alpha)=r(\beta)$, $|\alpha|=|\beta|\leq n$ and $g\in G$.  Thus $(S_{G,E})_0$ is locally finite when $G$ and $E$ are finite.  This completes the proof.
\end{proof}

Most famous examples of self-similar groups are finite state (like the Grigorchuk group) but few of the commonly studied ones are locally finite.  Of course, when $G$ is trivial we recover Hazrat's theorem on graded regularity of Leavitt path algebras in characteristic $0$~\cite{Hazrat14} for the special case of finite graphs with no sinks.

\section*{Appendix: A groupoid-free proof of Theorem~B}
In this appendix we give a groupoid-free proof of Theorem~B.

Let $S$ be an inverse semigroup, and let $R$ be a unital ring.  Then $S$ acts on the left of $E(S)$ via the rule $s\mathrel{\diamond} e = ses^*$.
The $\mathscr R$-class of $e\in E(S)$ is the set $R_e=\{s\in S\mid ss^*=e\}$.  Dually the $\mathscr L$-class of $e$ is $L_e=\{s\in S\mid s^*s=e\}$.   The maximal subgroup of $S$ at an idempotent $e$ is the group of units $G_e=R_e\cap L_e$ of the inverse monoid $eSe$ (with identity $e$).  Note that $G_e$ acts freely on the left of $R_e$ by multiplication since if $g\in G_e$ and $s\in R_e$, then we have $gss^*g^* = geg^*=e$, and $s=gs$ implies $e=ss^*=gss^*=ge=g$.

\begin{Lemma}\label{lemma:birkhoff}
Let $S$ be an semigroup and suppose that there is an integer $N>0$ with $|L_e|\leq N$ for all $e\in E(S)$.  Then $S$ is locally finite.
\end{Lemma}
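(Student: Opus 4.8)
The plan is to embed $S$ into a direct product of finite inverse semigroups of uniformly bounded order and then quote Birkhoff's theorem (Theorem~\ref{t:Birkhoff}); this is the semigroup-theoretic shadow of the argument proving $(2)\Rightarrow(3)$ in Proposition~\ref{p:uniformly.bdd.vs.local.finite}.

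The first step is to recall the Wagner--Preston representation of $S$ on (the underlying set of) $S$. For $s\in S$ let $\lambda_s$ be the partial bijection of $S$ with domain $\{x\in S: xx^*\le s^*s\}$ defined by $\lambda_s(x)=sx$. A routine check shows that $\lambda_s$ maps its domain bijectively onto $\{y\in S: yy^*\le ss^*\}$ with partial inverse $\lambda_{s^*}$, that $s\mapsto\lambda_s$ is a homomorphism into the symmetric inverse monoid $\mathcal I_S$, and that it is injective: if $\lambda_s=\lambda_t$, comparing domains forces $s^*s=t^*t=:e$, and then $s=se=\lambda_s(e)=\lambda_t(e)=te=t$.

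The crucial point is that the orbits of the resulting action of $S$ on $S$ are uniformly small. Whenever $\lambda_s(x)$ is defined one computes $(\lambda_s(x))^*\lambda_s(x)=x^*s^*sx=x^*(xx^*)x=x^*x$, using $xx^*\le s^*s$; hence every point of the orbit of $x$ has the same domain idempotent $x^*x$ and so lies in $L_{x^*x}$, a set of cardinality at most $N$ by hypothesis. Since the orbits partition $S$ and each $\lambda_s$ preserves this partition (it carries $x$ into the orbit of $x$), restricting to an orbit $\mathcal O$ yields a homomorphism $\pi_{\mathcal O}\colon S\to\mathcal I_{\mathcal O}$, and $|\mathcal I_{\mathcal O}|\le|\mathcal I_N|$, where $\mathcal I_N$ denotes the symmetric inverse monoid on an $N$-element set. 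The product $\prod_{\mathcal O}\pi_{\mathcal O}\colon S\to\prod_{\mathcal O}\mathcal I_{\mathcal O}$ is injective, since $\lambda_s$ is recovered from its restrictions to the orbits and $s\mapsto\lambda_s$ is faithful.

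This exhibits $S$ as a subsemigroup of a direct product of finite semigroups of order at most $|\mathcal I_N|$, so Theorem~\ref{t:Birkhoff} shows that $S$ is locally finite. The only mildly fussy steps are the bookkeeping with the domains in the Wagner--Preston representation and the observation that the restriction of a partial-bijection action to an invariant subset is again a homomorphism; neither of these presents a genuine obstacle.
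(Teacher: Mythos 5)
Your proof is correct and is essentially the paper's argument in different clothing: the Wagner--Preston representation restricted to an orbit is precisely the left Sch\"utzenberger representation on the corresponding $\mathscr L$-class (your computation $(sx)^*(sx)=x^*x$ shows the orbits are the $\mathscr L$-classes), so both proofs embed $S$ into a product of symmetric inverse monoids on at most $N$ points and invoke Theorem~\ref{t:Birkhoff}. The faithfulness check and the orbit bookkeeping are all in order.
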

\begin{proof}
The semigroup $S$ acts on the left of $L_e$  by partial injective mappings via the left Sch\"utzenberger representation:
\[s\cdot t = \begin{cases} st,  & \text{if}\ st\in L_e\\ \text{undefined}, & \text{else}\end{cases}\] for $s\in S$ and $t\in L_e$.   Moreover, the Sch\"utzenberger representations separate points~\cite{CP}.  That is, if $s_1,s_2\in S$ act the same in each Sch\"utzenberger representation, then $s_1=s_1\cdot s_1^*s_1 = s_2\cdot s_1^*s_1$, and so $s_1\leq s_2$.  Dually, we have $s_2\leq s_1$, and so they are equal.  It follows that the points of $S$ can be separated by homomorphisms into the symmetric inverse monoid $I_N$ of all partial bijections on an $N$-element set. Thus $S$ is locally finite by ~\ref{t:Birkhoff}.
\end{proof}

\begin{proof}[Proof of Theorem~B]
The sufficiency of these conditions was established in~\cite{Weissglass} and was proved in Proposition~\ref{p:inverse.semigroup.easy}.

Suppose next that $RS$ is regular.  The augmentation $\varepsilon\colon RS\to R$ given by on the $R$-basis $S$ by $s\mapsto 1$ is a surjective ring homomorphism, and hence $R$ is regular (cf.~\cite[Lemma~1.3]{Goodearlreg}).  Next observe that if $M=S^1$ is the inverse monoid resulting from adjoining an identity to $S$, then $RS$ is an ideal of $RM$ and we have that $R\cong RM/RS$.  Therefore $RM$ is regular by~\cite[Lemma~1.3]{Goodearlreg} as $R$ and $RS$ are regular.  Thus we may assume without loss of generality that $S$ is a monoid.

Suppose that $X$ is a finite subset of $S$ closed under inversion $x\mapsto x^*$.  We show that $T=\langle X\rangle$ is finite and the order of each maximal subgroup of $T$ is invertible in $R$.  Since every element of a maximal subgroup of $S$ belongs to some finitely generated inverse subsemigroup, it will follow that the order of each element of a maximal subgroup of $S$ is invertible in $R$.

There is a left module homomorphism $\pi\colon RS\to RE(S)$ given by $\pi(s) = ss^*$ for $s\in S$.  Indeed, $\pi(ts) = tss^*t^* = t\mathrel{\diamond} (ss^*) = t\mathrel{\diamond} \pi(s)$ for $s,t\in S$.  Evidently,  $\ker \pi$ is spanned over $R$ by the elements of the form $s-ss^*$ with $s\in S$.  Let $L\subseteq \ker \pi$ be the left ideal generated by the elements $t-tt^*$ with $t\in T$.  We claim that $L$ is generated by the finite set $Y=\{x-xx^*\mid x\in X\}$.  Indeed, we prove by induction on word length that if $t\in T$, then $t-tt^*\in RS\cdot Y$.  There is nothing to prove if $t\in X$.  Assume that $t=t_0x$ with $x\in X$ and $t_0$ having shorter word length.  Since $t_0^*$ has the same word length as $t_0$, because $X$ is closed under inversion, it follows by induction that $x-xx^*$ and $t_0^*-t_0^*t_0$ are in $RS\cdot Y$.  Therefore,
\[t-tt^* = t_0(x-xx^*) - t_0xx^*(t_0^*-t_0^*t_0)\in RS\cdot Y\] as required.  Since $RS$ is regular, it follows from~\cite[Theorem~1.1]{Goodearlreg} that there is an idempotent $e\in RS$ with $L=RSe$.  Let $f=1-e$ and note that $Lf = RSe(1-e)=0$.  Therefore,  $\pi(f)=\pi(1)-\pi(e) = 1$ (as $e\in L\subseteq \ker \pi$), and also $tf=tt^*f$ for all $t\in T$ (as $(t-tt^*)f=0$).  Let $f=\sum_{i=1}^Nr_is_i$ with $r_i\neq 0$ for all $i$.
We claim that $|L_x\cap T|\leq N$ for all $x\in E(T)$.  It will then follow that $T$ is finite by Lemma~\ref{lemma:birkhoff}.

Define\footnote{Note that $\wh f$ is essentially the image of $f$ under the isomorphism of $RS$ with $R\mathscr G_S$, cf.~\cite{mygroupoidalgebra}} $\wh f\colon S\to R$ by letting \[\wh f(s)=\sum_{s\leq s_i}r_i.\]  Notice that since $s\leq s_i$ if and only if $s=ss^*s_i$, if and only if $s=s_is^*s$, we deduce that $\wh f(s)$ is  coefficient of $s$ in $ss^*f$ and also the coefficient of $s$ in $fs^*s$.   If $x\in E(S)$, then there are at most $N$ elements in the support of $fx$, and so $\wh f$ has at most $N$ elements of $L_x$ in its support.

We claim that if $t\in T\cap L_{ss^*}$, then $\wh f(s) = \wh f(ts)$.  Indeed, $\wh f(s)$ is the coefficient of $s$ in $ss^*f$ and $\wh f(ts)$ is the coefficients of $ts$ in $tss^*t^*f=tss^*t^*tf=tss^*f$ because $t^*\in T$ implies $t^*f=t^*tf$.  But since $t^*t=ss^*$, left multiplication by $t$ is bijective mapping $ss^*S=t^*tS\to tt^*S=tS$ (with inverse left multiplication by $t^*$).  Therefore, the coefficient of $ts$  in $tss^*f$ is the same as the coefficient of $s$ in $ss^*f$, and so $\wh f(ts)=\wh f(s)$.

We may now complete the proof. Let $x\in E(T)$.  From
\begin{equation}\label{eq:augment2}
\sum_{i=1}^Nr_ixs_is_i^*x^*= \pi(xf) = x\mathrel{\diamond} \pi(f) = x\pi(f)x^* = x1x^*=x
\end{equation}
we deduce that there is $s$ in the support of $xf$ with $ss^*=x$.  Therefore, $\wh f(s)\neq 0$.  Now if $t\in L_x\cap T$, then $\wh f(ts)=\wh f(s)\neq 0$ by the above.  Since if $t\in L_x$, we have that $(ts)^*(ts) = s^*t^*ts=s^*ss^*s=s^*s$, all the elements $ts$ with $t\in L_x\cap T$ are in the support of $\wh f$, and so there are at most $N$ such elements.   On the other hand, if $t_1,t_2\in L_x$ and $t_1s=t_2s$, then $t_1=t_1x=t_1ss^*=t_2ss^*=t_2x=t_2$.  It follows that $L_x\cap T$ has at most $N$ elements.

Finally, we show that if $H_x=G_x\cap T\subseteq L_x\cap T$ is the maximal subgroup of $T$ at $x$, then $|H_x|$ is invertible in $R$.  Indeed, $H_x$ acts freely on the left of $R_x$.  Let $A$ be a transversal for $H_x\backslash R_x$.  Note that \eqref{eq:augment2} implies that
\[\sum_{s\in R_x} \wh f(s)=\sum_{xs_is_i^*x^*=x}r_i =1.\]  On the other hand, if $s\in R_x$, we already observed that $\wh f(ts)=\wh f(s)$ for all $t\in L_x\cap T\supseteq H_x$.  Since $H_x$ acts freely on the left of $R_x$, it follows that \[1=\sum_{s\in R_x}\wh f(s) = \sum_{a\in A}\sum_{h\in H_x}f(ha)=|H_x|\sum_{a\in A}f(a),\] and so $|H_x|$ is invertible in $R$.  This completes the proof.
\end{proof}

\def\malce{\mathbin{\hbox{$\bigcirc$\rlap{\kern-7.75pt\raise0,50pt\hbox{${\tt
  m}$}}}}}\def\cprime{$'$} \def\cprime{$'$} \def\cprime{$'$} \def\cprime{$'$}
  \def\cprime{$'$} \def\cprime{$'$} \def\cprime{$'$} \def\cprime{$'$}
  \def\cprime{$'$} \def\cprime{$'$}
  \def\malce{\mathbin{\hbox{$\bigcirc$\rlap{\kern-7.75pt\raise0,50pt\hbox{${\tt
  m}$}}}}}\def\cprime{$'$} \def\cprime{$'$} \def\cprime{$'$} \def\cprime{$'$}
  \def\cprime{$'$} \def\cprime{$'$} \def\cprime{$'$} \def\cprime{$'$}
  \def\cprime{$'$} \def\cprime{$'$}


\end{document}